\newcommand{\xl}[1]{\textcolor{magenta}{XL: #1}}
\newcommand{\op}[1]{\textcolor{blue}{OP: #1}}
\newcommand{\lb}[1]{\textcolor{brown}{LB: #1}}
\pgfplotsset{compat=1.18}
\newcommand{\Mod}[1]{\ (\mathrm{mod}\ #1)}
\definecolor{uuuuuu}{rgb}{0.27,0.27,0.27}
\definecolor{sqsqsq}{rgb}{0.1255,0.1255,0.1255}
\newtheorem{definition}{Definition} [section]
\newtheorem{theorem}[definition]{Theorem}
\newtheorem{lemma}[definition]{Lemma}
\newtheorem{proposition}[definition]{Proposition}
\newtheorem{claim}[definition]{Claim}
\newtheorem{fact}[definition]{Fact}
\theoremstyle{remark}
\newtheorem{remark}[definition]{Remark}
\newcommand{\C}[1]{\mathcal{#1}}
\newcommand{\I}[1]{{\mathbbm #1}}
\newcommand{\Qn}[1]{\textbf{#1}}
\newcommand{\eval}[1]{[\![#1]\!]} 
\newcommand{\ra}[1]{\cite[#1]{Razborov10}}
\newcommand{\e}{\varepsilon}
\newcommand{\hide}[1]{}
\begin{document}
%%%%%%%%%%%%%%%%%%%%%%%%%%%%%%%%%%%%%%%%%%%%%%%%%%%%%%%
\title{\bf\Large The Tur\'{a}n density of the tight $5$-cycle minus one edge}
\author{Levente Bodn\'ar}
\author{Jared Le\'on}
\author{Xizhi Liu}
\author{Oleg Pikhurko}
\affil{Mathematics Institute and DIMAP,
             University of Warwick,
             Coventry, 
             %CV4 7AL, 
             UK
}
\date{\today}
%%%%%%%%%%%%%%%%%%%%%%%%%%%%%%%%%%%%%%%%%%%%%%%%%%%%
%%%%%%%%%%%%%%%%%%%%%%%%%%%%%%%%%%%%%%%%%%%%%%%%%%%
\maketitle
%\footnote{footnote}
%%%%%%%%%%%%%%%%%%%%%%%%%%%%%%%%%%%%%%%%%%%%%%%%%
%%%%%%%%%%%%%%%%%%%%%%%%%%%
\begin{abstract}
Let the \textbf{tight $\ell$-cycle minus one edge} $C_\ell^{3-}$ be the $3$-graph on $\{1,\dots,\ell\}$ consisting of $\ell-1$ consecutive triples in the cyclic order. 
We show that, for every $\ell\ge 5$ not divisible by $3$, the  Tur\'{a}n density of $C_{\ell}^{3-}$ is $1/4$ and also prove some finer structure results. This proves a conjecture of Mubayi--Sudakov--Pikhurko from 2011 and extends the results of Balogh--Luo [\emph{Combinatorica} 44 (2024) 949–976] who established analogous claims for all sufficiently large~$\ell$.

Results similar to ours were independently obtained by Lidick\'y--Mattes--Pfender [arXiv:2409.14257].
\end{abstract}
%\medskip
% \textbf{Keywords:} Hypergraphs, Tur\'{a}n problem, tight five-cycle minus one edge
% \medskip
% \textbf{MSC2020:} 	05C65, 05C35, 05D99. 
%Find suitable code from https://mathscinet.ams.org/msc/msc2010.html

%%%%%%%%%%%%%%%%%%%%%%%%%%%%%%%%%%%%%%%%%%%%%%%%%%%%%%
\section{Introduction}\label{SEC:Intorduction}
% \subsection{Motivation}

Given an integer $r\ge 2$, an \textbf{$r$-uniform hypergraph} (henceforth an \textbf{$r$-graph}) $\mathcal{H}$ is a collection of $r$-subsets of some set $V$. We call $V$ the \textbf{vertex set} of $\C H$ and denote it by $V(\C H)$. When $V$ is understood, we usually identify a hypergraph $\mathcal{H}$ with its set of edges. %$V(\mathcal{H})$ to denote its vertex set. 
%The \textbf{order} $v(\C H):=|V(\C H)|$ is the number of vertices of~$\C H$.

Given a family $\mathcal{F}$ of $r$-graphs, we say an $r$-graph $\mathcal{H}$ is \textbf{$\mathcal{F}$-free}
if it does not contain any member of $\mathcal{F}$ as a subgraph.
The \textbf{Tur\'{a}n number} $\mathrm{ex}(n, \mathcal{F})$ of $\mathcal{F}$ is the maximum number of edges in an $\mathcal{F}$-free $r$-graph on $n$ vertices. 
The \textbf{Tur\'{a}n density} of $\mathcal{F}$ is defined as $\pi(\mathcal{F})\coloneq \lim_{n\to\infty}\mathrm{ex}(n,\mathcal{F})/{n\choose r}$. 
The existence of this limit follows from a simple averaging argument of Katona--Nemetz--Simonovits~\cite{KNS64}, which shows that $\mathrm{ex}(n,\mathcal{F})/{n\choose r}$ is non-increasing in $n$.

For $r=2$, the value $\pi(\mathcal{F})$ is well understood thanks to the classical work of Erd\H{o}s--Stone~\cite{ES46} (see also~\cite{ES66}), which extends Tur\'{a}n's seminal theorem from~\cite{Tur41}.
For $r \ge 3$, determining $\pi(\mathcal{F})$ is notoriously difficult in general, despite significant effort devoted to this area. 
For results up to~2011, we refer the reader to the excellent survey by Keevash~\cite{Kee11}.

In this paper, we focus on the Tur\'{a}n density of $3$-uniform tight cycles minus one edge. 
For an integer $\ell \ge 4$, the \textbf{tight $\ell$-cycle} $C_{\ell}^{3}$ is the $3$-graph on $[\ell]\coloneqq \{1,\dots,\ell\}$ with edge set 
\begin{align*}
    \big\{\,\{1,2,3\}, \{2,3,4\}, \cdots, \{\ell-2,\ell-1,\ell\},\{\ell-1,\ell,1\},\{\ell,1,2\}\,\big\},
\end{align*}
 that is, we take all consecutive triples in the cyclic order on $[\ell]$.
The \textbf{tight $\ell$-cycle minus one edge} $C_{\ell}^{3-}$ is the $3$-graph on $[\ell]$ with edge set 
\begin{align*}
    \big\{\,\{1,2,3\}, \{2,3,4\}, \cdots, \{\ell-2,\ell-1,\ell\},\{\ell-1,\ell,1\}\,\big\}, 
\end{align*}
 that is, $C_{\ell}^{3-}$ is obtained from  $C_{\ell}^{3}$ by removing one edge. 
%
% Let $C_{5}^{-}$ denote the $3$-graph on $\{1,2,3,4,5\}$ with edge set 
% \begin{align*}
%     \left\{\{1,2,3\}, \cdots, \{\ell-2,\ell-1,\ell\},\{\ell-1,\ell,1\}\right\}.
% \end{align*}
%

If $\ell \equiv 0 \Mod{3}$ (i.e.\ $\ell$ is divisible by $3$) then  both $C_{\ell}^{3}$ and $C_{\ell}^{3-}$ are $3$-partite and thus it holds that $\pi(C_{\ell}^{3}) = \pi(C_{\ell}^{3-})=0$ by the classical general result of Erd\H{o}s~\cite{Erd64KST}.
%Note that both $C_{\ell}^{3}$ and $C_{\ell}^{3-}$ are $3$-partite when $\ell \equiv 0 \Mod{3}$. 
%Hence, by a classical theorem of Erd\H{o}s~\cite{Erd64KST}, it holds that $\pi(C_{\ell}^{3}) = \pi(C_{\ell}^{3-})=0$ for all positive integers $\ell$ satisfying $\ell \equiv 0 \Mod{3}$. 
Very recently, using a sophisticated stability argument combined with results from digraph theory, Kam{\v c}ev--Letzter--Pokrovskiy~\cite{KLP24} proved that $\pi(C_{\ell}^{3}) = 2\sqrt{3}-3$ for all sufficiently large $\ell$ satisfying $\ell \not\equiv 0 \Mod{3}$. 
Later, being partially inspired by~\cite{KLP24}, Balogh--Luo~\cite{BL24C5Minus} proved that $\pi(C_{\ell}^{3-}) = \frac{1}{4}$ for all sufficiently large $\ell$ satisfying $\ell \not\equiv 0 \Mod{3}$. 

Recall that the lower bound $\pi(C_{\ell}^{3-})\ge \frac14$ for each $\ell\ge 5$ not divisible by 3 is provided by the following construction from {\cite{MPS11}}. 
For $n \in \{0, 1,2\}$, the $n$-vertex $T_{\mathrm{rec}}$-construction is the empty $3$-graph on $n$ vertices. 
For $n \ge 3$, an $n$-vertex $3$-graph $\mathcal{H}$ is a \textbf{$T_{\mathrm{rec}}$-construction} if there exists a partition $V_1 \cup V_2 \cup V_3 = V(\mathcal{H})$ into non-empty parts such that $\mathcal{H}$ is obtained from $\mathcal{K}[V_1,V_2,V_3]$, the complete $3$-partite $3$-graph with parts $V_1, V_2, V_3$, by adding a copy of $T_{\mathrm{rec}}$-construction into each $V_i$ for $i \in [3]$. It is easy to see that the obtained $3$-graph is $C_{\ell}^{3-}$-free for every $\ell \not\equiv 0 \Mod{3}$, in particular, for $\ell=5$.
Let $t_{\mathrm{rec}}(n)$ denote the maximum number of edges in an $n$-vertex $T_{\mathrm{rec}}$-construction. 
It is clear from the definition that, for each $n\ge 3$,
\begin{align*}
    t_{\mathrm{rec}}(n)
    = \max\big\{ \ & n_1 n_2 n_3 + t_{\mathrm{rec}}(n_1) + t_{\mathrm{rec}}(n_2) + t_{\mathrm{rec}}(n_3): \\
    %\mathrm{s.t.}\quad  
    &  n_1 + n_2 + n_3 = n \quad\text{and}\quad
      n_i \ge 1~\text{for}~i \in [3]\ \big\}. 
    %\max\left\{n_1 n_2 n_3 + T_{\mathrm{rec}}(n_1) + T_{\mathrm{rec}}(n_2) + T_{\mathrm{rec}}(n_3) \colon n_1 + n_2 + n_3 = n \text{ and } n_i \ge 1 \text{ for } i \in [3]\right\}, 
\end{align*}
A simple calculation  shows that $t_{\mathrm{rec}}(n)= (\frac{1}{4}+o(1)) \binom{n}{3}$ as $n\to\infty$. 

In this paper, we determine the Tur\'an density of the tight 5-cycle minus one edge, thus confirming a conjecture of Mubayi--Pikhurko--Sudakov~{\cite[Conjecture~8]{MPS11}}. 
\begin{theorem}\label{THM:turan-density-C5-} 
    It holds that $\pi(C_{5}^{3-}) = \frac{1}{4}$.
    % Consequently, $\pi(C_{\ell}^{3-}) =  \frac{1}{4}$ for all $\ell$ satisfying $\ell \not\equiv 0 \Mod{3}$.
\end{theorem}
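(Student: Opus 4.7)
The lower bound $\pi(C_5^{3-}) \ge 1/4$ is provided by the $T_{\mathrm{rec}}$-construction described above, so my focus is on the matching upper bound $\pi(C_5^{3-}) \le 1/4$. My plan is to employ Razborov's flag algebra method, guided by the structure of the extremal construction.

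First I would enumerate all $C_5^{3-}$-free $3$-graphs on $N$ labelled vertices for some moderate value of $N$ (the natural choices are $N=7$ or $N=8$). The enumeration can be pruned by simple local forbidden configurations; for instance, if $\{a,b,c\}$, $\{b,c,d\}$, $\{c,d,e\}$ are edges with $a,b,c,d,e$ distinct, then $\{d,e,a\}$ cannot be an edge. With the list of admissible $N$-vertex flags in hand, I would set up a semidefinite program whose feasibility at value $1/4$ certifies the desired upper bound, of the standard form expressing $\tfrac{1}{4} - \mathrm{edge\ density}$ as a conical combination of squared flag products averaged over types $\sigma$.

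The main obstacle is the rounding step: converting the numerical SDP solution into a rational positive semidefinite certificate. Because the $T_{\mathrm{rec}}$-construction is iteratively defined, its limit object attains many flag identities simultaneously, so the dual SDP optimum lies in a low-dimensional face of the semidefinite cone and a naive rounding will violate positive semidefiniteness. The standard remedy is to (i) compute symbolically the set of ``tight'' flags, namely those appearing with positive asymptotic density in $T_{\mathrm{rec}}$-constructions, (ii) project the SDP onto the orthogonal complement of the corresponding equality constraints, and (iii) round inside this projected subspace. The rounded matrices are then verified to constitute a valid certificate by a rational arithmetic computation.

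The ``finer structure'' results alluded to in the abstract should follow from the density bound by a standard stability-plus-removal-lemma template: weak stability typically falls out of the flag algebra certificate, and combined with the $3$-graph removal lemma it yields that every $C_5^{3-}$-free $3$-graph with $(1/4 - o(1))\binom{n}{3}$ edges is $o(n^3)$-close in edit distance to some $T_{\mathrm{rec}}$-construction. Extending from $\ell=5$ to all $\ell \ge 5$ not divisible by $3$ is not automatic, but it should reduce to a uniform SDP argument whose type lists grow gracefully with $\ell$, supplemented by an inductive step that peels off pendant triples of any longer tight-cycle-minus-edge until only a $C_5^{3-}$-finding subproblem remains.
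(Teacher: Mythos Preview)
Your proposed direct flag-algebra attack does not work as stated. The paper's own Proposition~\ref{pr:1} records exactly this computation with $N=7$: the plain SDP in the theory of $\{C_5^{3-},K_4^{3-}\}$-free $3$-graphs gives only $\pi(C_5^{3-})\le \AlphaThreeOneRat\approx 0.25074$, not $1/4$. There is no indication that $N=8$ closes the gap, and the recursive nature of $T_{\mathrm{rec}}$ (every part ratio near $(1/3,1/3,1/3)$ at every scale yields an extremal limit) makes the rounding obstruction you identify essentially fatal for a single-shot SDP at any manageable $N$.

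The paper instead proves the bound \emph{indirectly}. Flag algebras are used only for three intermediate claims: (i) the crude bound above; (ii) any near-extremal $\{C_5^{3-},K_4^{3-}\}$-free $\mathcal{H}$ has a $3$-partition capturing at least $\approx 0.194\,\tfrac{n^3}{6}$ transversal edges (Proposition~\ref{pr:2}); and (iii) for any \emph{locally maximal} such partition, the bad edges satisfy $|B|\le \tfrac{99}{100}|M|$ (Proposition~\ref{pr:3}). The last two are proved in the coloured theory of $3$-graphs with an (unordered) vertex $3$-partition, with the local-maximality inequalities inserted by hand. These combine into Lemma~\ref{LEMMA:recursion-upper-bound}: $|\mathcal{H}|\le |V_1||V_2||V_3|+\sum_i|\mathcal{H}[V_i]|+\xi n^3$. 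One then plugs $|\mathcal{H}[V_i]|\le(\alpha+\xi)|V_i|^3/6$ for $\alpha:=\pi(C_5^{3-})$ and uses the elementary inequality $6x_1x_2x_3/(1-\sum x_i^3)\le 1/4$ to conclude $\alpha\le 1/4+O(\xi)$. The recursion is what circumvents the SDP shortfall; no single SDP ever proves the sharp constant.

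Two smaller points. Your plan for general $\ell$ is unnecessary: since $C_5^{3-}$ admits a homomorphism into the $2$-blowup of itself, supersaturation gives $\pi(C_\ell^{3-})\le\pi(C_5^{3-})$ for every $\ell\ge 5$ with $\ell\not\equiv 0\pmod 3$ immediately. And stability does not ``fall out'' of a certificate here; the paper derives it separately by iterating Lemma~\ref{LEMMA:recursion-upper-bound} and tracking the error through the recursion.
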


%
% Given $\ell$ pairwise disjoint sets $V_1, \ldots, V_{\ell}$, we use $\mathcal{K}[V_1, \ldots, V_{\ell}]$ to denote the complete $\ell$-partite graph with parts $V_1, \ldots, V_{\ell}$, and $\mathcal{K}[V_1, \ldots, V_{\ell}]$ to denote the complete $\ell$-partite $3$-graph with parts $V_1, \ldots, V_{\ell}$.  

For an arbitrary integer $\ell\ge 5$ with $\ell \not\equiv 0 \Mod{3}$, Theorem~\ref{THM:turan-density-C5-} implies by the general standard results on the Tur\'an density of blowups (see e.g.~{\cite[Theorem~2.2]{Kee11}} or~{\cite[Claim~5.14]{BL24C5Minus}}) that 
$\pi(C_{\ell}^{3-}) \le  \frac{1}{4}$ (which is equality by the above construction).

%By the general standard results on the Tur\'an density of blowups (see e.g.~{\cite[Theorem~2.2]{Kee11}} or~{\cite[Claim~5.14]{BL24C5Minus}}), it follows from Theorem~\ref{THM:turan-density-C5-} that $\pi(C_{\ell}^{3-}) \le  \frac{1}{4}$ , which the above contruction 

We also establish the Erd{\H o}s--Simonovits-type stability property~\cite{Erdos67a,Sim68} for $C_{5}^{3-}$ in the following theorem. A $3$-graph is called a \textbf{$T_{\mathrm{rec}}$-subconstruction} if it a subgraph of some $T_{\mathrm{rec}}$-construction. 
\begin{theorem}\label{THM:C5Minus-stability}
    For every $\varepsilon > 0$, there exist $\delta > 0$ and $N_0$ such that the following statement holds for all $n \ge N_0$. 
    Suppose that $\mathcal{H}$ is an $n$-vertex $C_{5}^{3-}$-free $3$-graph with at least $\frac{1}{4}\binom{n}{3} - \delta n^3$ edges. 
    Then $\mathcal{H}$ is a $T_{\mathrm{rec}}$-subconstruction after removing at most $\varepsilon n^3$ edges.      
\end{theorem}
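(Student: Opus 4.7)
The plan is to prove the theorem by induction on $n$, with the inductive step relying on a ``one-step'' structural result that extracts a top-level $3$-partition from any near-extremal $C_5^{3-}$-free $3$-graph, after which the argument recurses naturally into each part.

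The key intermediate claim I would establish first is the following: for every $\eta > 0$, there exist $\delta_1 > 0$ and $N_1$ such that if $\mathcal{H}$ is an $n$-vertex $C_5^{3-}$-free $3$-graph with $n \ge N_1$ and $e(\mathcal{H}) \ge \frac{1}{4}\binom{n}{3} - \delta_1 n^3$, then $V(\mathcal{H})$ admits a partition $V_1 \cup V_2 \cup V_3$ into non-empty parts such that at most $\eta n^3$ edges of $\mathcal{H}$ have exactly two vertices in one single part $V_i$. The mechanism should be a link-graph cleaning argument: in a near-extremal $\mathcal{H}$, most vertices $v$ have a link graph close to a complete bipartite graph on some bipartition $A_v \sqcup B_v$ of $V(\mathcal{H}) \setminus \{v\}$, and taking a common coarsening of these bipartitions (while reassigning a small set of ``atypical'' vertices) yields the desired top-level $3$-partition. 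The $C_5^{3-}$-freeness then forces that only few edges violate the $3$-partite-blow-up structure at the top level, typically via a supersaturation argument showing that an abundance of ``bad'' $2$-$1$-$0$ edges embeds a copy of $C_5^{3-}$ inside the otherwise $3$-partite structure. This claim is essentially the stability companion to Theorem~\ref{THM:turan-density-C5-} and is expected to fall out of the same analysis that establishes the density bound.

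Given this claim, the inductive step proceeds as follows. From the bound
\begin{equation*}
    e(\mathcal{H}) \;\le\; |V_1|\,|V_2|\,|V_3| + \sum_{i=1}^{3} e(\mathcal{H}[V_i]) + \eta n^3,
\end{equation*}
combined with the asymptotic inequality
\begin{equation*}
    |V_1|\,|V_2|\,|V_3| + \sum_{i=1}^3 \frac{1}{4}\binom{|V_i|}{3} \;\le\; \frac{1}{4}\binom{n}{3} + o(n^3)
\end{equation*}
(which has asymptotic equality only when all $|V_i| \approx n/3$, by an $AM$--$HM$-type argument), one deduces that each $|V_i| \ge c\,n$ for some $c > 0$ and that $e(\mathcal{H}[V_i]) \ge \frac{1}{4}\binom{|V_i|}{3} - \delta'\,|V_i|^3$ for some $\delta' \to 0$ as $\delta_1, \eta \to 0$. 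Since each $\mathcal{H}[V_i]$ is also $C_5^{3-}$-free, the inductive hypothesis (applied with error parameter $\varepsilon/2$ and its associated $\delta_{\varepsilon/2}$ in place of $\delta'$) yields that each $\mathcal{H}[V_i]$ is a $T_{\mathrm{rec}}$-subconstruction after deleting at most $(\varepsilon/2)\,|V_i|^3$ edges. Removing also the at most $\eta n^3$ ``bad'' $2$-$1$-$0$ edges turns $\mathcal{H}$ into a $T_{\mathrm{rec}}$-subconstruction globally, with total deletions at most $\eta n^3 + (\varepsilon/2)\sum_i |V_i|^3 \le (\eta + \varepsilon/2)\,n^3 \le \varepsilon n^3$ once $\eta \le \varepsilon/2$. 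Parts whose size falls below the inductive threshold contribute only $O(1)$ edges and are absorbed trivially.

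The main obstacle is the one-step stability claim itself, since it demands converting the asymptotic density statement of Theorem~\ref{THM:turan-density-C5-} into a \emph{quantitative} structural conclusion. Depending on how Theorem~\ref{THM:turan-density-C5-} is established, one either extracts the claim directly from a flag-algebra stability proof (which typically certifies the near-extremal structure up to $o(1)$ error in the blow-up parameters), or proves it via a direct combinatorial analysis of link graphs of near-extremal $C_5^{3-}$-free hypergraphs: any significant deviation from the $3$-partite-blow-up pattern at the top level should produce $\Omega(\eta)\,n^3$ forbidden configurations through supersaturation/removal-type estimates, contradicting the density lower bound.
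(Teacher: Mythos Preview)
Your high-level strategy---a one-step structural lemma followed by recursion into the parts---matches the paper exactly. The gap is in how you propose to obtain that one-step lemma, and in the bookkeeping of the recursion.

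The ``link graph close to complete bipartite on a bipartition $A_v \sqcup B_v$ of $V(\mathcal{H})\setminus\{v\}$'' picture is wrong for this problem: in $T_{\mathrm{rec}}$, the link of $v \in V_1$ is $\mathcal{K}[V_2,V_3]$ \emph{plus a recursive piece inside $V_1$} contributing a constant fraction (about $1/9$) of the link edges, so there is no bipartition of all of $V\setminus\{v\}$ to which the link is close, and the ``common coarsening of bipartitions'' step does not get off the ground. The paper's one-step lemma (Lemma~\ref{LEMMA:weak-stability}, via Lemma~\ref{LEMMA:recursion-upper-bound}) instead rests on two computer-generated flag-algebra facts: Proposition~\ref{pr:2} produces a $3$-partition with at least $\approx 0.194\,n^3/6$ transversal edges (by partitioning vertices according to how their links meet a well-chosen edge, after passing via the Removal Lemma to a $\{C_5^{3-},K_4^{3-}\}$-free subgraph), and then, for a \emph{locally maximal} such partition, Proposition~\ref{pr:3} gives the crucial inequality $|B| \le \tfrac{99}{100}|M|$. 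It is this inequality---not any supersaturation or direct embedding argument---that forces both $|B|$ and $|M|$ to be small once the edge density is near $1/24$. Your hedge that the claim ``might fall out of a flag-algebra stability proof'' is in spirit correct, but the actual content (local maximality plus $|B|\le 0.99|M|$) is not something a generic stability template produces.

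Your recursion also needs more care. Invoking the inductive hypothesis ``with error $\varepsilon/2$'' is not available when you induct on $n$ for fixed $\varepsilon$; moreover, the density slack degrades by a constant factor at each level (Lemma~\ref{LEMMA:weak-stability} only gives $|\mathcal{H}[V_i]| \ge (1/24 - 500\xi)|V_i|^3$ from slack $\xi$), which naively blows up over $\Theta(\log n)$ levels. The paper closes this by proving, for fixed $\varepsilon$ and $\delta$, a tailored statement by induction on $m \le n$: the allowed density slack is $\delta(n/m)^{10}$ (which grows by a factor $\ge 2^{10}$ when $m$ halves, beating the factor-$500$ loss), and the deletion bound $\tfrac{600\delta}{\varepsilon^{10}}\,m^3 + \tfrac{\varepsilon n^2 m}{6}$ is engineered so that its cubic part contracts (since $\sum_i |V_i|^3 \le \tfrac{3}{8}m^3$) while its linear-in-$m$ part is exactly additive over the parts.
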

In the following theorem, we establish a refined structural property for maximum $C_{5}^{3-}$-free 3-graphs on sufficiently large vertex set.  
%For disjoint sets $V_1,V_2,V_3$, let $\mathca  \mathcal{K}[V_1,V_2,V_3]$ denote the complete $3$-partite $3$-graph with parts $V_1,V_2,V_3$.
%
\begin{theorem}\label{THM:exact-level-one}
    For every $\varepsilon > 0$ there exists $n_0$ such that the following holds for every $n \ge n_0$. 
    Suppose that $\mathcal{H}$ is an $n$-vertex $C_{5}^{3-}$-free $3$-graph with $\mathrm{ex}(n,C_{5}^{3-})$ edges. Then there exists a partition $V_1 \cup V_2 \cup V_3 = V(\mathcal{H})$ such that 
    \begin{enumerate}[label=(\roman*)]
        \item\label{THM:exact-level-one-1} $\left||V_i| - \frac{n}{3}\right| \le \varepsilon n$ for every $i \in [3]$, and
 %       \item\label{THM:exact-level-one-2}  $\mathcal{H}[V_1,V_2,V_3]$ is complete $3$-partite, and 
        \item\label{THM:exact-level-one-3} $\mathcal{H} \setminus \bigcup_{i\in [3]}\mathcal{H}[V_i] = \mathcal{K}[V_1, V_2, V_3]$. 
    \end{enumerate}
\end{theorem}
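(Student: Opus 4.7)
The plan is to combine the stability result of Theorem~\ref{THM:C5Minus-stability} with the density value $\pi(C_5^{3-}) = 1/4$ from Theorem~\ref{THM:turan-density-C5-} to identify an approximate tripartite structure, and then to upgrade it to the exact one. Given $\varepsilon > 0$, we fix $\varepsilon' \ll \varepsilon$ and apply Theorem~\ref{THM:C5Minus-stability} with parameter $\varepsilon'$ to obtain, for $n$ large, a partition $V_1 \cup V_2 \cup V_3 = V(\mathcal{H})$ such that $\mathcal{H}$ becomes a $T_{\mathrm{rec}}$-subconstruction with this top-level tripartition after removing at most $\varepsilon' n^3$ edges. Out of all such partitions, we would select one that \emph{maximises} $|\mathcal{H} \cap \mathcal{K}[V_1, V_2, V_3]|$, the number of crossing triples actually present as edges of $\mathcal{H}$.

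For~(i), the strategy is to combine the upper bound
\[
e(\mathcal{H}) \le |V_1||V_2||V_3| + \sum_{i=1}^{3} e(\mathcal{H}[V_i]) \le |V_1||V_2||V_3| + \Bigl(\tfrac{1}{24}+o(1)\Bigr)\sum_{i=1}^{3} |V_i|^3
\]
(the inner inequality using Theorem~\ref{THM:turan-density-C5-} on each $\mathcal{H}[V_i]$) with the lower bound $e(\mathcal{H}) \ge t_{\mathrm{rec}}(n) = \tfrac{n^3}{24}(1+o(1))$. Writing $x_i = |V_i|/n$, this forces the function $f(x_1,x_2,x_3) := x_1 x_2 x_3 + \tfrac{1}{24}(x_1^3+x_2^3+x_3^3)$ on the simplex to be within $o(1)$ of its maximum $1/24$. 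A direct computation shows that $f$ attains $1/24$ only at $(1/3,1/3,1/3)$ and at the three corners $(1,0,0), (0,1,0), (0,0,1)$. The corner configurations will be excluded by partition optimality: if some $|V_i| = o(n)$, then $\mathcal{H}[V_1]$ is near-extremal on $|V_1|\approx n$ vertices and, by stability applied to $\mathcal{H}[V_1]$, admits a balanced tripartition $U_1 \cup U_2 \cup U_3$ of $V_1$; distributing $V_2 \cup V_3$ into the $U_j$ arbitrarily yields a competing partition of $V(\mathcal{H})$ with $(\tfrac{1}{27}-o(1))n^3$ crossing edges, contradicting the maximising choice. Hence $\bigl||V_i| - n/3\bigr| \le \varepsilon n$.

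For~(ii), we need (a)~every crossing triple is in $\mathcal{H}$ and (b)~no edge of $\mathcal{H}$ has exactly two vertices in one $V_i$. We plan to prove~(b) first: given a hypothetical $2{+}1$ edge $\{x,y,z\} \in \mathcal{H}$ with $x, y \in V_1, z \in V_2$, we pick $p \in V_3, q \in V_2\setminus\{z\}$ for which the three crossing triples $\{z,y,p\}, \{y,p,q\}, \{p,q,x\}$ all lie in $\mathcal{H}$; together with $\{x,y,z\}$ they form a $C_5^{3-}$ on the cyclic sequence $x, z, y, p, q$, contradicting the $C_5^{3-}$-freeness of $\mathcal{H}$. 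Such $(p,q)$ will exist by an averaging argument combining the $\varepsilon' n^3$ bound on missing crossing triples with the balance from~(i). For~(a), assume some crossing $\{a,b,c\}$ with $a \in V_1, b \in V_2, c \in V_3$ is missing; edge-maximality of $\mathcal{H}$ forces a $C_5^{3-}$ in $\mathcal{H} + \{a,b,c\}$ using two further vertices $v_4, v_5$ and three additional edges of $\mathcal{H}$. A short argument on part assignments (if all four $C_5^{3-}$-edges were crossing, consecutive edges would force $p_{v_1}=p_{v_4}$, a contradiction) rules out all four triples being crossing, and an exhaustive case analysis over the nine positions of $(v_4, v_5)$ for each of the three orbits of $C_5^{3-}$-edges containing $\{a,b,c\}$ shows that every configuration contains at least one $2{+}1$ edge, contradicting~(b).

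\paragraph{Main obstacle.} The delicate point will be~(b): producing a pair $(p,q)$ that simultaneously makes all three auxiliary triples into edges of $\mathcal{H}$. Although on average such $(p,q)$ exist by Step~1, a pathological $2{+}1$ edge could be surrounded by many missing crossing triples; to handle this we rely on both the quantitative $\varepsilon' n^3$ bound and the partition-optimality to keep the local missing-triple count under control. Once~(b) is in hand, the case analysis for~(a) reduces to a short combinatorial check on the $3\times 9$ configurations of $C_5^{3-}$-extensions.
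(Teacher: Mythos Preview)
Your plan for part~(i) works (with minor adjustments), and the case analysis for~(a) is correct in spirit. The genuine gap is in step~(b). To build your $C_5^{3-}$ on $x,z,y,p,q$, you need some $p \in V_3$ with $\{z,y,p\} \in \mathcal{H}$; but the global bound $|M| \le \varepsilon' n^3$ permits the single pair $\{y,z\}$ to have \emph{all} of its $|V_3|\approx n/3$ crossing extensions missing, and then no $p$ exists. Partition optimality does not rescue this: local maximality only says $|L_\mathcal{H}(y) \cap \mathcal{K}[V_2,V_3]| \ge |L_\mathcal{H}(y) \cap \mathcal{K}[V_1,V_j]|$; it gives neither an absolute lower bound on this bipartite link nor any control on the degree of the particular vertex $z$ inside it. So ``averaging plus partition-optimality'' cannot keep the local missing-triple count small at a specific bad edge.

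The paper supplies the missing per-vertex control via an extra flag-algebra step you have not invoked: a 2-graph computation (Proposition~\ref{pr:4}, repackaged as the vertex-stability Proposition~\ref{pr:5}) together with a recursively-established degree bound $d_\mathcal{H}(v) = (1/8 + o(1))n^2$ for every vertex (Proposition~\ref{PROP:C5minus-max-degree}) yield $|M_v|,|B_v| = O(\varepsilon n^2)$ for all $v$. Even then the paper does not attempt a direct $C_5^{3-}$; instead it proves two codegree facts---every crossing pair inside a bad edge has $d_M \ge (1/3 - O(\varepsilon))n$, while every crossing pair has $d_B \le O(\varepsilon)n$---which force $|M| > |B|$ whenever $B \neq \emptyset$, and then swapping $B$ for $M$ contradicts extremality. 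Note that the first codegree fact says precisely that $d_M(yz)$ \emph{is} nearly $n/3$ for the crossing pair of any bad edge, i.e.\ the very regime that defeats your direct construction. Once $B = \emptyset$, maximality gives $M = \emptyset$ immediately, so the case analysis for~(a) is not needed either.
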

Thus, Conclusion~\ref{THM:exact-level-one-3} of the theorem states that $\mathcal{H}$ is the complete $3$-partite $3$-graph plus possibly some edges inside parts. It is easy to see that $\C H$ remains $C_{5}^{3-}$-free if we replace $\C H[V_i]$ by any $C_{5}^{3-}$-free $3$-graph on~$V_i$. Thus, each induced subgraph $\C H[V_i]$ is maximum $C_{5}^{3-}$-free and Theorem~\ref{THM:exact-level-one} applies to it provided $|V_i|\ge n_0$, and so on. We see that $\C H$ exactly follows the construction for $T_{\mathrm{rec}}$, except for parts of size less than $n_0$. It follows that there is a constant $C=C(n_0)$ such that 
\begin{align*}
    t_{\mathrm{rec}}(n)
    \le \mathrm{ex}(n,C_{5}^{3-})
    \le t_{\mathrm{rec}}(n) + Cn,\quad\mbox{for all $n\ge 1$}.
\end{align*}
Thus, we know the function $\mathrm{ex}(n,C_{5}^{3-})$ within additive $O(n)$.

\begin{remark}
    With some additional arguments (see Proposition~\ref{PROP:C5minus-max-degree}), one can show that for every $\delta > 0$, there exists $N_0$ such that for every $n \ge N_0$, 
    \begin{align}\label{equ:smoothness-C5-}
        \left|\mathrm{ex}(n,C_{5}^{3-}) - \mathrm{ex}(n-1,C_{5}^{3-}) - \frac{n^2}{8}\right| 
        \le \delta n^2. 
    \end{align}
    Using this result and further arguments, Theorem~\ref{THM:exact-level-one}~\ref{THM:exact-level-one-1} can be strengthened to 
    $\left||V_i| - \frac{n}{3}\right| \le \varepsilon \sqrt{n}$ for every $i \in [3]$. 
    Since we were unable to further improve $\varepsilon \sqrt{n}$ to $1$, we omit the details here. 
\end{remark}

Our proofs of the above results crucially use the flag algebra machinery developed by Razborov~\cite{Raz07} and are computer-assisted. Independently of this work, analogous results by a similar method were obtained by Lidick\'y--Mattes--Pfender \cite{LMP24}.

We adopt the general strategy of Balogh--Hu--Lidick{\'y}--Pfender used in~\cite{BHLP16C5} for determining the inducibility of the $5$-cycle $C_5$ (where asymptotically extremal graphs are also obtained via a recursive construction). Rather roughly, our proof is based on the following two crucial claims about an unknown $C_{5}^{3-}$-free $3$-graph $\C H$ with $n\to\infty$ vertices and at least $(\frac14+o(1)){n\choose 3}$ edges.
 \begin{enumerate}
 \item\label{it:A} Proposition~\ref{pr:2} shows that there exists a vertex partition $V_1\cup V_2\cup V_3=V(\C H)$ such that $|\C H\cap \C K[V_1,V_2,V_3]|\ge(0.194...+o(1)){n\choose 3}$ (which is not too far from the upper bound $(n/3)^3=(0.222...+o(1)) {n\choose 3}$).
 \item\label{it:B} Proposition~\ref{pr:3} shows that if the partition in Item~\ref{it:A} is additionally assumed to be \textbf{locally maximal} (meaning that by moving any one vertex between parts we cannot increase the number of \textbf{transversal edges}, that is, the edges in $\C H\cap \mathcal{K}[V_1,V_2,V_3]$) and we compare $\C H\setminus\bigcup_{i=1}^3\C H[V_i]$ with the complete 3-partite $3$-graph $\mathcal{K}[V_1,V_2,V_3]$ 
 then the number of additional edges is at most 0.99 times the number of missing edges%, within additive $o(n^3)$
 .
\end{enumerate}
 Thus have identified a top-level partition such that, ignoring triples inside parts, $\C H$ does not perform better than a copy of $T_{\mathrm{rec}}$ that uses the same parts. We can recursively apply this result to each part $\C H[V_i]$ as long $|V_i|$ is sufficiently large. Now, routine calculations imply that $|\C H|\le (\frac14+o(1)){n\choose 3}$.

Our main new idea, when compared  to~\cite{BHLP16C5,LMP24}, is a trivial combinatorial observation that for every partition $V_1\cup V_2\cup V_3$ of $V(\C H)$ there is also a locally maximal one with at least as many transversal edges. It seems that the standard flag algebra calculations do not ``capture'' this kind of argument and doing an intermediate \textbf{local refinement} step (when  the flag algebra version  of the local maximality is manually added to the SDP program) may considerably improve the power of the method. A possible way to compare computer-generated results is by the size of their certificates (say, as a single compressed zip-file). Our proof, even without attempting to reduce the set of used types, occupies around 1MB of space while that from~\cite{LMP24} has size over 7GB.

%%%%%%%%%%%%%%%%%%%%%%%%%%%%%%%%%%%%%%%%%%
\section{Preliminaries}\label{SEC:Prelim}
For pairwise disjoint sets $V_1, \ldots, V_{\ell}$, we use $\mathcal{K}[V_1, \ldots, V_{\ell}]$ to denote the complete $\ell$-partite $\ell$-graph with parts $V_1, \ldots, V_{\ell}$. Also, $\mathcal{K}^2[V_1, \ldots, V_{\ell}]$ denotes the complete $\ell$-partite $2$-graph with parts $V_1, \ldots, V_{\ell}$ (thus its edge set is $\bigcup_{1\le i<j\le \ell}\mathcal{K}[V_i,V_j]$).

For a set $X$ and an integer $k\ge 0$, let ${X\choose k}:=\{S \subseteq X \colon |S|=k\}$ denote the family of all $k$-subsets of~$X$.

Let $\C H$ be a $3$-graph $\mathcal{H}$. Its \textbf{order} is $v(\C H):=|V(\C H)|$.
For a vertex $v \in V(\mathcal{H})$, 
the \textbf{link} of $v$ is the following 2-graph on $V(\C H)\setminus\{v\}$:
\begin{align*}
    L_{\mathcal{H}}(v)
    \coloneqq \left\{e\in \binom{V(\mathcal{H})\setminus \{v\}}{2} \colon e \cup \{v\} \in \mathcal{H}\right\}.
\end{align*}
The \textbf{degree} $d_{\mathcal{H}}(v)$ of $v$ in $\mathcal{H}$ is given by $d_{\mathcal{H}}(v) \coloneqq |L_{\mathcal{H}}(v)|$.
We use $\delta(\mathcal{H})$, $\Delta(\mathcal{H})$, and $d(\mathcal{H})$ to denote the \textbf{minimum}, \textbf{maximum}, and \textbf{average degree} of $\mathcal{H}$, respectively.
%We will omit the subscript $\mathcal{H}$ if it is clear from the context.
For a pair of vertices $\{u,v\} \subseteq V(\mathcal{H})$, the \textbf{codegree} $d_{\mathcal{H}}(uv)$ of $\{u,v\}$ is the number of edges containing $\{u,v\}$. 
\hide{
Given another $3$-graph $F$, we let $N(F,\mathcal{H})$ denote the number of copies of $F$ in $\mathcal{H}$.
More precisely, 
\begin{align*}
    N(F,\mathcal{H})
    \coloneqq \left|\left\{\{e_1, \ldots, e_{|F|}\} \subseteq \mathcal{H} \colon \{e_1, \ldots, e_{|F|}\} \text{ spans a copy of } F\right\}\right|.
\end{align*}

The \textbf{$F$-density} of $\mathcal{H}$ is given by $d(F,\mathcal{H}) \coloneqq \frac{N(F,\mathcal{H})}{\binom{v(\mathcal{H})}{v(F)}}$.}%

The \textbf{$k$-blowup} $\C H^{(k)}$ of $\C H$ is the 3-graph whose vertex set is the union $\bigcup_{v\in V(\C H)} U_v$ of some disjoint $k$-sets $U_v$, one per each vertex $v\in V(\C H)$, and whose edge set is the union of the complete $3$-partite $3$-graphs $\mathcal{K}[U_x,U_y,U_y]$ over all edges $\{x,y,z\}\in\C H$. Informally speaking, $\C H^{(k)}$ is obtained from $\C H$ by cloning each vertex $k$ times.

For another $3$-graph $F$ of order $k$, the \textbf{(induced) density} $p(F,\C H)$ in $\C H$ is the number of $k$-subsets of $V(\C H)$ that span a subgraph isomorphic to $F$, divided by ${v(\C H)\choose k}$. When $F=K_3^3$ is the single edge, we get the \textbf{edge density} $\rho(\mathcal{H}) \coloneqq |\mathcal{H}|/{v(\mathcal{H})\choose 3}$.

\hide{For three pairwise disjoint parts $V_1, V_2, V_3 
\subseteq V(\mathcal{H})$, the induced $3$-partite subgraph $\mathcal{H}[V_1, V_2, V_3]$ is defined as 
\begin{align*}
    \mathcal{H}[V_1, V_2, V_3] 
    \coloneqq \left\{X\in \mathcal{H} \colon |X\cap V_i| = 1 \text{ for every } i \in [3]\right\};
\end{align*}
 equivalently, $\mathcal{H}[V_1, V_2, V_3]=\C H\cap \mathcal{K}[V_1,V_2,V_3]$.
}

Given two $r$-graphs $\mathcal{H}$ and $\mathcal{G}$, a map $\psi \colon V(\mathcal{H})\to V(\mathcal{G})$ is called a \textbf{homomorphism} if $\psi(e) \in \mathcal{G}$ for all $e\in \mathcal{H}$. 
Let $K_{4}^{3-}$ denote the $3$-graph on $\{1,2,3,4\}$ with edge set $\left\{\{1,2,3\},\{1,2,4\},\{1,3,4\}\right\}$. 
Observe that there exists a homomorphism from $C_{5}^{3-}$ to~$K_{4}^{3-}$. 
Thus, the Supersaturation Method (see e.g.~\cite[Theorem~2.2]{Kee11}) gives that
\begin{align}
    \pi(C_{5}^{3-}) = \pi(\{C_{5}^{3-},K_{4}^{3-}\}).\label{eq:K4} 
\end{align}
Therefore, in order to prove Theorem~\ref{THM:turan-density-C5-}, it suffices to show that $\pi(\{C_{5}^{3-},K_{4}^{3-}\}) \le \frac{1}{4}$. 

The \textbf{standard 2-dimensional simplex} is
\begin{align*}
    \mathbb{S}^3 
    \coloneqq \left\{(x_1, x_2, x_3) \in \mathbb{R}^3 \colon x_1+x_2+x_3 = 1 \text{ and } x_i \ge 0 \text{ for } i \in [3]\right\}.
\end{align*}
The following fact is straightforward to verify. 
\begin{fact}\label{FACT:ineqality}
    The following inequalities hold for every $(x_1, x_2, x_3) \in \mathbb{S}^{3}$$\colon$ 
    \begin{enumerate}[label=(\roman*)]
        \item\label{FACT:ineqality-1} $\frac{6x_1x_2x_3}{1-(x_1^3+x_2^3+x_3^3)} \le \frac{1}{4}$. 
        \item\label{FACT:ineqality-2} If $\min\{x_1, x_2, x_3\} \ge \frac{1}{5}$, then $x_1^3+x_2^3+x_3^3\le \frac{29}{125}<\frac13$ and $\frac{1+x_1^3+x_2^3+x_3^3}{1-(x_1^3+x_2^3+x_3^3)} < 2$. 
        \item\label{FACT:ineqality-3} $x_1 x_2 x_3 + \frac{x_1^3 + x_2^3 + x_3^3}{24} \le \frac{1}{24}$. Moreover, if $\max_{i\in [3]}\left|x_i - 1/3\right| \ge \varepsilon$ for some $\varepsilon \in [0, 1/10]$ and $x_i \in [1/5, 1/2]$ for every $i \in [3]$, then 
        \begin{align*}
            x_1 x_2 x_3 + \frac{x_1^3 + x_2^3 + x_3^3}{24}
            \le \frac{1}{24} - \frac{\varepsilon^2}{16}. 
        \end{align*}
    \end{enumerate}
\end{fact}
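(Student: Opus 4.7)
The plan is to reduce each claim to a classical symmetric-function identity using the normalisation $x_1+x_2+x_3=1$, after which everything follows from AM--GM or elementary convexity. For part~\ref{FACT:ineqality-1}, expanding $1=(x_1+x_2+x_3)^3$ yields $1-(x_1^3+x_2^3+x_3^3)=3(x_1x_2+x_1x_3+x_2x_3)-3x_1x_2x_3$, so the claim reduces to the well-known inequality $9 x_1x_2x_3 \le x_1x_2+x_1x_3+x_2x_3$ (equivalently $e_1 e_2 \ge 9 e_3$). This follows by writing $e_1 e_2 - 3e_3 = \sum_{i\ne j} x_i^2 x_j$ and splitting the six off-diagonal terms into two cyclic AM--GM triples, each bounded below by $3x_1x_2x_3$.

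For part~\ref{FACT:ineqality-2}, convexity of $t\mapsto t^3$ implies that $\sum_i x_i^3$ attains its maximum on the polytope $\{x_i\ge 1/5,\ x_1+x_2+x_3=1\}$ at an extreme point; up to permutation the only such point is $(3/5,1/5,1/5)$, giving $\sum_i x_i^3 \le 27/125 + 2/125 = 29/125 < 1/3$. The bound $(1+s)/(1-s)<2$ is then immediate from $s<1/3$.

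For part~\ref{FACT:ineqality-3}, the key step is the Schur-type identity
\[
\frac{1}{24}-x_1 x_2 x_3-\frac{x_1^3+x_2^3+x_3^3}{24}=\frac{1}{8}\bigl[x_1(x_2-x_3)^2+x_2(x_1-x_3)^2+x_3(x_1-x_2)^2\bigr],
\]
which one obtains by expanding $(x_1+x_2+x_3)^3=1$ and collecting terms. The first inequality is immediate from the nonnegativity of the right-hand side. For the refinement, the hypothesis $x_i\ge 1/5$ gives the lower bound $\frac{1}{40}\sum_{i<j}(x_i-x_j)^2$, and a short computation using $\sum_i x_i = 1$ yields the identity $\sum_{i<j}(x_i-x_j)^2 = 3\sum_i (x_i-1/3)^2$, which is at least $3\varepsilon^2$ whenever $\max_i |x_i-1/3|\ge \varepsilon$. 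Combining, the expression is at least $3\varepsilon^2/40 > \varepsilon^2/16$, as claimed; the hypothesis $x_i\le 1/2$ is not actually used in the bound and merely reflects the context in which the fact is applied. Every step is elementary, so there is no serious obstacle—the only observation worth flagging is the Schur-type identity, which makes both the nonnegativity and its quantitative strengthening transparent.
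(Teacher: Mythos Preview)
Your proof is correct. The paper itself does not give a proof of this fact, merely stating that it is ``straightforward to verify''; your argument supplies a clean verification. The Schur-type identity
\[
\frac{1}{24}-x_1x_2x_3-\frac{x_1^3+x_2^3+x_3^3}{24}=\frac{1}{8}\sum_{i}x_i(x_j-x_k)^2=\frac{e_2-9e_3}{8}
\]
(using $e_1=1$) is exactly the right tool for part~\ref{FACT:ineqality-3}, and your observation that the upper bound $x_i\le 1/2$ is unnecessary is correct (the lower bound $x_i\ge 1/5$ already forces $x_i\le 3/5$, and only the lower bound enters your estimate).
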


\section{Computer-generated results}\label{se:Flag-Algebra}

\hide{
\xl{added on 2024-10-21: I slightly modified and re-run Levente's no\_c5m file. The results I get are as follows:
\begin{itemize}
    \item I get $\alpha_{3.1} = \frac{27606157}{110100480} = 0.250736...$ with only the assumption that minimum degree is at least $\frac{1}{4}\binom{n}{2}$. 
    \item I get $\rho(F_{2,2,2}) \ge \beta = \frac{669843554483}{13762560000000} = 0.048671...$ with only the assumption that the edge density is at least $\frac{1}{4} - \frac{1}{10^6}$,
    \item These two values give $\alpha_{3.2} \ge \frac{\beta}{\alpha_{3.1}} = \frac{669843554483}{3450769625000} = 0.194114...$, which is still good enough for the step $B$ vs $M$. 
\end{itemize}}
\op{The constants still may change. My calculations (folder no\_C5\_files) show that we also do not need to use the min-degree assumption in Prop~\ref{pr:1}. The paper may look cleaner if we use as input the exact outputs from previous propositions, e.g. assuming in Proposition~\ref{pr:3} that the number of transversal edges is  at least exactly the value returned by Proposition~\ref{pr:2}, etc. I am not sure: this may result in final rational numbers having larger denominators. I can experiment with this once I am back, or someone else can try it.
}
\lb{I re-run all the calculations, the final file is here: https://github.com/bodnalev/supplementary_files/blob/main/no_c5m/no_c5m.ipynb
}
}

In this section, we present the results derived by computer using the flag algebra method of Razborov~\cite{Raz07}, which is also described in e.g.~\cite{Razborov10,BaberTalbot11,SFS16,GilboaGlebovHefetzLinialMorgenstein22}.
Since this method is well-known by now, we will be very brief. In particular,  we omit many definitions, referring
the reader to~\cite{Raz07,Razborov10} for any missing notions. Roughly speaking, a \textbf{flag algebra proof using $0$-flags on $m$ vertices} of an upper bound $u\in\I R$ on the given objective function $f$ 
%that can be written as a fixed linear combination of densities 
consists of an identity 
$$
u-f(\C H)=\mathrm{SOS}+\sum_{F\in\C F_m^0}c_Fp(F,\C H)+o(1),
$$ 
which is asymptotically true for any admissible $\C H$ with $|V(\C H)|\to\infty$, where the $\mathrm{SOS}$-term can be represented as a sum of squares (as described e.g.\ in~\ra{Section~3}), each coefficient $c_F\in\I R$ is non-negative, and $\C F_m^0$ consists of isomorphism types of \textbf{$0$-flags} (unlabelled $3$-graphs) with $m$ vertices. If $f(\C H)$ can be represented as a linear combination of the densities of members of $\C F_m^0$ in $\C H$ then finding the smallest possible $u$ amounts to solving a semi-definite program (SDP) with $|\C F_m^0|$ linear constraints. (So we write the size of $\C F_m^0$ in each case to give the reader some idea of the size of the programs that we had to solve.)

We formed the corresponding SDPs and then analysed the solutions returned by computer, using a modified version of the SageMath package. This package is still under development, for short guide on how to install it and its current functionality can be found in the GitHub repository \href{https://github.com/bodnalev/sage}{\url{https://github.com/bodnalev/sage}}. The calculations used for this paper and the generated certificates can be found in the ancillary folder of the arXiv version of this paper or in a separate GitHub repository \href{https://github.com/bodnalev/supplementary_files}{\url{https://github.com/bodnalev/supplementary_files}} in the folder \verb$no_c5m$. This folder also includes a self-contained verifier entirely written in Python 3, with a hard-coded version of the objective functions and assumptions found in the propositions of this section. The purpose of this program is to first ensure that the objective functions and assumptions match those in the propositions, and then to verify the correctness of the flag algebra proofs from the certificates. We did not optimise the verifier for speed (aiming instead for the clarity of the code); the full verification of all computer-generated results in this paper takes about 180 hours on an average PC.

As far as we can see, none of the certificates (even that for Proposition~\ref{pr:4}) can be made sufficiently compact to be human-checkable. 
Hence we did not make any systematic efforts to reduce the size of the obtained certificates, being content with ones that can be generated and verified on an average modern PC. In particular, we did not try to reduce the set of the used types needed for the proofs, although we did use the (standard) observation of Razborov~\ra{Section 4} that each unknown SDP matrix can be assumed to consist of 2 blocks (namely, the invariant and anti-invariant parts).

\newcommand{\al}[1]{\alpha_{\mathrm{\ref{pr:#1}}}}
\newcommand{\be}[1]{\beta_{\mathrm{\ref{pr:#1}}}}
\newcommand{\ep}[1]{\e_{\mathrm{\ref{pr:#1}}}}

\newcommand{\AlphaThreeOneRat}{\frac{126373441}{504000000}}
\newcommand{\AlphaThreeOneAppr}{0.25074} %0.250740954365079

\newcommand{\AlphaThreeTwoRat}{\frac{1607168566087}{8282009829376}}
\newcommand{\AlphaThreeTwoAppr}{0.19405} %0.194055380179148

The first result implies by~\eqref{eq:K4} that the Tur\'an density of $C_{5}^{3-}$ is at most 
 $$
\al1:=\AlphaThreeOneRat \simeq \AlphaThreeOneAppr...,
$$
 which is rather close to $1/4=0.25$, the value that we ultimately aim for.

\begin{proposition}\label{pr:Flag-raw-upper-bound}\label{pr:1}  For every integer $n\ge 1$, every $\{C_{5}^{3-},K_{4}^{3-}\}$-free $n$-vertex $3$-graph $\C H$ has at most $ \al1\,\frac{n^3}6$ edges.
\end{proposition}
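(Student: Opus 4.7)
The plan is to apply the flag algebra method of Razborov~\cite{Raz07} directly. By~\eqref{eq:K4} the Tur\'an density of $C_{5}^{3-}$ equals that of $\{C_{5}^{3-},K_{4}^{3-}\}$, so it suffices to work with this richer forbidden family, and the target value is the explicit rational $\al1\approx\AlphaThreeOneAppr$.

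First I would fix a flag size $m$ (around $m=7$ or $m=8$, balancing SDP size against the quality of the bound obtained), enumerate all isomorphism types of $\{C_{5}^{3-},K_{4}^{3-}\}$-free $0$-flags on $m$ vertices, and enumerate all types $\sigma$ together with their $\sigma$-flags of appropriate size. For each positive semi-definite Gram matrix $Q_\sigma$ of $\sigma$-flags, Razborov's averaging (Cauchy--Schwarz) principle expresses a non-negative quantity as a linear combination of the densities $p(F,\C H)$ for $F\in\C F_m^0$. Combining these with the linear expansion of $\rho(\C H)$ gives the semi-definite program of minimising $u\in\mathbb{R}$ subject to
\begin{align*}
u-\rho(\C H)\;=\;\mathrm{SOS}+\sum_{F\in\C F_m^0}c_F\,p(F,\C H)+o(1),
\end{align*}
with unknowns $c_F\ge 0$ and $Q_\sigma\succeq 0$. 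I would exploit the standard block decomposition from~\ra{Section~4} into invariant and anti-invariant parts to reduce the matrix sizes, and then solve the SDP numerically using a solver such as CSDP.

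The final step is to convert the numerical solution into an exact rational certificate by rounding each $c_F$ to a non-negative rational and each Gram matrix to a rational positive semi-definite matrix, with small perturbations chosen to preserve feasibility. Substituting the certificate back proves $\rho(\C H)\le\al1$ asymptotically; the statement is phrased in the slightly weaker form $|\C H|\le\al1\cdot n^{3}/6$ (rather than $\al1\binom{n}{3}$) partly to absorb the $O(1/n)$ slack when passing from the asymptotic identity to a bound valid for every $n\ge 1$, with the few small cases easily verified directly (for instance, any $K_{4}^{3-}$-free $3$-graph on at most $4$ vertices has at most $2$ edges).

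The main obstacle is producing a rounded rational certificate that yields exactly the clean fraction $\al1=\AlphaThreeOneRat$ rather than some nearby but messier value: this requires careful numerical rounding, attention to the kernels of the $Q_\sigma$ (since rounding elements of the kernel generically spoils positive semi-definiteness), and occasional tuning of which constraints are kept tight at the rounded solution. It is by now standard practice in flag algebra proofs but remains the technically most delicate piece of the argument.
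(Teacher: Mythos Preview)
Your overall plan---a flag algebra certificate on $7$-vertex $\{C_5^{3-}, K_4^{3-}\}$-free flags---matches the paper's (indeed the paper uses $m=7$ with $|\C F_7^0|=1127$), but you are missing the one substantive idea needed to convert the asymptotic bound into the claim ``for every $n \ge 1$''. A flag algebra certificate only yields $\rho(\C H) \le \al1 + C/n$ for a constant $C$ depending on the certificate (roughly the sum of the norms of the Gram matrices), and your plan to absorb this via the slack $n^3/6 - \binom{n}{3} \sim n^2/2$ works only if $C$ is at most about $3\al1 \approx 0.75$, which you have no a priori control over. If it fails, your ``few small cases'' could run up to some threshold you cannot determine from the certificate alone, and brute-force verification of all $\{C_5^{3-},K_4^{3-}\}$-free $3$-graphs on, say, $50$ vertices is hopeless.

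The paper's fix is a blowup argument: if some $n$-vertex $\C H$ had $\beta := 6|\C H|/n^3 > \al1$, then every $k$-blowup $\C H^{(k)}$ has the \emph{same} ratio $\beta$ (this is precisely why the normalisation is $n^3/6$ rather than $\binom{n}{3}$) and remains $\{C_5^{3-}, K_4^{3-}\}$-free since this family is closed under homomorphic images. The limit $\phi$ of $(\C H^{(k)})_{k\ge 1}$ then has edge density exactly $\beta > \al1$, contradicting the flag algebra bound on limit objects; so all $n$ are handled at once with no case analysis. (A minor side point: $\al1=\AlphaThreeOneRat$ is not a target one aims the rounding at; it is simply whatever rational the rounding procedure outputs, so there is no special difficulty in ``hitting'' it.)
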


\begin{proof} 
%As we already remarked, it holds that $\pi(C_{5}^{3-})=\pi(\{C_{5}^{3-},K_{4}^{3-}\})$ so
%By~\eqref{eq:K4}, it is enough to prove the result when we also forbid $K_4^{3-}$ (namely, the unique $3$-graph with $4$ vertices and $3$ edges). The desired bound $\pi(\{C_{5}^{3-},K_{4}^{3-}\})\le \al1$ 

Suppose on the contrary that some $3$-graph $\C H$  contradicts the proposition. Thus, $\beta:=6|\C H|/(v(\C H))^3$ is greater than $\al1$. For every integer $k\ge 1$, the $k$-blowup $\C H^{(k)}$ has $k\, v(\C H)$ vertices and $k^3\,|\C H|$ edges, so the ratio $6|\C H^{(k)}|/(v(\C H^{(k)}))^3$ for $\C H^{(k)}$ is also equal to~$\beta$. Also, $\C H^{(k)}$ is still $\{C_{5}^{3-},K_{4}^{3-}\}$-free since this family is closed under taking images under homomorphisms.

The sequence $(\C H^{(k)})_{k=1}^\infty$ converges as $k\to\infty$ to a limit $\phi$, where 
 $$
 \phi(F):=\lim_{k\to\infty} p(F,\C H^{(k)}),\quad\mbox{for a 3-graph $F$,}
 $$
 that is, $\phi$
sends a 3-graph $F$ to the limiting density of $F$ in $\C H^{(k)}$. We extend $\phi$ by linearity to formal linear combinations of $3$-graphs, obtaining a positive  homomorphism $\C A^0\to \I R$ from the flag algebra $\C A^0$ to the reals, see~\cite[Section~3.1]{Raz07}. It satisfies that $\phi(K_3^3)=\beta$, that is, the edge density in the limit $\phi$ is $\beta$. 

However, our (standard) application of the flag algebra method using $\{C_{5}^{3-},K_{4}^{3-}\}$-free $3$-graphs on $7$ vertices (resulting in an SDP with $|\C F_7^0|=1127$ inequality constraints) gives $\al1$ as an upper bound on the edge density. This contradicts our assumption $\beta>\al1$. 
\hide{We also included the assumptions (that hold for every Tur\'an-extremal $3$-graph) that the degrees of every two vertices differ by at most $o(n^2)$ and each is at least $(\frac14-o(1)){n\choose 2}$. Each of these assumptions can be multiplied by an (unknown) non-negative combination of flag of the same type, averaged and added to the final identity.}
\end{proof}

%
% The computer proves 
% \begin{align*}
%     K_{1,1,1}^{3} 
%     \le \frac{8281613}{33030144} \approx 0.2507289402.
% \end{align*}

For an $n$-vertex $3$-graph $\C H$ define the \textbf{max-cut ratio} to be
$$
\mu(\C H):=\frac{6}{n^3}\,\max\big\{\,|\C H\cap \C K[V_1,V_2,V_3]|: V_1,V_2,V_2\mbox{ form a partition of }V(\C H)\,\big\}.
$$

Observe that $\mu(\C H^{(k)})=\mu(\C H)$ for every  $k\ge 1$. Indeed, let the $k$-blowup $\C H^{(k)}$ have $n$ groups of twins $U_1,\dots,U_n$ corresponding to the $n$ vertices of $\C H$. 
Every $3$-partition $V_1\cup V_2\cup V_2$ of $V(\C H)$ lifts to the $3$-partition $V_1^{(1)}\cup V_2^{(k)}\cup V_3^{(k)}$ of $V(\C H^{(k)})$, where $V_i^{(k)}:=\bigcup_{v\in V_i} U_v$, implying that $\mu(\C H^{(k)})\ge \mu(\C H)$. On the other hand, take any partition $W_1, W_2, W_3$ of $V(\C H^{(k)})$. 
Suppose we change the 3-partition on some $U_i$ by putting $k_j$ vertices into $W_j$ for $j\in [3]$. Since every edge of $\C H^{(k)}$ intersects $U_i$ in at most one vertex, the number of transversal edges is an affine function of $(k_1,k_2,k_3)$ so it attains its maximum (over non-negative integers $k_1,k_2,k_3$ summing to $k$) when some $k_j=k$, that is, when we put the entire set $U_i$ into some~$W_j$. We can iteratively repeat this for each $i\in [n]$, without decreasing the number of transversal edges until we get a 3-partition of $\C H^{(k)}$ with each set $U_i$ being entirely inside a part. It follows that $\mu(\C H^{(k)})\le \mu(\C H)$, as desired.

The following result shows that an arbitrary $\{C_{5}^{3-}, K_{4}^{3-}\}$-free $3$-graph $\C H$ of fairly large size contains a large $3$-partite subgraph, namely with at least  $\al2 n^3/6$ edges, where 
\begin{equation}\label{eq:al2}
\al2:=\AlphaThreeTwoRat \simeq \AlphaThreeTwoAppr...\ .
\end{equation}
Note that this is not too far from the upper bound of  $(n/3)^3=0.2222...\cdot n^3/6$.

\begin{proposition}\label{pr:Flag-3-parts}\label{pr:2} 
Every  $\{C_{5}^{3-}, K_{4}^{3-}\}$-free $n$-vertex $3$-graph $\C H$ with at least $\be2\,\frac{n^3}{6}$ edges has the max-cut ratio $\mu(\C H)$ at least $\al2$, where $\be2:=\frac{2499}{10000}=\frac14-10^{-5}$.
%and $\al2$ was defined in~\eqref{eq:al2}.
\hide{admits a vertex partition $V(\C H)=V_1\cup V_2\cup V_3$  with
$$
\left|\C H\cap \mathcal{K}[V_1, V_2, V_3]\right|\ge \al2\,\frac{n^3}{6}.
$$}
\hide{
For every $\e>0$ there is $n_0$ such that every $\{C_{5}^{3-}, K_{4}^{3-}\}$-free $3$-graph $\C H$ with $n\ge n_0$ vertices and minimum degree at least $(\frac14-10^{-6}){n-1\choose 2}$ \xl{new calculations show that we can replace it with $|\mathcal{H}| \ge \left(\frac{1}{4} - \frac{1}{10^6}\right) \binom{n}{3}$}
\op{We need that $(\frac14-10^{-6})\le \al1$, which is not true, so we need to adjust  $10^{-6}$.}
admits a vertex partition $V(\C H)=V_1\cup V_2\cup V_3$  with
$$
\left|\C H\cap \mathcal{K}[V_1, V_2, V_3]\right|\ge (\al2-\e){n\choose 3}.
$$
}
\end{proposition}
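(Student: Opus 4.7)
The proof proceeds by contradiction and follows the flag-algebra template of Proposition~\ref{pr:1}. Suppose some $n$-vertex $\{C_5^{3-},K_4^{3-}\}$-free $3$-graph $\C H$ satisfies $|\C H|\ge\be2\,n^3/6$ while $\mu(\C H)<\al2$. The identity $\mu(\C H^{(k)})=\mu(\C H)$ established just before the proposition, combined with the preservation of asymptotic edge density and of the forbidden family under homomorphic images, lets us send $k\to\infty$ and pass to a convergent subsequence. This yields a positive flag algebra homomorphism $\phi\colon\C A^0\to\I R$ vanishing on $\{C_5^{3-},K_4^{3-}\}$, with $\phi(K_3^3)\ge\be2$ and limiting max-cut at most $\al2$.

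The new ingredient, relative to Proposition~\ref{pr:1}, is the encoding of the max-cut condition, which is existential (``there exists a good partition''), inside the linear language of flag algebra. The plan is to lift to the colored flag algebra in which each vertex of a flag carries an auxiliary color from $[3]$: any $3$-partition of $V(\C H^{(k)})$ induces such a coloring, under which transversal edges are exactly rainbow triples. The hypothesis $\mu(\C H^{(k)})<\al2$ then reads as a universal upper bound on rainbow-triple density over every coloring, a family of linear constraints directly usable in an SDP over colored $7$-vertex flags supported on $\{C_5^{3-},K_4^{3-}\}$-free $3$-graphs. (The choice of $\be2$ slightly less than $1/4$ leaves a small slack that the SDP can exploit.)

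With this setup in place, we run the standard Cauchy-Schwarz/SOS machinery~\cite{Raz07}, solving the SDP numerically and then rationalizing the dual certificate. The certificate exhibits $\be2-\phi(K_3^3)$ as a nonnegative combination of three kinds of terms: squares in the flag algebra, densities of flags containing a forbidden subgraph from $\{C_5^{3-},K_4^{3-}\}$ (which vanish on $\phi$), and slacks $\al2-(\text{rainbow triple density})$ coming from the coloring constraint (nonnegative by assumption). This contradicts $\phi(K_3^3)\ge\be2$, and the explicit rational value $\al2=\AlphaThreeTwoRat$ is the value that emerges from the rationalization.

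The principal obstacle is the colored-flag setup itself: introducing vertex colors substantially enlarges the flag set and hence the SDP, and one must impose the rainbow-triple bound consistently across all colorings simultaneously. Once the SDP is formulated, the numerical solve, the anti-invariant/invariant block decomposition of Razborov~\ra{Section~4}, and the final rationalization of the certificate, while computationally nontrivial, proceed along the lines already traveled in the proof of Proposition~\ref{pr:1}.
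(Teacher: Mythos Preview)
Your approach has a genuine gap in the encoding step. You propose to pass to the \emph{colored} flag algebra and use the hypothesis $\mu(\C H^{(k)})<\al2$ as ``a universal upper bound on rainbow-triple density over every coloring, a family of linear constraints''. But a single limit object in the colored theory corresponds to one limiting coloring, and an SDP certificate in that theory proves a statement valid for \emph{every} colored limit satisfying the stated density constraints. Your intended contradiction therefore needs: ``edge density $\ge\be2$ and rainbow-edge density $<\al2$ is infeasible''. This is simply false: take any dense $\{C_5^{3-},K_4^{3-}\}$-free $3$-graph with the monochromatic coloring (all vertices in $V_1$), which has edge density $\ge\be2$ and rainbow density $0$. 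There is no mechanism in your setup that forces the coloring to be the \emph{optimal} one, and without such a mechanism the SDP cannot possibly close. (Adding local-maximality constraints, as done later in Proposition~\ref{pr:3}, would help, but you do not mention this and it still would not suffice here without first knowing that a nontrivial cut exists.)

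The paper's route is entirely different and stays in the \emph{uncolored} theory. The key idea, which your proposal misses, is that a random edge $X=\{x_1,x_2,x_3\}\in\C H$ itself defines a canonical $3$-partition of $V(\C H)\setminus X$: put $v$ in part $V_i$ if the pair $X\setminus\{x_i\}$ lies in the link of $v$ (well-defined by $K_4^{3-}$-freeness). The density of transversal edges for this partition is then the density of an explicit $6$-vertex quantum $E$-flag $F_{2,2,2}^E$ in the rooted graph $(\C H,X)$. Averaging over $X$ gives $\phi(\eval{F_{2,2,2}^E})/\phi(\eval{E})$ as a lower bound on $\mu(\C H)$, and an uncolored $7$-vertex flag SDP (under the assumption $\phi(K_3^3)\ge\be2$) shows $\phi(\eval{F_{2,2,2}^E})\ge\al1\al2$; dividing by $\phi(\eval{E})\le\al1$ from Proposition~\ref{pr:1} yields the bound. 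The existential quantifier over partitions is thus handled by \emph{constructing} a specific partition from the flag structure, not by moving to a colored theory.
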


%\Qn{The code assumes that each degree is at least $1/4-10^{-6}$, instead of edge density.}

\begin{proof} 
	%The proof is obtained by adapting some ideas from~\cite{BHLP16C5}. 
Informally speaking, the main idea is that any edge $X$ of  $\C H$ defines a partition of $V(\C H)\setminus X$, where the part of a vertex $v$ depends on how the link graph of $v$ looks inside $X$, that is, on $L_{\C H}(v)\cap {X\choose 2}$. By the $K_{4}^{3-}$-freeness, this intersection has at most one element, so we have at most 4 non-empty parts. We ignore those vertices $v\in V(\C H)$ for which the intersection is empty. (We could have assigned these vertices e.g.\ randomly into parts and obtained a slightly better bound, but the stated bound suffices for our purposes.) Thus, we obtain three disjoint subsets $V_1^X,V_2^X,V_3^X\subseteq V(\C H)$. We pick $X\in\C H$ such that the size of $\C G^X:=\C H\cap \C K[V_1^X,V_2^X,V_3^X]$ is at least the average value when $X$ is a uniformly random edge of~$\C H$. We can express the product $P:=\rho(\C H)\cdot \I E|\C G^X|/{n-3\choose 3}$ via densities of $6$-vertex subgraphs. Indeed, $P$ is the probability, over random disjoint $3$-subsets $X,Y\subseteq V(\C H)$, of $X\in\C H$ and $Y\in\C G^X$; this in turn can be determined by first sampling a random 6-subset $X\cup Y\subseteq V(\C H)$ and computing its contribution to $P$ (which depends only on the isomorphism type of $\C H[X\cup Y]$). Thus, as a lower bound on $\max_{X\in\C H} |\C G^X|/{n-3\choose 3}$, we can take the ratio of the minimum value of $P$ to the maximum possible edge density (which was already  upper bounded by Proposition~\ref{pr:Flag-raw-upper-bound}). 
We bound $P$ from below,  via rather standard flag algebra calculations.

Let us briefly give some formal details. We work with the limit theory of $\{C_{5}^{3-}, K_{4}^{3-}\}$-free $3$-graphs. For a $k$-vertex \textbf{type} (i.e.,\ a fully labelled 3-graph) $\tau$ and an integer $m\ge k$, let $\C F_m^\tau$ be the set of all \textbf{$\tau$-flags} on $m$ vertices (i.e.,\ $3$-graphs with with $k$ labelled vertices that span a copy of $\tau$) up to label-preserving isomorphisms. Let $\I R\C F_m^\tau$ consist of  formal linear combinations $\sum_{F\in \C F_m^\tau} c_F F$ of $\tau$-flags with real coefficients (which we will call \textbf{quantum $\tau$-flags}).

For $i\in \{0,1,2\}$, the unique type with $i$ vertices (and no edges) is denoted by $i$. Let $E$ denote the type which consists of three roots spanning an edge.

We will use the following definitions, depending on an $E$-flag $(H, (x_1,x_2,x_3))$. (Thus, $\{x_1,x_2,x_3\}$ is an edge of a $\{C_{5}^{3-},K_{4}^{3-}\}$-free $3$-graph $H$.) For $i\in[3]$, we define $V_i=V_i(H, (x_1,x_2,x_3))$ to consist of those vertices $y\in V(H)\setminus X$ such that  the $H$-link of $y$ contains the pair $X\setminus\{x_i\}$, where we denote $X\coloneqq \{x_1,x_2,x_3\}$. As we already observed,  the sets $V_1,V_2,V_3$ are pairwise disjoint by the $K_{4}^{3-}$-freeness of $H$. Let 
$$
T=T(H, (x_1,x_2,x_3)) := H\cap \C K[V_1,V_2,V_3]
$$ be the 3-graph consisting of those edges in $H$ that transverse the parts $V_1,V_2,V_3$.
%, that is, have exactly one vertex in each part~$V_i$. 

When we normalise $|T|$ by ${v(H)-3\choose 3}^{-1}$, the obtained ratio can be viewed as the probability over a uniformly random $3$-subset $Y$ of $V(H)\setminus\{x_1,x_2,x_3\}$  that $Y$ is an edge of $H$, the link of each vertex of $Y$ has exactly one pair inside $X$ and the obtained three pairs are pairwise different.
This ratio is the \textbf{density}  (where the roots has to be preserved) of the quantum $E$-flag
% the following linear combination
\begin{equation}\label{eq:F222E}
	F_{2,2,2}^E\coloneqq \sum_{F\in \C F_6^E} |T(F)|\,  F
	%\in\I R\C F_6^E
\end{equation}
in the $E$-flag $(H,(x_1,x_2,x_3))$.
%of $E$-flags on 6 vertices.
 Note that each coefficient $|T(F)|$ in~\eqref{eq:F222E} is either 0 or 1 since there is only one potential set $Y$ to test inside every $F\in\C F_6^E$ (while the scaling factor ${6-3\choose 3}^{-1}=1$ is omitted from~\eqref{eq:F222E}).

For a $k$-vertex type $\tau$ and $(F,(x_1,\dots,x_k))\in\C F_m^\tau$,  the \textbf{averaging} $\eval{F}$ is defined as the quantum (unlabelled) $0$-flag $q\, F\in\I R\C F_m^0$, where $q$ is the probability for a uniform random injection $f:[k]\to V(F)$ that $(F,(f(1),\dots,f(k)))$ is isomorphic to $(F,(x_1,\dots,x_k))$, and this definition is extended to $\I R\C F_m^\tau$ by linearity, see e.g.\ \cite[Section 2.2]{Raz07}.

%Define $F_{2,2,2}\coloneqq \eval{F_{2,2,2}^E}\in\I R\C F_6$. Thus the density of $F_{2,2,2}$ in a $3$-graph $H$ is the edge density in $H$ multiplied by the average density of $F_{2,2,2}^E$ for a uniform edge $E\in H$. 

\hide{
Let us return to the lemma. Suppose that some $\e>0$ contradicts it. Then there is a  sequence of counterexamples $\C H$ of order $n\to\infty$. By passing to a subsequence, we can assume that they converge to a flag algebra limit object, which is a positive algebra homomorphism $\phi:\C A^0\to\I R$. The min-degree assumption translates in the statement that the random homomorphism $\boldsymbol{\phi}^1:\C A^1\to\I R$ is at least $\frac14-10^{-6}$ with probability 1. We also add the extra assumption that the edge density is at most $\frac{25073}{100000}$, coming from Proposition~\ref{pr:Flag-raw-upper-bound} applied to each $3$-graph $\C H$. We deal with these assumptions in the standard way: each can be  multiplied by an (unknown) non-negative combination of flags of the same type and the result is averaged and added to the final identity.}

Let us return to the proposition. Suppose that some $n$-vertex $3$-graph $\C H$ contradicts it. Let $\beta:=\mu(\C H)$ be the max-cut ratio for $\C H$.
By our assumption, $\beta<\al2$.

Let $\phi:\C A^0\to \I R$ be the limit of the uniform blowups $\C H^{(k)}$ of $\C H$. The assumption on the edge density of $\C H$ gives that $\phi(K_3^3)\ge \be2$.

\hide{Then there is a  sequence of counterexamples $\C H$ of order $n\to\infty$. By passing to a subsequence, we can assume that they converge to a flag algebra limit object, which is a positive algebra homomorphism $\phi:\C A^0\to\I R$. The min-degree assumption translates in the statement that the random homomorphism $\boldsymbol{\phi}^1:\C A^1\to\I R$ is at least $\frac14-10^{-6}$ with probability 1. We also add the extra assumption that the edge density is at most $\frac{25073}{100000}$, coming from Proposition~\ref{pr:Flag-raw-upper-bound} applied to each $3$-graph $\C H$. We deal with these assumptions in the standard way: each can be  multiplied by an (unknown) non-negative combination of flags of the same type and the result is averaged and added to the final identity.}

For each $k$-blowup $\C H^{(k)}$ pick an edge $\{x_1,x_2,x_3\}\in \C H^{(k)}$ such that the density (when normalised by ${kn-3\choose 3}^{-1}$) of $\C H^{(k)}$-edges transversing the corresponding three parts is at least the average value. This average density tends to $\phi(\eval{F_{2,2,2}^E})/\phi(\eval{E})$ as $k\to\infty$.

Flag algebra calculations on flags with at most $7$ vertices show that, under the assumption that $\phi(K_3^3)\ge \be2$, 
it holds that $\phi(\eval{F_{2,2,2}^E})\ge \al1\al2$. (In fact, we first computed the rational number $\gamma$ returned by computer and then defined $\al2\coloneqq \gamma/\al1$.)

Thus, we have by Proposition~\ref{pr:Flag-raw-upper-bound} 
that $
{\phi(\eval{F_{2,2,2}^E})}/{\phi(\eval{E})}\ge 
 %\frac{\beta}{\al1}
 \al2.
$
However, this ratio is upper bounded by the limit as $k\to\infty$ of  $\mu(\C H^{(k)})=\mu(H)=\beta<\al2$, a contradiction.
\hide{
For each $n$-vertex counterexample $\C H$ from the convergence subsequence, pick $\{x_1,x_2,x_3\}\in \C H$ such that the density (when normalised by ${n-3\choose 3}^{-1}$) of $\C H$-edges transversing the corresponding three parts is at least the average value. If $n$ is sufficiently large, this is at least $\phi(\eval{F_{2,2,2}^E})/\phi(\eval{E})-\e/2$. By above and Proposition~\ref{pr:Flag-raw-upper-bound}, we have that
$
 {\phi(\eval{F_{2,2,2}^E})}/{\phi(\eval{E})}\ge 
 %\frac{\beta}{\al1}
 \al2
$,
 giving the required bound and leading to a contradiction to our assumption that each $\C H$ is a counterexample.}
	\end{proof}

In order to state the next result, we have to provide various definitions for a $3$-graph~$\C H$. Recall that a partition $V(\C H)=V_1\cup V_2\cup V_3$ of its vertex set is \textbf{locally maximal} if $|\C H\cap \mathcal{K}[V_1, V_2, V_3]|$ does not increase when we move one vertex from one part to another. For example, a partition that maximises $|\C H\cap \mathcal{K}[V_1, V_2, V_3]|$ is locally maximal; another (more efficient) way to find one is to start with an arbitrary partition and keep moving vertices one by one between parts as long as each move strictly increases the number of transversal edges.
For a partition $V(\C H)=V_1\cup V_2\cup V_3$, let 
\begin{align}
    B_{\mathcal{H}}(V_1, V_2, V_3)
         %=B(\C H,\{V_1,V_2,V_3\}) 
         &\coloneqq \left\{ X\in \C H\colon  \{|X\cap  V_1|,\,|X\cap V_2|,\, |X\cap V_3|\}=\{0,1,2\}\right\}, \label{equ:def-bad-triple}\\
    M_{\mathcal{H}}(V_1, V_2, V_3)
         %=M(\C H,\{V_1,V_2,V_3\}) 
         &\coloneqq  \left\{ X\in {V(\C H)\choose 3}\setminus\C H\colon  \{|X\cap  V_1|,\,|X\cap V_2|,\, |X\cap V_3|\}=\{1,1,1\} \right\} \label{equ:def-missing-triple}
\end{align}
     % \begin{eqnarray*}
     %     B_{\mathcal{H}}(V_1, V_2, V_3)
     %     %=B(\C H,\{V_1,V_2,V_3\}) 
     %     &\coloneqq & \left\{ X\in \C H\colon  \{|X\cap  V_1|,\,|X\cap V_2|,\, |X\cap V_3|\}=\{2,1,0\}\right\},\\
     %     M_{\mathcal{H}}(V_1, V_2, V_3)
     %     %=M(\C H,\{V_1,V_2,V_3\}) 
     %     &\coloneqq & \left\{ X\in \overline {\C H}\colon  \{|X\cap  V_1|,\,|X\cap V_2|,\, |X\cap V_3|\}=\{1,1,1\} \right\}
     % \end{eqnarray*}
be the sets of \textbf{bad} and \textbf{missing} edges respectively. We will omit $(V_1, V_2, V_3)$ and the subscript $\mathcal{H}$ if it is clear from the context. If we compare $\C H$ with the recursive construction with the top parts $V_1,V_2,V_3$ then,  with respect to the top level, $B$ consists of the additional edges of $\C H$ while $M=\mathcal{K}[V_1,V_2,V_3]\setminus \C H$ consists of the top triples not presented in~$\C H$. Note that the edges inside a part are not classified as bad or missing. 

The key result needed for our proof is the following.

\begin{proposition}\label{pr:3}
If $\C H$ is a $\{C_{5}^{3-},K_{4}^{3-}\}$-free $n$-vertex $3$-graph and $V(\C H)=V_1\cup V_2\cup V_3$ is a locally maximal partition with 
\begin{equation}\label{eq:3}
|\C H\cap \mathcal{K}[V_1,V_2,V_3]|\ge \be3 \frac{n^3}{6},
\end{equation}
 where $\be3\coloneqq 19/100$, then with $B=B_{\C H}(V_1,V_2,V_3)$ and $M=M_{\C H}(V_1,V_2,V_3)$ defined in~\eqref{equ:def-bad-triple} and~\eqref{equ:def-missing-triple} respectively, we have
    \begin{align}
    \label{eq:BM}
        |B|-\frac{99}{100}\,|M|
        \le 0.
    \end{align}
\end{proposition}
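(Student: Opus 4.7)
The plan is to work in a flag algebra variant in which each vertex of $\C H$ carries an additional label indicating the part $V_1$, $V_2$, or $V_3$ to which it belongs, so that $\C H$ together with its locally maximal partition becomes a single object in a $3$-coloured $\{C_{5}^{3-},K_{4}^{3-}\}$-free theory. In this theory, the bad-edge density $6|B|/n^3$, the missing-triple density $6|M|/n^3$, and the transversal-edge density $6|\C H\cap \C K[V_1,V_2,V_3]|/n^3$ are all linear combinations of densities of coloured $4$-vertex flags, so both the hypothesis~\eqref{eq:3} and the target inequality~\eqref{eq:BM} become linear statements about coloured flag densities. Mirroring the proof of Proposition~\ref{pr:1}, I would first assume a counterexample, replace $\C H$ by its $k$-blowups (lifting the partition by cloning), and pass to a limit positive homomorphism $\phi\colon \C A^0_{\mathrm{col}}\to \I R$ satisfying $\phi(\text{transversal density})\ge \be3$ together with $\phi(\text{bad density})-\tfrac{99}{100}\,\phi(\text{missing density})>0$.

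Next I would encode local maximality as a family of linear inequalities in this coloured flag algebra. For each ordered pair $i\ne j$ in $[3]$, moving a vertex $v\in V_i$ to $V_j$ yields a gain of $|L_{V_i\setminus\{v\},V_k}(v)|$ new transversal edges and a loss of $|L_{V_j,V_k}(v)|$ old ones, where $k$ is the remaining index; local maximality asserts that gain does not exceed loss. Working with the $1$-type $\tau_i$ consisting of a single vertex coloured $i$, this inequality becomes the non-negativity in $\phi$ of an explicit quantum $\tau_i$-flag on $3$ vertices. Each such quantum flag can then be multiplied by an unknown non-negative linear combination of $\tau_i$-flags of the same type, averaged via $\eval{\cdot}$ into a quantum $0$-flag, and added to the usual sum-of-squares certificate in the SDP. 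This manual injection is the \emph{local refinement} step that the introduction flags as the key methodological novelty.

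With these ingredients, proving~\eqref{eq:BM} reduces to solving an SDP that seeks a non-negative combination of (i) sum-of-squares terms coming from suitable types, (ii) the six local-maximality quantum flags weighted by unknown non-negative combinations of $\tau_i$-flags, and (iii) the transversal-density slack $\phi(\text{transversal density})-\be3$, whose total equals $\tfrac{99}{100}\,\phi(\text{missing density})-\phi(\text{bad density})+o(1)$. I expect the main obstacle to be practical rather than conceptual: one must pick a rich enough family of flags, presumably on $6$ or $7$ vertices as in Propositions~\ref{pr:1} and~\ref{pr:2}, for the SDP to be feasible, and then round the numerical solution to an exact rational certificate. The slack factor $\tfrac{99}{100}$ in~\eqref{eq:BM} (as opposed to $1$) is what provides numerical room for such rounding; omitting the local-maximality constraints would collapse the bound, so the crux of the argument is the correct formulation and implementation of those six inequalities in the coloured flag algebra.
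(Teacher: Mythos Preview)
Your outline matches the paper's initial formulation almost exactly: pass to blowup limits in a $3$-coloured $\{C_5^{3-},K_4^{3-}\}$-free theory, encode local maximality as linear inequalities on $1$-vertex types, and feed everything into an SDP. (One small slip: $|B|$, $|M|$, and the transversal density are already expressible via $3$-vertex coloured $0$-flags, not $4$-vertex ones.)

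The paper, however, flags a concrete practical obstacle that you anticipate only in the abstract: with an \emph{ordered} $3$-partition and $6$-vertex flags one has $|\C F_6^0|=16181$, which the authors deem too large for a conventional machine, while $5$-vertex flags do not suffice. Their workaround is to pass to \emph{unordered} partitions, encoded by the complete $3$-partite $2$-graph on $V(\C H)$; this cuts $|\C F_6^0|$ down to $2840$. The price is that your six separate local-maximality inequalities collapse to a single weaker averaged one, namely $\boldsymbol{\phi}^{1}(K_{\bullet|\circ|\circ})\ge \tfrac12\,\boldsymbol{\phi}^{1}(K_{\bullet\circ|\circ})$, asserting only that moving a vertex to a \emph{random} other part does not help in expectation. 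That this weaker constraint still suffices is the non-obvious part.

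Finally, your remark that the $\tfrac{99}{100}$ slack ``provides numerical room for rounding'' underestimates the difficulty: since $B=M=\emptyset$ for \emph{every} complete $3$-partite $\C H$ with part ratios in an open region, there is a two-parameter family of extremal limits, forcing the certificate to satisfy a system of polynomial identities in the part ratios. The paper handles this by imposing vanishing of all partial derivatives at $(\tfrac13,\tfrac13)$ during rounding.
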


\begin{proof}
We would like to run flag algebra calculations on the limit $\phi$ of blowups $\C H^{(k)}$
%,V_1^{(k)},V_2^{(k)},V_3^{(k)})$ 
of a hypothetical counterexample $(\C H,V_1,V_2,V_3)$ in the theory of $\{C_{5}^{3-},K_{4}^{3-}\}$-free $3$-graphs which are 3-coloured (that is, we have 3 unary relations $V_1,V_2,V_3$ such that each vertex in a flag satisfies exactly one of them). 

For $i\in [3]$, let $(1,i)$ denote the 1-vertex type where the colour of the unique vertex is~$i$.
Consider the random homomorphism $\boldsymbol{\phi}^{(1,i)}$, which is the limit 
%(after passing to a subsequence of $n$ for which convergence holds) 
of taking a uniform random colour-$i$ root. Note that, by~\eqref{eq:3}, each part $V_i^{(k)}\subseteq V(\C H^{(k)})$ is non-empty (in fact, it occupies a positive fraction of vertices), so $\boldsymbol{\phi}^{(1,i)}$ is well-defined.

The local maximality assumption directly translates in the limit to the  statement that, for each $i\in [3]$, if $h$ and $j$ are denote the indices of the two remaining parts (that is, $\{i,j,h\}=[3]$) then
 \begin{equation}\label{eq:LM}
 \boldsymbol{\phi}^{(1,i)}(K_{j,h})\ge \max
\left\{\boldsymbol{\phi}^{(1,i)}(K_{i,j}),\boldsymbol{\phi}^{(1,i)}(K_{i,h})\right\}
\end{equation}
  with probability 1, where $K_{a,b}$ is the $(1,i)$-flag on three vertices that span an edge with the free vertices having colours $a$ and $b$ (and, of course, the root vertex having colour $i$). 

We can now run the usual flag algebra calculations where each of the inequalities in~\eqref{eq:3} and~\eqref{eq:LM} can be multiplied by an unknown non-negative combination of respectively 0-flags and $(1,i)$-flags (and then averaged out in the latter case). The final inequality should prove that the left-hand side of~\eqref{eq:BM} is non-positive. Note that the ratio $|M|/{n\choose 3}$    is the density of the 0-flag consisting of single rainbow non-edge in the 0-flag $(\C H,V_1,V_2,V_3)$. Likewise, 
$|B|/{n\choose 3}$  can be written as the density of an appropriate quantum 0-flag with $6$ constituents depending on the ordered sequence of intersections $(|X\cap V_i|)_{i=1}^3$ for $X\in B$ (which is a permutation of $(2,1,0)$ for every bad $X$).  
Now we face the standard flag algebra task of finding the maximum of the quantum 0-flag expressing $|B|-\frac{99}{100}\, |M|$ and checking if it is at most 0. 

However, an issue with this approach is that 5-vertex flags are not enough to prove the desired conclusion, while we have rather many 6-vertex flags (namely, $|\C F_6^0|=16181$) and the resulting SDP problem seems too large for a conventional PC. Our way around this is based on the observation that the parts play symmetric role in both the statement and the conclusion of Proposition~\ref{pr:3}, so instead we work with 3-partitions of the vertex set which are unordered. 

The easiest way to formally define an unordered partition $\{V_1,V_2,V_3\}$ of a set $V$ is to encode it by the complete 3-partite 2-graph with parts $V_1,V_2,V_3$. Thus, a 0-flag in our theory is a pair $(H,F)$ where $H$ is a ternary relation representing a $\{C_{5}^{3-},K_{4}^{3-}\}$-free $3$-graph and $F$ is a binary relation representing a complete 3-partite 2-graph on the same set $V$. Thus, $x$ and $y$ are adjacent in $F$ if and only if they are from different parts.
A sub-flag induced by $V'\subseteq V$ is obtained by restricting the relations $H$ and $F$ to $V'$, etc. For example, the density of transversal edges is given by the 3-vertex 0-flag $(H,F)$ where the three vertices form an edge in $H$ and span a triangle in $F$.  The family of complete 3-partite 2-graphs can be characterized by forbidding induced subgraphs (namely, 3 vertices spanning exactly one edge and 4 vertices spanning all 6 edges), so the flag algebra machinery developed in~\cite{Raz07} applies here.

However, when we translate the assumption in~\eqref{eq:LM} to this theory using only 3-vertex flags, we lose some information. Namely, we use the  weaker version of~\eqref{eq:LM} which, for finite 3-graphs, states that if we move any vertex to a random part among the other two then the expected change in the number of transversal edges is non-positive. Its limit version is that with probability~1 we have for each $i\in [3]$ that 
\begin{equation}\label{eq:LMSymm}
 \boldsymbol{\phi}^{1}(K_{\bullet|\circ|\circ}) \ge \frac12\,  \boldsymbol{\phi}^{1}(K_{\bullet\circ|\circ}),
% \boldsymbol{\phi}^{1}(K_{j,h})\ge \frac12  \left(\boldsymbol{\phi}^{1}(K_{i,j})+\boldsymbol{\phi}^{1}(K_{i,h})\right),
\end{equation}
 where $\boldsymbol{\phi}^{1}$ is the limiting random homomorphism $\C A^1\to\I R$ corresponding to making a random vertex for the root, $K_{\bullet|\circ|\circ}$ is the 3-vertex 1-flag that spans an edge in $H$ and a triangle in $F$, and $K_{\bullet\circ|\circ}$ is  the 3-vertex 1-flag that spans an edge in $H$ and induces a 2-edge path in $F$ with the root as its endpoint. (Also, let us observe that $\boldsymbol{\phi}^{1}$ is well-defined in this theory even if some part is empty.) 
 
% \hide{
%  \label{eq:LMSymm}
% % \boldsymbol{\phi}^{1}(K_{\bullet|\circ|\circ}) \ge \frac12\,  \boldsymbol{\phi}^{1}(K_{\bullet\circ|\circ}),
%  \boldsymbol{\phi}^{1}(K_{j,h})\ge \frac12  \left(\boldsymbol{\phi}^{1}(K_{i,j})+\boldsymbol{\phi}^{1}(K_{i,h})\right),
% \end{equation}
%   where $\boldsymbol{\phi}^{1}$ is the limiting random homomorphism $\C A^1\to\I R$ corresponding to making a random vertex for the root,  and $h,j$ denote the remaining indices in $[3]\setminus\{i\}$.
%  %(as in~\eqref{eq:LM}).
%  In the theory with unordered 3-partitions, $K_{j,h}$ is represented by the single 3-vertex 1-flag that spans an edge in $H$ and a triangle in $F$, while the left-hand side of~\eqref{eq:LMSymm} can be expressed using the 3-vertex 1-flag that spans an edge in $H$ and induces a 2-edge path in $F$ with the root as its endpoint. (Also, let us observe that $\boldsymbol{\phi}^{1}$ is well-defined in this theory even if some part is empty.) 
% }

Returning to the proposition, suppose that a $3$-graph $\C H$ and sets $V_1,V_2,V_3$ contradict it. Every $k$-blowup $\C H^{(k)}$ has the natural vertex $3$-partition, $V_1^{(k)}\cup V_2^{(k)}\cup V_3^{(k)}=V(\C H^{(k)})$ 
with the corresponding 3-graphs of bad and missing edges being exactly the $k$-blowups of $B$ and~$M$. It is easy to check that this partition is also locally maximal, the inequality in \eqref{eq:3} holds for $(\C H^{(k)},V_1^{(k)},V_2^{(k)},V_3^{(k)})$, and the left-hand of~\eqref{eq:BM} is at least $\Omega(k^3)$ as $k\to\infty$. The desired contradiction  follows from standard flag algebra calculations on 6-vertex flags (when there are only $|\C F_6^0|=2840$ 0-flags), where we use~\eqref{eq:LMSymm} as an assumption (instead of the full local maximality). \hide{Our implementation of this theory was to use three unary relations (working with the complete 3-partite graph $F$ implicitly defined by them), as this was easier to implement given how we represent flags on computer.}
\end{proof}

Next we would like to show that if we add a vertex $v$ to a complete 3-partite 3-graph $\mathcal{K}[V_1, V_2, V_3]$ while keeping it $C_{5}^{3-}$-free then we lose in the degree of $v$ unless its link is ``correct''. This reduces to a 2-graph problem since we can operate with the link graph $L$ of $v$ plus a 3-partition $V_1\cup V_2\cup V_3$ of its vertex set. For $i,j\in [3]$, let $$
 L_{i,j}\coloneqq L\cap \C K[V_i,V_j]=\{\{x,y\}\in L\colon  x\in V_i, y\in V_j\}
 $$ be the set of edges of $L$ with the endpoints in $V_i$ and $V_j$. In particular, $L_{i,i}$ is $L\cap {V_i\choose 2}$.
By symmetry between the parts we can assume that
\begin{equation}\label{eq:L23}
|L_{2,3}|\ge \max\{\,|L_{1,2}|,\, |L_{1,3}|\,\}.
\end{equation}
Thus,  $V_1$ would be a part to include $v$ if we wanted to maximise the number of transversal edges containing $v$. With this in mind, we define the set of \textbf{bad} edges to be
 \begin{equation}\label{eq:GraphB}
  B_{v}\coloneqq L_{1,2}\cup L_{1,3}\cup L_{2,2}\cup L_{3,3},
  \end{equation}
 while the set $M_{v}:=\C K[V_2,V_3]\setminus L$ of \textbf{missing} edges consists of non-adjacent pairs in $V_2\times V_3$. Note that no pair inside $V_1$ is designated as bad or missing.
Since we cannot forbid $K_{4}^{3-}$ in this problem (as $L$ can contain many triangles without any $C_{5}^{3-}$ in the corresponding $3$-graph), we have to exclude the possibility that $L$ is the union of the cliques on $V_2$ and $V_3$, which is another configuration attaining $|L|\ge (\frac14+o(1)){n\choose 2}$. This is achieved by adding an assumption that $L$ contains some positive fraction of pairs between parts. With some experimenting, a result which suffices for us and which can be proved by computer is the following.

\begin{proposition}\label{pr:4} For every $\e>0$ there is $\delta>0$ such that the following holds for every $n\ge 1/\delta$. If $L$ is a 2-graph  on $[n]$ and $V_1\cup V_2\cup V_3=[n]$ is a vertex partition satisfying, in the notation above, the inequality in~\eqref{eq:L23} and
\begin{enumerate}[label=(\roman*)]
        \item\label{it:41}  $|V_i|\ge (\frac14-\delta)n$ for each $i\in [3]$,
        \item\label{it:42} $|L_{1,2}|+|L_{1,3}|+|L_{2,3}|\ge (\frac1{16}-\delta) n^2$, 
        \item\label{it:43} for every distinct $h,i,j\in [3]$ there are at most $\delta n^4$ quadruples $(w,x,y,z)\in V_h\times V_i^2\times V_j$ with 
        $wx,yz\in L$,
        %$wx\in L_{h,i}$ and $yz\in L_{i,j}$,      
        %$\min\{|L_{1,2}|,|L_{1,3}|,|L_{2,3}|\}=0$, and 
        \item\label{it:44} for every distinct $i,j\in [3]$ there are at most $\delta n^3$ triples $(x,y,z)\in V_i^2\times V_j$ with $xy,xz \in L$.
        %$xy\in L_{i,i}$ and $yz\in L_{i,j}$,
        \end{enumerate}
 then $|B_{v}|-\frac9{10}|M_{v}|\le \e n^2$.
 \end{proposition}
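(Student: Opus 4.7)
The strategy is to apply the flag algebra method in the theory of 2-graphs on a vertex set equipped with a partition into three unary relations $V_1,V_2,V_3$. Suppose for contradiction that the proposition fails for some $\e>0$: there is a sequence of counterexamples $(L,V_1,V_2,V_3)$ with $n\to\infty$ and $\delta\to 0$. Passing to a convergent subsequence yields a limit homomorphism $\phi\colon \C A^0\to \I R$ in this theory, and each of the hypotheses translates into a standard linear constraint on~$\phi$: assumption \ref{it:41} gives $\phi(V_i)\ge 1/4$ for each $i\in[3]$; assumption \ref{it:42} gives a lower bound on the sum of the densities of the three 2-vertex 0-flags representing inter-part edges; assumptions \ref{it:43} and \ref{it:44} force certain 4-vertex and 3-vertex 0-flag densities to vanish; and \eqref{eq:L23} becomes two linear inequalities comparing the densities of the three inter-part edge 0-flags. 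The quantity $|B_v|-\tfrac{9}{10}|M_v|$, once normalized by $\binom{n}{2}$, is a fixed linear combination of five 2-vertex 0-flags---one for each of $L_{1,2},L_{1,3},L_{2,2},L_{3,3}$ (with positive coefficient) and one for the non-edges of $L$ in $V_2\times V_3$ (with negative coefficient)---and we wish to prove that $\phi$ is non-positive on this combination.

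Once the translation is complete, the proof reduces to a standard flag algebra SDP on 5-vertex (or, if this fails to certify, 6-vertex) $0$-flags in this 3-coloured 2-graph theory. As in Propositions~\ref{pr:1}--\ref{pr:3}, each of the inequality constraints above is multiplied by an unknown non-negative combination of $0$-flags of an appropriate order and added to a sum-of-squares term constructed from unknown PSD matrix blocks in the usual way~\ra{Section~3}. The target is a quantum $0$-flag identity that expresses the negative of the objective as a sum of these non-negative ingredients; solving the SDP numerically and rounding the floating-point solution to rationals, exactly as for the earlier propositions of this section, should produce such a certificate. The explicit dependence of $\delta$ on $\e$ is then recovered from the coefficients that the certificate places on the vanishing-density constraints \ref{it:43} and~\ref{it:44}.

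The principal obstacle is computational. A 3-coloured 2-graph theory has substantially more flag types than an uncoloured 3-graph theory on the same vertex count, and unlike Proposition~\ref{pr:3} we no longer have full $S_3$-symmetry among the parts---only the involution $V_2\leftrightarrow V_3$, since \eqref{eq:L23} and the definitions of $B_v,M_v$ single out $V_1$. Reducing the SDP size by quotienting by this $\I Z/2\I Z$ symmetry will therefore be important. A secondary subtlety is that assumptions \ref{it:42}, \ref{it:43}, and \ref{it:44} must be used with non-trivial multipliers in any successful certificate: without \ref{it:42}, the ``union of two cliques on $V_2$ and $V_3$'' limit object (with $\phi(L_{2,3})=0$ and $\phi(L_{2,2})+\phi(L_{3,3})$ large) remains a valid alternative, exactly the configuration flagged in the paragraph preceding the statement. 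Picking the right set of flag types to include and then rounding the SDP solution so that the final identity is exact over $\I Q$ is where the bulk of the experimentation will go.
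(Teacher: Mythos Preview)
Your proposal is correct and follows essentially the same route as the paper: pass to a limit of counterexamples in the theory of $2$-graphs with an ordered vertex $3$-partition, translate each hypothesis into a linear constraint on the limit homomorphism, and certify via a flag-algebra SDP that the quantum $0$-flag expressing $(|B_v|-\tfrac{9}{10}|M_v|)/\binom{n}{2}$ is non-positive. The paper confirms that $5$-vertex flags suffice (with $|\C F_5^0|=450$) and handles the vanishing densities from~\ref{it:43} and~\ref{it:44} by forbidding the corresponding configurations in the theory rather than carrying them as zero-density constraints, but this is equivalent to what you describe.
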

 \begin{proof}
 We work in the theory of 2-graphs with an ordered vertex $3$-partition. For example, $|L_{i,j}|/{n\choose 2}$ is the density of the 0-flag consisting of two adjacent vertices, one in the $i$-th part and the other in the $j$-the part. 
 
Suppose that the statement is false for some $\e>0$ and take a growing sequence of counterexamples as $\delta\to 0$. By compactness, we can pass to a subsequence which converges to some limit $\phi$. 
In the limit, $\delta$ disappears; for example, by Item~\ref{it:44}, we can assume that we forbid an edge inside a part incident to an edge across. Computer calculations show that, with the limit versions of Items~\ref{it:41}--\ref{it:44} as assumptions, the quantum graph that represents $(B_{v}-\frac9{10}M_{v})/{n\choose 2}$ can be proved to be non-positive,  with the proof using flags on at most 5 vertices (with $|\C F_5^0|=450$). This contradiction proves the proposition. 
 \end{proof}

The following result is a corollary of Proposition~\ref{pr:4}. 
\begin{proposition}[Vertex Stability]
\label{pr:5}
    For every $\varepsilon > 0$ there exist $\delta>0$ and $n_0$ such that the following statement holds for every $n \ge n_0$. 
    Suppose that $\mathcal{H}$ is a $C_{5}^{3-}$-free $3$-graph on a disjoint union $V_1 \cup V_2 \cup V_3 \cup \{v\}$ satisfying 
    \begin{enumerate}[label=(\roman*)]
        \item\label{it:51} $|V_1| + |V_2| + |V_3| = n$ and $\min_{i\in [3]}\{|V_i|\} \ge \frac{n}{4}$, 
        \item\label{it:52} $|\mathcal{H}\cap \C K[V_1, V_2, V_3]| \ge |V_1||V_2||V_3| - \delta n^3$, 
        \item\label{it:53} 
        $|L_{\mathcal{H}}(v) \cap \mathcal{K}^2[V_1, V_2,V_3]| \ge \frac{n^2}{16}$, and 
        %$\sum_{1\le i<j\le 3} |L_{\mathcal{H}}(v) \cap \mathcal{K}[V_i, V_j]| \ge \frac{n^2}{16}$, and 
        \item\label{it:54} $|L_{\mathcal{H}}(v) \cap \mathcal{K}[V_2, V_3]| \ge \max\left\{\,|L_{\mathcal{H}}(v) \cap \mathcal{K}[V_1, V_2]|,\,|L_{\mathcal{H}}(v) \cap \mathcal{K}[V_1, V_3]|\,\right\}$. 
    \end{enumerate}
    Then $|B_{v}| - \frac{9}{10}|M_{v}| \le \varepsilon n^2$, where 
    \begin{align*}
        B_{v}
        \coloneqq L_{\mathcal{H}}(v) \cap \left( \binom{V_2}{2} \cup \binom{V_3}{2} \cup \mathcal{K}[V_1, V_2\cup V_3]\right)
        \quad\text{and}\quad 
        M_{v}
        \coloneqq \mathcal{K}[V_2, V_3] \setminus L_{\mathcal{H}}(v). 
    \end{align*}
\hide{    In particular, 
    \begin{align*}
        \left|L_{\mathcal{H}}(v) \setminus \binom{V_1}{2}\right|
        \le |V_2||V_3| - \max\left\{\frac{|B_{v}|}{9},\,\frac{|M_{v}|}{10} \right\} + \varepsilon n^2.
    \end{align*}
    }
\end{proposition}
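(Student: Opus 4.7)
The plan is to apply Proposition~\ref{pr:4} with the 2-graph $L:=L_{\mathcal{H}}(v)$ on the partition $V_1\cup V_2\cup V_3$. Given $\varepsilon>0$, I would first invoke Proposition~\ref{pr:4} with this $\varepsilon$ to obtain a constant $\delta_4>0$, and then choose $\delta$ in Proposition~\ref{pr:5} to be a small multiple of~$\delta_4$ (with $n_0$ large enough). Conditions~\ref{it:41} and~\ref{it:42} of Proposition~\ref{pr:4} are immediate from~\ref{it:51} and~\ref{it:53} of Proposition~\ref{pr:5}, and the asymmetry~\eqref{eq:L23} is exactly~\ref{it:54}. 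What remains is to verify the two forbidden-configuration bounds~\ref{it:43} and~\ref{it:44} of Proposition~\ref{pr:4}; this is where $C_{5}^{3-}$-freeness interacts with the near-completeness of transversal triples given by~\ref{it:52}.

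For~\ref{it:44}: a bad triple $(x,y,z)\in V_i^2\times V_j$ satisfies $\{v,x,y\},\{v,x,z\}\in\mathcal{H}$. For any $u\in V_k$ with $\{i,j,k\}=[3]$, the cyclic ordering $y,v,x,z,u$ has consecutive triples $\{v,x,y\},\{v,x,z\},\{x,z,u\},\{z,u,y\}$, so $\{v,x,y,z,u\}$ spans a copy of $C_{5}^{3-}$ whenever both transversal triples $\{x,z,u\}$ and $\{z,u,y\}$ lie in $\mathcal{H}$. Hence for each bad triple and each $u\in V_k$, at least one of these is a missing transversal triple. Each missing transversal triple can be generated in this way at most $|V_i|$ times (the free variable being $y\in V_i$), so the number of bad triples is at most $2|V_i|\cdot\delta n^3/|V_k|=O(\delta n^3)$ by~\ref{it:52} and~\ref{it:51}.

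For~\ref{it:43}: a bad quadruple $(w,x,y,z)\in V_h\times V_i^2\times V_j$ satisfies $\{v,w,x\},\{v,y,z\}\in\mathcal{H}$. The cyclic ordering $v,x,w,z,y$ has consecutive triples $\{v,w,x\},\{w,x,z\},\{w,y,z\},\{v,y,z\}$, so the five vertices span a $C_{5}^{3-}$ whenever both transversal triples $\{w,x,z\}$ and $\{w,y,z\}$ lie in $\mathcal{H}$. Hence at least one of these is missing for each bad quadruple, and counting as before bounds the number of bad quadruples by $O(\delta n^4)$. Choosing $\delta$ small enough makes both bounds fit under $\delta_4 n^3$ and $\delta_4 n^4$ respectively, and Proposition~\ref{pr:4} then yields the desired inequality $|B_v|-\tfrac{9}{10}|M_v|\le\varepsilon n^2$. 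The main (minor) obstacle is pinning down the two cyclic orderings above that convert a pair of link edges into a $C_{5}^{3-}$ whose remaining two edges are both transversal; the rest is routine double counting.
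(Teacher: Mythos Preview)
Your proof is correct and follows essentially the same route as the paper's: reduce to Proposition~\ref{pr:4} applied to $L=L_{\mathcal H}(v)$, with hypotheses~\ref{it:51}--\ref{it:54} providing~\ref{it:41},~\ref{it:42} and~\eqref{eq:L23} directly, and the $C_5^{3-}$-freeness together with~\ref{it:52} yielding~\ref{it:43} and~\ref{it:44} by exactly the double-counting you describe. Your cyclic orderings produce the same two transversal triples as the paper's. The one small point you should make explicit is that in case~\ref{it:43} the cyclic ordering $v,x,w,z,y$ gives five \emph{distinct} vertices only when $x\neq y$; the degenerate quadruples with $x=y$ number at most $n^3$ and are absorbed by taking $n_0\ge 1/\delta_4$, consistent with your ``$n_0$ large enough''.
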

\begin{proof}
%[Proof of Proposition~\ref{pr:5}]
 Given $\e>0$, choose in this order sufficiently small positive constants $\delta\gg 1/n_0$. 
 
Let $\C H$, $v$ and $V_i$'s be as in the proposition. Let $L\coloneqq L_{\C H}(v)$ be the link graph of $v$ in $\C H$. We would like to apply Proposition~\ref{pr:4} with $\e$ and $2\delta$. For this, we have to check that the last two items of Proposition~\ref{pr:4} are satisfied for
the quadruple $(L,V_1,V_2,V_3)$ with respect to $2\delta$ (as the first two items follows from our assumptions).

Let us check Item~\ref{it:43} of Proposition~\ref{pr:4}. Fix any distinct $h,i,j\in [3]$. For every quadruple $(w,x,y,z)\in V_h\times V_i^2\times V_j$ with $wx,yz\in L$ such that, additionally, we have $x\not=y$, at least one of the triples $wxz$ and $wzy$ is missing from $\C H$ as otherwise we get a copy of $C_{5}^{3-}$ visiting vertices $vyzwx$ in this cyclic order (having edges $vyz,yzw,zwx,wxv$). On the other hand, each edge in $\mathcal{K}[V_1,V_2,V_3]\setminus \C H$ appears at most $n$ times this way. (Indeed, we have to pick an extra vertex in the part $V_i$ of size at most $n/2$ and may also have two choices whether it plays the role of $x$ or $y$.) Since $|\C K[V_1,V_2,V_3]\setminus \C H|\le \delta n^3$, the number of quadruples $(w,x,y,z)$ in Proposition~\ref{pr:4}.\ref{it:43}  is at most $\delta n^3\cdot n$ (those with $x\not=y$) plus $n^3$ (those with $x=y$), giving the desired by $n\ge n_0\gg 1/\delta$.

Let us check Item~\ref{it:43} of Proposition~\ref{pr:4}. Fix any distinct $i,j\in [3]$. For each triple $(x,y,z)\in V_i^2\times V_j$ with $xy,yz\in L$ and for every choice of $w$ in the remaining part, at least one of the triples $wxz$ and $wyz$ is missing from $\C H$ for otherwise we have a copy of $C_{5}^{3-}$ on $wzxvy$ (with edges $wzx,zxv,xvy,ywz$). On the other hand, each edge in $\mathcal{K}[V_1,V_2,V_3]\setminus \C H$ appears  at most $n$ times this way. A similar calculation as before shows that there are at most $2\delta n^3$ such triples $(x,y,z)$ in total, as desired.

Thus, Proposition~\ref{pr:4} applies and gives the desired conclusion.\end{proof}

\subsection{Some remarks on our implementation}

%Let us finish this section with some remarks on the implementation on computer.

There was some freedom in choosing constants that still make the whole proof work. It would be sufficient to prove the weaker versions of Propositions~\ref{pr:2} and~\ref{pr:3} where $\be2:=1/4$ (with some extra arguments) and $\be3:=\al2$ respectively.
%In view of possible applications of $C_{5}^{3-}$-free $3$-graphs to other problems (see~\cite{HLLYZ23,DHLY24}), 
We decided to present somewhat stronger versions of these intermediate results, obtained by experimenting on computer. For example, $\be3:=0.185$ did not seem to suffice in Proposition~\ref{pr:3} (nor $\be2:=\frac14-10^{-4}$ in Proposition~\ref{pr:2}) so we opted for the current values that are close to optimal and simple to write. The stated values for $\al1$ and $\al2$ are exactly the constants returned by our rounding calculations.

Likewise,  we decided to use only the necessary assumptions in the flag proofs for  Propositions~\ref{pr:1}--\ref{pr:4} (although we did not make any attempt to reduce the set of used types), even though this sometimes resulted in slightly weaker intermediate bounds. 
For example, we could have improved slightly the ``plain'' upper bound on $\pi(C_{5}^{3-})$ coming from Propositions~\ref{pr:1} if we have applied the flag algebra proof to a limit of a convergent subsequence of maximum $\{C_{5}^{3-},K_{4}^{3-}\}$-free $3$-graphs of order $n\to\infty$, 
where we could have additionally assumed that every two vertex degrees differ by at most $n-2=o(n^2)$ and each degree is at least $(\frac14+o(1)) {n\choose 2}$.

Some extra steps were required in Proposition~\ref{pr:3} during \textbf{rounding} (when, given the floating-point matrices produced by an SDP solver, we have to find matrices with rational coefficients that certify the validity of the bound). The issue here is that the inequality $|B|\le \frac{99}{100}|M|$ is in a sense attained by many feasible configurations (in addition to varying the edges inside parts): namely, we can pick a partition $[n]=V_1\cup V_2\cup V_3$ with any part ratios $x_i\coloneqq |V_i|/n$ as long as $\prod_{i=1}^3|V_i|\ge \be3\,\frac{n^3}6$ and take $\C H\coloneqq \mathcal{K}[V_1,V_2,V_3]$ (when $B=M=\emptyset$). As it is well known, any asymptotically extremal construction forces various relations in the certificate (that are of the form that some inequalities have zero slack and that some positive semi-definite matrices must have specific zero eigenvectors). We do not dwell on the exact nature of these restrictions (since the task of verifying the certificates does not require any knowledge of them) but refer the reader to e.g.~\cite[Section~3.1]{PikhurkoVaughan13} for a general discussion. We observe that these constraints in Proposition~\ref{pr:2} amount to a system  of explicit polynomial equations being satisfied for all $(x_1,x_2)$ in some open neighbourhood of $(\frac13,\frac13)$. Of course, this implies that all these polynomials are identically~0. We used the equivalent reformulation (that was easier to implement in the current code) that all partial derivatives of each polynomial (up to the maximum degree) vanish at  $(\frac13,\frac13)$.
%Our implementation uses the last property in function \texttt{derivatives()}. 

%%%%%%%%%%%%%%%%%%%%%%%%%%%%%%%%%%%%%%%%%%
\section{Proof of Theorem~\ref{THM:turan-density-C5-}}\label{SEC:Proof-turan-density-C5-}

We present the proof of Theorem~\ref{THM:turan-density-C5-} in this section. Here (and in the subsequent sections), we will be rather loose with the constants in the lower-order terms. 

The following lemma will be crucial for the proof. 
\begin{lemma}\label{LEMMA:recursion-upper-bound}
    There exists a non-increasing function $N_{\ref{LEMMA:recursion-upper-bound}} \colon (0,1) \to \mathbb{N}$ such that  the following holds for every $\xi > 0$ and for every $n \ge N_{\ref{LEMMA:recursion-upper-bound}}(\xi)$. 
    Suppose that $\mathcal{H}$ is an $n$-vertex $C_{5}^{3-}$-free $3$-graph with at least $\left(\frac{1}{4} - \frac{1}{10^7}\right)\binom{n}{3}$ edges.
    Then there exists a partition $V_1 \cup V_2 \cup V_3 = V(\mathcal{H})$ such that $\frac{n}{5} \le |V_i| \le \frac{n}{2}$ for every $i \in [3]$, and \begin{align*}
        |\mathcal{H}|
        & \le |V_1||V_2||V_3| + \sum_{i\in [3]}|\mathcal{H}[V_i]| + \xi n^3 - \max\left\{\frac{|B|}{99},\,\frac{|M|}{100} \right\},
    \end{align*}
    where $B=B_{\C H}(V_1,V_2,V_3)$ and $M=M_{\C H}(V_1,V_2,V_3)$ are defined in \eqref{equ:def-bad-triple} and
    \eqref{equ:def-missing-triple} respectively.
    %where $B \coloneqq \left\{e\in \mathcal{H} \colon |e\cap V_i| = 2 \text{ for some } i \in [3]\right\}$.
\end{lemma}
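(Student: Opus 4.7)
Fix $\xi>0$, choose $\eta\ll\xi$ small, and let $n_0=N_{\ref{LEMMA:recursion-upper-bound}}(\xi)$ be large. My strategy is to reduce to the $\{C_{5}^{3-},K_{4}^{3-}\}$-free setting and then invoke Propositions~\ref{pr:2} and~\ref{pr:3}.

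First, I would extract from $\mathcal{H}$ a $\{C_{5}^{3-},K_{4}^{3-}\}$-free subgraph $\mathcal{H}'\subseteq\mathcal{H}$ with $|\mathcal{H}\setminus\mathcal{H}'|\le\eta n^3$. This rests on the identity $\pi(C_{5}^{3-})=\pi(\{C_{5}^{3-},K_{4}^{3-}\})$ from~\eqref{eq:K4}: the homomorphism $C_{5}^{3-}\to K_{4}^{3-}$ implies that any copy of $K_{4}^{3-}$ in $\mathcal{H}$ together with a suitable extension vertex would complete a $C_{5}^{3-}$, so the $C_{5}^{3-}$-freeness of $\mathcal{H}$ forces it to contain only $o(n^4)$ copies of $K_{4}^{3-}$; these can all be destroyed by deleting at most $\eta n^3$ edges via the hypergraph Removal Lemma.

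Next, for $\eta$ small enough one has $|\mathcal{H}'|\ge(\tfrac14-10^{-7}-6\eta)\binom{n}{3}\ge\be2\binom{n}{3}$, so Proposition~\ref{pr:2} yields a partition $V(\mathcal{H})=V_1\cup V_2\cup V_3$ with $|\mathcal{H}'\cap\mathcal{K}[V_1,V_2,V_3]|\ge\al2\,n^3/6$. Replacing this partition by one maximising the transversal count ensures it is locally maximal for $\mathcal{H}'$. From $|V_1||V_2||V_3|\ge\al2\,n^3/6$ and $|V_1|+|V_2|+|V_3|=n$, I read off the part-size bounds $n/5\le|V_i|\le n/2$: writing $x=|V_i|/n$ for the smallest or largest part and setting $f(x)=x(1-x)^2/4$, the inequality $f(x)\ge\al2/6$ fails whenever $x<1/5$ or $x>1/2$, since $\al2/6$ exceeds both $f(1/5)=4/125$ and $f(1/2)=1/32$. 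Because $\al2>\be3=19/100$, Proposition~\ref{pr:3} then applies to $(\mathcal{H}',V_1,V_2,V_3)$ and gives $|B_{\mathcal{H}'}|\le\tfrac{99}{100}|M_{\mathcal{H}'}|$.

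Finally, I would transfer this back to $\mathcal{H}$. Since $\mathcal{H}'\subseteq\mathcal{H}$ with $|\mathcal{H}\setminus\mathcal{H}'|\le\eta n^3$, we have $|B_{\mathcal{H}}|\le|B_{\mathcal{H}'}|+\eta n^3$ and $|M_{\mathcal{H}'}|\le|M_{\mathcal{H}}|+\eta n^3$, hence $|B_{\mathcal{H}}|\le\tfrac{99}{100}|M_{\mathcal{H}}|+2\eta n^3$. Substituting into the identity $|\mathcal{H}|=|V_1||V_2||V_3|-|M_{\mathcal{H}}|+|B_{\mathcal{H}}|+\sum_{i\in[3]}|\mathcal{H}[V_i]|$ and choosing $\eta$ so that $2\eta\le\tfrac{99}{100}\xi$ delivers the lemma's bound. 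The hard part, in my view, is the very first step: quantifying the supersaturation/removal argument so that the loss $\eta n^3$ fits simultaneously under the tolerance $\xi n^3$ of the conclusion and under the density slack $\be2-(\tfrac14-10^{-7})\le 10^{-5}$ required by Proposition~\ref{pr:2}.
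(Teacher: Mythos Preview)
Your proof is correct and mirrors the paper's approach: pass to a $\{C_{5}^{3-},K_{4}^{3-}\}$-free subgraph via the Hypergraph Removal Lemma, take a maximising (hence locally maximal) partition, invoke Propositions~\ref{pr:2} and~\ref{pr:3}, and then transfer the inequality $|B'|\le\tfrac{99}{100}|M'|$ back to~$\mathcal{H}$. The removal step you flag as hard is standard and handled exactly the same way in the paper, with the edge loss absorbed into the $\xi n^3$ slack; your worry about the density margin is unfounded since $\be2<\tfrac14-10^{-7}$.
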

\begin{proof}[Proof of Lemma~\ref{LEMMA:recursion-upper-bound}] It is enough to show that, for each sufficiently large integer $m$, say $m\ge m_0$, there is $n_0(m)$ such that
the conclusion holds when $\xi=\frac1m$ and $n\ge n_0(m)$, as then we can take, for example, $N_{\ref{LEMMA:recursion-upper-bound}}(x):=\max\{ n_0(m)\colon m_0\le m\le \lceil{1/x}\rceil\}$ for $x\in (0,1)$.

Fix a sufficiently large $m$ (in particular, assume that $m>10^8$), let $\xi:=1/m$ and then let $n$ be sufficiently large. 
Let $\mathcal{H}$ be a $C_{5}^{3-}$-free $3$-graph on $n$ vertices with at least $\left(\frac{1}{4} - \frac{1}{10^7}\right)\binom{n}{3}$ edges. Let $V \coloneqq V(\mathcal{H})$.
    By the Hypergraph Removal Lemma (see e.g.~\cite{RS04,NRS06,Gow07}), $\mathcal{H}$ contains a $\{K_{4}^{3-}, C_{5}^{3-}\}$-free subgraph $\mathcal{G}$ on $V$ with at least $|\mathcal{H}| - \frac{\xi n^3}{3} > \left(\frac{1}{4} - \frac{1}{10^6}\right)\frac{n^3}{6}$ edges.      
    Let $V(\mathcal{G}) = V_1\cup V_2\cup V_3$ be a partition such that $|\mathcal{G}[V_1, V_2, V_3]|$ is maximized.  
    Notice that it suffices to show that 
    \begin{align*}
        |\mathcal{G}|
        & \le |V_1||V_2||V_3| + \sum_{i\in [3]}|\mathcal{G}[V_i]| - \max\left\{\frac{|B|}{99},\,\frac{|M|}{100} \right\},
    \end{align*}
    where we re-define $B \coloneqq B_{\mathcal{G}}(V_1, V_2, V_3)$ and $M \coloneqq M_{\mathcal{G}}(V_1, V_2, V_3)$. 
    %$B \coloneqq \left\{e\in \mathcal{G} \colon |e\cap V_i| = 2 \text{ for some } i \in [3]\right\}$. 

% Note from the maximality of $|\mathcal{G}[V_1, V_2, V_3]|$ that the following holds$\colon$ 
% %
% \begin{enumerate}[label=(\roman*)]
%         \item for every $v\in V_1$ we have 
%         \begin{align*}
%             |L_{\mathcal{G}}(v) \cap (V_{2} \times V_{3})|
%             \ge \max\left\{|L_{\mathcal{G}}(v) \cap (V_{1} \times V_{2})|,\  |L_{\mathcal{G}}(v) \cap (V_{1} \times V_{3})|\right\},
%         \end{align*}
%         \item for every $v\in V_2$ we have 
%         \begin{align*}
%             |L_{\mathcal{G}}(v) \cap (V_{1} \times V_{3})|
%             \ge \max\left\{|L_{\mathcal{G}}(v) \cap (V_{2} \times V_{1})|,\  |L_{\mathcal{G}}(v) \cap (V_{2} \times V_{3})|\right\},
%         \end{align*}
%         \item and for every $v\in V_3$ we have 
%         \begin{align*}
%             |L_{\mathcal{G}}(v) \cap (V_{1} \times V_{2})|
%             \ge \max\left\{|L_{\mathcal{G}}(v) \cap (V_{3} \times V_{1})|,\  |L_{\mathcal{G}}(v) \cap (V_{3} \times V_{2})|\right\}.
%         \end{align*}
%     \end{enumerate}
%
\hide{
\Qn{Add details for removing vertices of too small degree.} 
\xl{Proposition~\ref{pr:Flag-3-parts} can now be replaced with `average degree at least $\left(\frac{1}{4} - \frac{1}{10^6}\right)$'. Do we want to use the average-degree version? Suppose we use the minimum-degree version. Then in order to show that the set of small degree vertices is small, we would first need to prove Theorem~\ref{THM:turan-density-C5-} i.e. $\pi(C_{5}^{3-}) \le 1/4$. However, the current proof of Theorem~\ref{THM:turan-density-C5-} uses this lemma. We can work around this by using the fact that every extremal $C_{5}^{3-}$-free $3$-graph has large minimum degree. So the structure of the proof would be: prove Theorem~\ref{THM:turan-density-C5-} first, then prove this lemma. There will be some repetition in both proofs, although not exactly the same.}
}
Let $x_i \coloneqq |V_i|/n$ for $i \in [3]$. 
Since the partition $\{V_1, V_2, V_3\}$ is also locally maximal with respect to $\C G$, Proposition~\ref{pr:2} implies that $|\mathcal{G}[V_1,V_2,V_3]| \ge \al2 n^3$, where $\al2$ was defined in~\eqref{eq:al2}. From $\al2\ge \max\{\frac15\cdot \frac25\cdot\frac25, \frac12\cdot \frac14\cdot\frac14\}$, it follows that $x_i \in [1/5, 1/2]$ for every $i \in [3]$. 
Additionally, it follows from Proposition~\ref{pr:3} that $|B| \le \frac{99}{100}\, |M|$. 
%$|B| \le \frac{99}{100} |M| + \frac{\xi n^3}{4}$. 
Consequently,  
    % \begin{align}\label{equ:bad-vs-missing}
    %     |M| - |B|
    %     \ge \max\left\{\frac{|M|}{100} - \frac{\xi n^3}{4},\,\frac{|B|}{99} - \frac{25 \xi n^3}{99}\right\}. 
    % \end{align}
    \begin{align}\label{equ:bad-vs-missing}
        |M| - |B|
        \ge \frac{|M|}{100}=\max\left\{\frac{|B|}{99},\,\frac{|M|}{100}\right\}. 
    \end{align}
It follows that 
% \begin{align*}%\label{equ:recursion} 
%     |\mathcal{G}|
%     & = |V_1||V_2||V_3| - |M| + |B|  + \sum_{i\in [3]}|\mathcal{G}[V_i]| \notag \\
%     & \le |V_1||V_2||V_3| + \sum_{i\in [3]}|\mathcal{G}[V_i]| - \max\left\{\frac{|M|}{100} - \frac{\xi n^3}{4},\,\frac{|B|}{99} - \frac{25 \xi n^3}{99}\right\} \notag \\
%     & \le |V_1||V_2||V_3| + \sum_{i\in [3]}|\mathcal{G}[V_i]| - \max\left\{\frac{|M|}{100},\,\frac{|B|}{99}\right\} + \frac{\xi n^3}{3}. 
% \end{align*}
\begin{align*}%\label{equ:recursion} 
    |\mathcal{G}|
    & = |V_1||V_2||V_3| - |M| + |B|  + \sum_{i\in [3]}|\mathcal{G}[V_i]|  \\
    & \le |V_1||V_2||V_3| + \sum_{i\in [3]}|\mathcal{G}[V_i]| - \max\left\{\frac{|B|}{99},\,\frac{|M|}{100}\right\}.
\end{align*}
This completes the proof of Lemma~\ref{LEMMA:recursion-upper-bound}. 
\end{proof}

Next, we prove Theorem~\ref{THM:turan-density-C5-}. 
\begin{proof}[Proof of Theorem~\ref{THM:turan-density-C5-}]  Let $\alpha \coloneqq \pi(C_{5}^{3-})$. The $T_{\mathrm{rec}}$-construction from the Introduction shows that $\alpha\ge \frac14$.
    Fix an arbitrarily small $\xi > 0$ and then let $n$ be sufficiently large. 
    Let $\mathcal{H}$ be an $n$-vertex $C_{5}^{3-}$-free $3$-graph with $\mathrm{ex}(n,C_{5}^{3-})$ edges, i.e., the maximum possible size. Since $\alpha \ge \frac{1}{4}$, we have  $|\mathcal{H}| > \left(\frac{1}{4} - \frac{1}{10^7}\right)\binom{n}{3}$. 
    By Lemma~\ref{LEMMA:recursion-upper-bound}, there exists a partition $V_1 \cup V_2 \cup V_3 = V(\mathcal{H})$ such that $\frac{n}{5} \le |V_i| \le \frac{n}{2}$ for every $i \in [3]$, and 
    \begin{align}\label{equ:recursion-a}
        |\mathcal{H}|
        \le |V_1||V_2||V_3| + \sum_{i\in [3]}|\mathcal{H}[V_i]| + \xi n^3.
    \end{align}
    % where $B \coloneqq \left\{e\in \mathcal{H} \colon |e\cap V_i| = 2 \text{ for some } i \in [3]\right\}$.
    Let $x_i \coloneqq |V_i|/n$ for $i \in [3]$. 
    We can choose $n$ sufficiently large in the beginning such that  
    \begin{align}\label{equ:turan-number-concentrate}
       \left|\mathrm{ex}(N,C_{5}^{3-}) - \alpha \, \frac{N^3}{6} \right| 
        \le \xi\,\frac{N^3}6,
        \quad\text{for every}~N \ge \frac{n}{5}. 
    \end{align}
    Therefore, it follows from~\eqref{equ:recursion-a} that 
    \begin{align*}
        (\alpha - \xi) \frac{n^3}{6}
        \le x_1x_2x_3 n^3 
            + \sum_{i\in [3]} (\alpha + \xi) \frac{(x_in)^3}{6}
            + \xi n^3.
    \end{align*}
    Combining this with Fact~\ref{FACT:ineqality}, we obtain 
    \begin{align*}
        \alpha 
        \le \frac{6x_1x_2x_3 + (7+x_1^3+x_2^3+x_3^3)\xi}{1-(x_1^3+x_2^3+x_3^3)}
        \le \frac{6x_1x_2x_3}{1-(x_1^3+x_2^3+x_3^3)} + 10 \xi 
        \le \frac{1}{4}+ 10 \xi. 
    \end{align*}
Letting $\xi \to 0$, we obtain $\pi(C_{5}^{3-}) = \alpha \le \frac{1}{4}$.
\end{proof}

%%%%%%%%%%%%%%%%%%%%%%%%%%%%%%%%%%%%%%%%%%
\section{Proof of Theorem~\ref{THM:C5Minus-stability}}\label{SEC:Proof-C5-stability}
In this section, we prove Theorem~\ref{THM:C5Minus-stability}. We will begin by establishing the following weaker form of stability. 
\begin{lemma}\label{LEMMA:weak-stability}
    % There are $\xi^*>0$ and $N_0$ such that the following holds for every $\xi \in (0,\chi^*)$  and every $n \ge N_0$. 
    % There exists a constant $\xi^{\ast} > 0$ such that for every $\xi \in (0,\xi^{\ast})$, 
    There exists a non-increasing function $N_{\ref{LEMMA:weak-stability}} \colon (0,1) \to \mathbb{N}$ such that the following holds for every $\xi \in (0, 10^{-8})$ and every $n \ge N_{\ref{LEMMA:weak-stability}}(\xi)$. 
    Suppose that $\mathcal{H}$ is an $n$-vertex $C_{5}^{3-}$-free $3$-graph with at least $(1/24 - \xi) n^3$ edges.
    Then there exists a partition $V_1 \cup V_2 \cup V_3 = V(\mathcal{H})$ such that 
    \begin{enumerate}[label=(\roman*)]
        \item\label{LEMMA:weak-stability-1}  $\left||V_i| - n/3\right| \le 8 \xi^{1/2} n$ for each $i \in [3]$,
        \item\label{LEMMA:weak-stability-2} $\max\left\{|B|,~|M|\right\} \le 300 \xi n^3$, where $B=B_{\mathcal{H}}(V_1,V_2,V_3)$ and $M=M_{\mathcal{H}}(V_1,V_2,V_3)$ were defined in~\eqref{equ:def-bad-triple} and~\eqref{equ:def-missing-triple},  and
        \item\label{LEMMA:weak-stability-3} $|\mathcal{H}[V_i]| \ge \left(1/24 - 500\xi \right) |V_i|^3$ for each $i \in [3]$.
    \end{enumerate}
\end{lemma}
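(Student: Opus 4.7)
The plan is to derive all three conclusions from a single key inequality that combines Lemma~\ref{LEMMA:recursion-upper-bound} (the recursive upper bound) with Theorem~\ref{THM:turan-density-C5-} (the asymptotic Tur\'an density) and Fact~\ref{FACT:ineqality}(iii) (the quantitative simplex inequality). Pick an auxiliary parameter $\xi'$ with $\xi'\le \xi$ and take $n$ large enough. Since $(1/24-\xi)n^3>(1/4-10^{-7})\binom{n}{3}$ for $\xi<10^{-8}$, Lemma~\ref{LEMMA:recursion-upper-bound} applies with parameter $\xi'$ and produces a partition $V(\mathcal{H})=V_1\cup V_2\cup V_3$ with $x_i:=|V_i|/n\in[1/5,1/2]$ satisfying
\begin{equation*}
|\mathcal{H}|\le x_1x_2x_3\,n^3+\sum_{i\in[3]}|\mathcal{H}[V_i]|+\xi'n^3-\max\{|B|/99,\,|M|/100\}.
\end{equation*}
Theorem~\ref{THM:turan-density-C5-} gives $|\mathcal{H}[V_i]|\le(1/24+\xi')(x_in)^3$ for each $i$ (using $|V_i|\ge n/5$). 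Combining these with the hypothesis $|\mathcal{H}|\ge(1/24-\xi)n^3$ and dividing by $n^3$ yields the key inequality
\begin{equation*}
\frac{1}{24}-\xi\;\le\;\left(x_1x_2x_3+\frac{x_1^3+x_2^3+x_3^3}{24}\right)+2\xi'-\frac{1}{n^3}\max\left\{\frac{|B|}{99},\frac{|M|}{100}\right\}.
\end{equation*}

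For items \ref{LEMMA:weak-stability-1} and \ref{LEMMA:weak-stability-2}, Fact~\ref{FACT:ineqality}(iii) says that on the region $\{x_i\in[1/5,1/2]\}$ the parenthesised expression is at most $1/24-\varepsilon^2/16$ whenever $\max_i|x_i-1/3|\ge \varepsilon\in[0,1/10]$. The key inequality first rules out $\max_i|x_i-1/3|\ge 1/10$ (that would imply $1/1600\le \xi+2\xi'<10^{-7}$, absurd), so Fact~\ref{FACT:ineqality}(iii) applies with $\varepsilon=\max_i|x_i-1/3|$ and, after rearrangement, delivers
\begin{equation*}
\frac{(\max_i|x_i-1/3|)^2}{16}+\frac{1}{n^3}\max\left\{\frac{|B|}{99},\frac{|M|}{100}\right\}\;\le\;\xi+2\xi'\;\le\;3\xi.
\end{equation*}
The first term gives $\max_i|x_i-1/3|\le\sqrt{48\xi}<8\xi^{1/2}$ (item \ref{LEMMA:weak-stability-1}), and the second gives $\max\{|B|,|M|\}\le 300\xi\,n^3$ (item \ref{LEMMA:weak-stability-2}).

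For item~\ref{LEMMA:weak-stability-3}, suppose for contradiction that $|\mathcal{H}[V_j]|<(1/24-500\xi)(x_jn)^3$ for some $j\in[3]$. Starting from the identity $|\mathcal{H}|=|V_1||V_2||V_3|-|M|+|B|+\sum_i|\mathcal{H}[V_i]|$, using $|M|\ge|B|$ (from Proposition~\ref{pr:3}), bounding the other $|\mathcal{H}[V_i]|$ via Theorem~\ref{THM:turan-density-C5-}, and invoking the first part of Fact~\ref{FACT:ineqality}(iii), one obtains $-\xi\le -500\xi\,x_j^3+\xi'$. Item~\ref{LEMMA:weak-stability-1} provides $x_j\ge 1/3-8\xi^{1/2}>3/10$ for $\xi<10^{-8}$, so this would force $13.5\,\xi\le 2\xi$, a contradiction. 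The main obstacle is nothing conceptual; it is just coordinating the constants $8$, $300$, $500$ and the auxiliary parameter $\xi'$ so that all three items emerge cleanly from the same key inequality. All the real work has already been absorbed into Lemma~\ref{LEMMA:recursion-upper-bound} and Theorem~\ref{THM:turan-density-C5-}.
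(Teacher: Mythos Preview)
Your approach is essentially the same as the paper's: feed Lemma~\ref{LEMMA:recursion-upper-bound} into Fact~\ref{FACT:ineqality}(iii), use Theorem~\ref{THM:turan-density-C5-} to bound each $|\mathcal H[V_i]|$, and read off all three conclusions from the resulting inequality. The constants and the organisation match.

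There is one genuine misstep, in your argument for item~\ref{LEMMA:weak-stability-3}. You invoke Proposition~\ref{pr:3} to conclude $|M|\ge |B|$ for $\mathcal H$, but Proposition~\ref{pr:3} does not apply here: it requires the $3$-graph to be $\{C_5^{3-},K_4^{3-}\}$-free \emph{and} the partition to be locally maximal, whereas $\mathcal H$ is only $C_5^{3-}$-free and the partition produced by Lemma~\ref{LEMMA:recursion-upper-bound} is locally maximal only for the auxiliary $K_4^{3-}$-free subgraph $\mathcal G$ obtained via the Removal Lemma inside that lemma's proof, not for $\mathcal H$ itself. The fix is immediate and is exactly what the paper does: you already have from Lemma~\ref{LEMMA:recursion-upper-bound} (dropping the $\max$ term) that
\[
|\mathcal H|\le |V_1||V_2||V_3|+\sum_{i\in[3]}|\mathcal H[V_i]|+\xi' n^3,
\]
which is all you need in place of the identity-plus-$|M|\ge|B|$ step. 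Substituting the assumed bound on $|\mathcal H[V_j]|$ then yields $-\xi\le -500\xi\,x_j^3+\xi'\sum_{i\ne j}x_i^3+\xi'$, and your contradiction goes through with room to spare (the paper even uses only the cruder bound $x_j\ge 1/5$ here). So the appeal to Proposition~\ref{pr:3} is both unjustified and unnecessary; once removed, your proof is correct and coincides with the paper's.
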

\begin{proof}[Proof of Lemma~\ref{LEMMA:weak-stability}]
%
% Fix a sufficiently small $\xi^*>0$.
% %and sufficiently large $N_0$. 
% As in the proof of Lemma~\ref{LEMMA:recursion-upper-bound}, it is enough to show that the conclusion holds for any fixed $\xi\in (0,10^{-8})$ provided $n$ is sufficiently large. 
As in the proof of Lemma~\ref{LEMMA:recursion-upper-bound}, there is a non-increasing function $N':(0,1)\to \mathbb{N}$ such that~\eqref{equ:turan-number-concentrate} holds for every $\xi\in (0,1)$ and $n\ge N'(\xi)$.   
Let 
 $$N_{\ref{LEMMA:weak-stability}}(\xi):=\max(N'(\xi), N_{\ref{LEMMA:recursion-upper-bound}}(\xi)),\quad\mbox{for $\xi\in (0,10^{-8})$}.
 $$
Take any $\xi\in (0,10^{-8})$ and $n\ge N_{\ref{LEMMA:weak-stability}}(\xi)$. Let $\C H$ be as in the lemma.
Let $V(\mathcal{H}) = V_1 \cup V_2 \cup V_3$ be the partition returned by Lemma~\ref{LEMMA:recursion-upper-bound}.
Let $x_i \coloneqq |V_i|/n$ for $i \in [3]$. 
It follows from Lemma~\ref{LEMMA:recursion-upper-bound} that $x_i \in [1/5, 1/2]$ for every $i \in [3]$ and 
    \begin{align}\label{equ:LEMMA:recursion-upper-bound-1}
        |\mathcal{H}|
        & \le |V_1||V_2||V_3| + \sum_{i\in [3]}|\mathcal{H}[V_i]| - \max\left\{\frac{|B|}{99},~\frac{|M|}{100}\right\} + \xi n^3 \\
        & \le x_1 x_2 x_3\, n^3 + \sum_{i\in [3]} \left(\frac{1}{4} + \xi \right) \frac{(x_in)^3}{6} + \xi n^3 
         \le \left(x_1 x_2 x_3 + \frac{x_1^3 + x_2^3 + x_3^3}{24}\right) n^3 + 2\xi n^3. \notag 
    \end{align}
If $\max_{i\in [3]}|x_i - 1/3| \ge 8 \xi^{1/2}$, then it follows from the inequality above and Fact~\ref{FACT:ineqality}~\ref{FACT:ineqality-3} that 
\begin{align*}
    |\mathcal{H}|
    \le \left(\frac{1}{24} -\frac{(8 \xi^{1/2})^2}{16}\right)n^3 + 2\xi n^3
    < \left(\frac{1}{24} - \xi\right)n^3, 
\end{align*}
contradicting the assumption that $|\mathcal{H}| \ge (1/24 - \xi)n^3$. 
This proves Lemma~\ref{LEMMA:weak-stability}~\ref{LEMMA:weak-stability-1}. 
% Let 
% \begin{align*}
%     B \coloneqq \left\{e\in \mathcal{H} \colon |e\cap V_i| = 2 \text{ for some } i \in [3]\right\}
%     \quad\text{and}\quad 
%     M \coloneqq \mathcal{K}[V_1, V_2, V_3] \setminus \mathcal{H}. 
% \end{align*}

Next, we prove Lemma~\ref{LEMMA:weak-stability}~\ref{LEMMA:weak-stability-2}. 
Suppose to the contrary that $|B| >  300 \xi n^{3}$ or $|M| > 300 \xi n^{3}$. 
%Let  $M=M_{\C H}(V_1,V_2,V_3)$ be the set of  missing edges, as defined in~\eqref{equ:def-missing-triple}. 
Similarly to the proof above, if follows from~\eqref{equ:LEMMA:recursion-upper-bound-1} and Fact~\ref{FACT:ineqality}~\ref{FACT:ineqality-3} that 
    \begin{align*}
        |\mathcal{H}|
        & \le x_1 x_2 x_3\, n^3 + \sum_{i\in [3]} \left(\frac{1}{4} + \xi \right) \frac{(x_in)^3}{6} - 3\xi n^3 + \xi n^3 \\
        & \le \left(x_1 x_2 x_3 + \frac{x_1^3 + x_2^3 + x_3^3}{24}\right) n^3 + 3\cdot \xi\, \frac{n^3}{6} - 3\xi n^3 + \xi n^3  
        < \frac{n^3}{24}  - \xi n^3, 
    \end{align*}
    contradicting the assumption that $|\mathcal{H}| \ge (1/24 - \xi)n^3$. 

    Next, we prove Lemma~\ref{LEMMA:weak-stability}~\ref{LEMMA:weak-stability-3}.
    Suppose to the contrary that $|\mathcal{H}[V_i]| \le (1/24 - 500\xi) |V_i|^3$ for some $i \in [3]$. 
    By symmetry, we may assume that $i =1$. 
    Then it follows from~\eqref{equ:LEMMA:recursion-upper-bound-1} and Fact~\ref{FACT:ineqality}~\ref{FACT:ineqality-3} that
        \begin{align*}
            |\mathcal{H}|
            & \le |V_1||V_2||V_3| + \sum_{i\in [3]}|\mathcal{H}[V_i]| +\xi n^3 \\
            & \le |V_1||V_2||V_3|  + \left(\frac{1}{24} - 500\xi \right) |V_1|^3 + \left(\frac{1}{24} +\xi \right) |V_2|^3 + \left(\frac{1}{24} + \xi \right) |V_3|^3 +\xi n^3\\
            & = x_1 x_2 x_3\, n^3 + \frac{1}{24}\left(x_1^3 + x_2^3 + x_3^3\right)n^3 - 500\xi (x_1 n)^3 + \xi(x_2 n)^3 + \xi(x_3 n)^3  + \xi n^3\\
            & \le \frac{n^3}{24} - 500\xi \left(\frac{n}{5}\right)^3 + \xi \left(\frac{n}{2}\right)^3 + \xi \left(\frac{n}{2}\right)^3  + \xi n^3
            < \frac{n^3}{24} - \xi n^3, 
        \end{align*}
        a contradiction. This completes the proof of Lemma~\ref{LEMMA:weak-stability}. 
\end{proof}%PROP

Now we are ready to prove Theorem~\ref{THM:C5Minus-stability}. 
\begin{proof}[Proof of Theorem~\ref{THM:C5Minus-stability}]
    Fix $\varepsilon > 0$. We may assume that $\varepsilon$ is sufficiently small. 
    Let $\delta \coloneqq \varepsilon^{11}/1200$. Let $n$ be sufficiently large; in particular, we can assume that 
    $$\e n\ge 
    \max\left\{N_{\ref{LEMMA:recursion-upper-bound}}(\delta),\,N_{\ref{LEMMA:weak-stability}}(\delta)\right\},
    $$
where where $N_{\ref{LEMMA:recursion-upper-bound}}$ and $N_{\ref{LEMMA:weak-stability}}$ are the functions returned by Lemmas~\ref{LEMMA:recursion-upper-bound} and~\ref{LEMMA:weak-stability} respectively.

Let us prove by induction on $m$  that every $m$-vertex $C_{5}^{3-}$-free $3$-graph $\C H$ with $m\le n$ and 
\begin{equation}\label{eq:MVertexH}
        |\mathcal{H}|
        \ge %\frac{m^3}{24} - \delta\left(\frac{n}{m}\right)^{10} m^3 =
         \left(\frac{1}{24} - \delta\left(\frac{n}{m}\right)^{10} \right)m^3 
    \end{equation}
can be transformed into a $T_{\mathrm{rec}}$-subconstruction by removing at most $\frac{600 \delta}{\varepsilon^{10}} m^3 + \frac{\varepsilon n^2 m}{6}$ edges.

    The base case $m < \varepsilon n$ is trivially true since $\binom{m}{3} \le \frac{\varepsilon n^2 m}{6}$, so we may assume that $m \ge  \varepsilon n$. 

    %Fix $\delta \in (0,\delta_{\ast})$. 
    Let $\mathcal{H}$ be an arbitrary $m$-vertex $C_{5}^{3-}$-free $3$-graph satisfying~\eqref{eq:MVertexH}. 
    %at least $m^3/24 - \delta (n/m)^{10} m^3$ edges. 
    Let $\xi \coloneqq \delta (n/m)^{10}$, noting that 
    %$\xi \ge \delta = \varepsilon^{11}/1200$ and $\xi \le \delta (n/\varepsilon n)^{10} \le \delta/\varepsilon^{10} = \varepsilon/1200 \ll 1$.
    \begin{align*}
        \xi 
        \ge \delta 
        = \frac{\varepsilon^{11}}{1200}
        \quad\text{and}\quad 
        \xi 
        \le \delta \left(\frac{n}{\varepsilon n}\right)^{10} 
        \le \frac{\delta}{\varepsilon^{10}} 
        = \frac{\varepsilon}{1200}
        \ll 1. 
    \end{align*}
    Additionally, we have
    \begin{align*}
        m 
        \ge \varepsilon n 
        \gg \max\left\{N_{\ref{LEMMA:recursion-upper-bound}}(\delta),\,N_{\ref{LEMMA:weak-stability}}(\delta)\right\} 
        \ge \max\left\{N_{\ref{LEMMA:recursion-upper-bound}}(\xi),\,N_{\ref{LEMMA:weak-stability}}(\xi)\right\}.
    \end{align*}

    Applying Lemma~\ref{LEMMA:weak-stability} to $\mathcal{H}$, we obtain a partition $V(\mathcal{H}) = V_1 \cup V_2 \cup V_3$ such that 
    \begin{enumerate}[label=(\roman*)]
        \item\label{item:Vi-size} $m/5\le |V_i|\le m/2$ for each $i \in [3]$, 
        \item\label{item:B-size} $|B| \le 300 \xi m^3 \le \frac{300\delta}{\varepsilon^{10}} m^3$, and  
        \item\label{item:H-Vi-size} $|\mathcal{H}[V_i]| \ge (1/24 - 500\xi) |V_i|^3$ for each $i \in [3]$.
    \end{enumerate}
    %
    % \begin{claim}\label{CLAIM:G-Vi-size}
    % For each $i \in [3]$, it holds that $|\mathcal{H}[V_i]| \ge (1/24 - 500\xi) |V_i|^3$.
    % \end{claim}
    % \begin{proof}[Proof of Claim~\ref{CLAIM:G-Vi-size}]
    %     Suppose to the contrary that $|\mathcal{H}[V_i]| \le (1/24 - 500\xi) |V_i|^3$ for some $i \in [3]$. 
    %     By symmetry, we may assume that $i =1$. 
    %     Then similarly to the proof of Theorem~\ref{THM:turan-density-C5-}, it follows from Lemma~\ref{LEMMA:recursion-upper-bound} and Fact~\ref{FACT:ineqality}~\ref{FACT:ineqality-3} that
    %     \begin{align*}
    %         |\mathcal{H}|
    %         & \le |V_1||V_2||V_3| + \sum_{i\in [3]}|\mathcal{H}[V_i]| +\xi m^3 \\
    %         & \le |V_1||V_2||V_3|  + \left(\frac{1}{24} - 500\xi \right) |V_1|^3 + \left(\frac{1}{24} +\xi \right) |V_2|^3 + \left(\frac{1}{24} + \xi \right) |V_3|^3 +\xi m^3\\
    %         & = x_1 x_2 x_3\, m^3 + \frac{1}{24}\left(x_1^3 + x_2^3 + x_3^3\right)m^3 - 500\xi (x_1m)^3 + \xi(x_2 m)^3 + \xi(x_3 m)^3  + \xi m^3\\
    %         & \le \frac{m^3}{24} - 500\xi \left(\frac{m}{5}\right)^3 + \xi \left(\frac{m}{2}\right)^3 + \xi \left(\frac{m}{2}\right)^3  + \xi m^3
    %         < \frac{m^3}{24} - \xi m^3, 
    %     \end{align*}
    %     a contradiction. 
    % \end{proof}%
    %
    %Let $x_i \coloneqq |V_i|$ for $i \in [3]$.
    For every $i \in [3]$, since $|V_i| \le m/2$, it follows from~\ref{item:H-Vi-size} that 
    \begin{align*}
        |\mathcal{H}[V_i]| 
        \ge \left(\frac{1}{24} - 500\xi \right) |V_i|^3
        & = \left(\frac{1}{24} - 500\delta \left(\frac{n}{m}\right)^{10}\right) |V_i|^3 \\
        & \ge \left(\frac{1}{24} - 500\delta \left(\frac{n}{2|V_i|}\right)^{10} \right) |V_i|^3 
         \ge \left(\frac{1}{24} - \delta \left(\frac{n}{|V_i|}\right)^{10}\right) |V_i|^3. 
    \end{align*}
    It follows from the inductive hypothesis that $\mathcal{H}[V_i]$ is a $T_{\mathrm{rec}}$-subconstruction after removing at most $600\cdot \frac{\delta}{\varepsilon^{10}} |V_i|^3+ \frac{\varepsilon n^2 |V_i|}{6}$ edges. 
    Therefore, $\mathcal{H}$ is a $T_{\mathrm{rec}}$-subconstruction after removing at most
    \begin{align*}
        |B| + \sum_{i\in [3]} \left(600 \cdot \frac{\delta}{\varepsilon^{10}} |V_i|^3+ \frac{\varepsilon n^2 |V_i|}{6}\right)
        & \le 300 \delta \left(\frac{n}{m}\right)^{10} m^3 + 3\cdot \frac{600\delta}{\varepsilon^{10}} \left(\frac{m}{2}\right)^3 + \frac{\varepsilon n^2 m}{6}  \\
        & \le  300 \delta \left(\frac{1}{\varepsilon}\right)^{10} m^3 + \frac{3}{8} \cdot \frac{600\delta}{\varepsilon^{10}}\, m^3  + \frac{\varepsilon n^2 m}{6}  \\
        & \le \frac{600 \delta}{\varepsilon^{10}}\, m^3 + \frac{\varepsilon n^2 m}{6}
    \end{align*}
    edges.
    This completes the proof for the inductive step. 
    
    Taking $m = n$, we obtain that every $n$-vertex $C_{5}^{3-}$-free $3$-graph with at least $n^3/24 - \delta n^3$ edges is a $T_{\mathrm{rec}}$-subconstruction after removing at most 
    %$\frac{600 \delta}{\varepsilon^{10}} n^3 + \frac{\varepsilon n^3}{6} \le \frac{\varepsilon n^3}{2} + \frac{\varepsilon n^3}{6} < \varepsilon n^3$ 
    \begin{align*}
        \frac{600 \delta}{\varepsilon^{10}} n^3 + \frac{\varepsilon n^3}{6} 
        \le \frac{\varepsilon n^3}{2} + \frac{\varepsilon n^3}{6} 
        < \varepsilon n^3
    \end{align*}
    edges.
    This proves Theorem~\ref{THM:C5Minus-stability}.
\end{proof}
%%%%%%%%%%%%%%%%%%%%%%%%%%%%%%%%%%%%%%%%%%%%

%%%%%%%%%%%%%%%%%%%%%%%%%%%%%%%%%%%%%%%%%%
\section{Proof of Theorem~\ref{THM:exact-level-one}}\label{SEC:proof-C5-exact}
In this section, we prove Theorem~\ref{THM:exact-level-one}. 
We will first establish an upper bound for the maximum degree of a nearly extremal $C_{5}^{3-}$-free $3$-graph and a lower bound for the minimum degree of an extremal $C_{5}^{3-}$-free $3$-graph. 
\begin{proposition}\label{PROP:C5minus-max-degree}  
    The following statements hold. 
    \begin{enumerate}[label=(\roman*)]
        \item\label{PROP:C5minus-max-degree-1} For every $\varepsilon > 0$ there exist $\delta>0$ and $n_0$ such that the following holds for all $n \ge n_0$. 
        Suppose that $\mathcal{H}$ is a $C_{5}^{3-}$-free $3$-graph on $n$ vertices with at least $\left(\frac{1}{24}-\delta\right)n^3$ edges. 
        Then $\Delta(\mathcal{H}) \le \left(\frac{1}{8} + \varepsilon^{1/5}\right) n^2$.
        \item\label{PROP:C5minus-max-degree-2} For every $\xi > 0$ there exists $n_1$ such that the following holds for every $n \ge n_1$. Suppose that $\mathcal{H}$ is a $C_{5}^{3-}$-free $3$-graph on $n$ vertices with exactly $\mathrm{ex}(n,C_{5}^{3-})$ edges. 
        Then $\delta(\mathcal{H}) \ge \left(\frac{1}{8} - 24\xi^{1/5}\right) n^2$.
    \end{enumerate}
\end{proposition}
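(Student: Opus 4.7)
The plan is to prove both parts by induction on $n$, leaning on Theorem~\ref{THM:C5Minus-stability}, Lemma~\ref{LEMMA:weak-stability}, Proposition~\ref{pr:3}, and Proposition~\ref{pr:5}.

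For Part~\ref{PROP:C5minus-max-degree-1}, given $\varepsilon>0$ take $\delta$ to be a small polynomial in $\varepsilon$ (chosen so that Lemma~\ref{LEMMA:weak-stability} applies with parameter $\xi$ of order $\varepsilon^{2/5}$). Apply the weak stability lemma to obtain a partition $V_1\cup V_2\cup V_3=V(\mathcal{H})$ with $|V_i|=n/3+O(\xi^{1/2}n)$, total bad and missing count $O(\xi n^3)$, and each $|\mathcal{H}[V_i]|\ge(1/24-500\xi)|V_i|^3$. After finitely many vertex moves I may assume the partition is locally maximal. Fix a vertex $v$ and place it in the part, say $V_1$, that maximises $|L(v)\cap\mathcal{K}[V_j,V_h]|$ for $\{j,h\}=[3]\setminus\{1\}$; local maximality then guarantees Item~\ref{it:54} of Proposition~\ref{pr:5}. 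If $|L(v)\cap\mathcal{K}^2[V_1,V_2,V_3]|<n^2/16$ then $d(v)$ is already well below $(1/8+\varepsilon^{1/5})n^2$, so assume otherwise and apply Proposition~\ref{pr:5} to get $|B_v|\le\frac{9}{10}|M_v|+\varepsilon''n^2$. Together with the identity $d(v)=|V_2||V_3|-|M_v|+|B_v|+I_v$, where $I_v=|L(v)\cap\binom{V_1}{2}|$, this gives $d(v)\le|V_2||V_3|+\varepsilon''n^2+I_v$. The subgraph $\mathcal{H}[V_1\cup\{v\}]$ has fewer than $n$ vertices and, by Lemma~\ref{LEMMA:weak-stability}~\ref{LEMMA:weak-stability-3}, density close to $1/4$, so the inductive hypothesis yields $I_v\le(1/8+\varepsilon^{1/5})(|V_1|+1)^2$. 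Since $|V_2||V_3|\approx n^2/9$ and $(|V_1|+1)^2/8\approx n^2/72$ sum to $n^2/8$, the residual error is $O((\xi^{1/2}+\varepsilon''+\varepsilon^{1/5}/9)n^2)$, which fits within $\varepsilon^{1/5}n^2$ provided $\xi^{1/2}$ and $\varepsilon''$ are smaller than a constant multiple of $\varepsilon^{1/5}$. The main obstacle is this careful bookkeeping of error terms.

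For Part~\ref{PROP:C5minus-max-degree-2}, I would first prove the stronger structural claim $|B|=|M|=0$ for the locally maximal stability partition of an extremal $\mathcal{H}$. The construction ``$\mathcal{K}[V_1,V_2,V_3]$ together with an extremal $C_{5}^{3-}$-free $3$-graph inside each $V_i$'' is $C_{5}^{3-}$-free (this is exactly the $T_{\mathrm{rec}}$-construction from the introduction), so $\mathrm{ex}(n,C_5^{3-})\ge|V_1||V_2||V_3|+\sum_i\mathrm{ex}(|V_i|,C_5^{3-})$. Combining with the identity $|\mathcal{H}|=|V_1||V_2||V_3|+|B|-|M|+\sum_i|\mathcal{H}[V_i]|\le|V_1||V_2||V_3|+|B|-|M|+\sum_i\mathrm{ex}(|V_i|,C_5^{3-})$ and extremality $|\mathcal{H}|=\mathrm{ex}(n,C_5^{3-})$ yields $|B|\ge|M|$. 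The locally maximal partition has transversal-edge density at least about $n^3/27-O(\xi n^3)$, which for small $\xi$ is well above the threshold $\beta_{\ref{pr:3}}n^3/6=19n^3/600$ required by Proposition~\ref{pr:3}; the latter then gives $|B|\le\frac{99}{100}|M|$. Together these force $|B|=|M|=0$, and equality throughout also gives $|\mathcal{H}[V_i]|=\mathrm{ex}(|V_i|,C_5^{3-})$ for every $i$.

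Now, given $|B|=|M|=0$, for any $v\in V_1$ one has $|M_v|=|B_v|=0$, so $d(v)=|V_2||V_3|+I_v$. Since $\mathcal{H}[V_1]$ is extremal on $|V_1|\ge n/3-O(\xi^{1/2}n)$ vertices, the inductive hypothesis on Part~\ref{PROP:C5minus-max-degree-2} gives $\delta(\mathcal{H}[V_1])\ge(1/8-24\xi^{1/5})|V_1|^2$, whence $I_v\ge n^2/72-O(\xi^{1/5}n^2)$. Combined with $|V_2||V_3|\ge n^2/9-O(\xi^{1/2}n^2)$, this yields $d(v)\ge n^2/8-O(\xi^{1/5}n^2)\ge(1/8-24\xi^{1/5})n^2$, as required. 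The main subtleties are choosing $\delta$ and $\xi$ small enough for the stability hypotheses at every inductive level, and verifying that the partition remains locally maximal with the right quantitative bounds when applied recursively to each $\mathcal{H}[V_i]$.
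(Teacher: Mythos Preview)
Your Part~\ref{PROP:C5minus-max-degree-1} outline assembles the right ingredients (Lemma~\ref{LEMMA:weak-stability} plus Proposition~\ref{pr:5}), but the induction on $n$ with a \emph{single} pair $(\varepsilon,\delta)$ cannot close. To apply Lemma~\ref{LEMMA:weak-stability} with parameter $\xi$ you need $\delta\le\xi$; but the lemma only returns $|\mathcal{H}[V_i]|\ge(1/24-500\xi)|V_i|^3$, so invoking the inductive hypothesis on $\mathcal{H}[V_1\cup\{v\}]$ with the \emph{same} $\delta$ requires $500\xi\le\delta$. These two constraints are incompatible, and this is exactly the ``bookkeeping'' you flag but do not resolve. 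The paper avoids this by abandoning induction on $n$: it fixes a recursion depth $\ell$ with $2^{-\ell}\approx\varepsilon^{1/2}$, chooses a finite hierarchy $\varepsilon_\ell\gg\delta_\ell\gg\cdots\gg\varepsilon_1\gg\delta_1\gg\delta$, and applies Lemma~\ref{LEMMA:weak-stability} once at each of the $\ell$ levels with the appropriate constant from the hierarchy. A backward finite induction on the level $i\in\{0,\dots,\ell\}$ (using Proposition~\ref{pr:5} at each step) then bounds $|L_{\mathcal{H}}(v)\cap\binom{V_{\mathbf{x}}}{2}|$; the base case $i=\ell$ is trivial because level-$\ell$ parts have at most $\varepsilon^{1/2}n$ vertices.

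Your Part~\ref{PROP:C5minus-max-degree-2} plan has a more basic gap: Proposition~\ref{pr:3} is stated only for $\{C_5^{3-},K_4^{3-}\}$-free $3$-graphs, and an extremal $C_5^{3-}$-free $\mathcal{H}$ is not known a priori to be $K_4^{3-}$-free. Passing to a $K_4^{3-}$-free subgraph via the Removal Lemma costs $o(n^3)$ edges, which destroys the exact comparison $|B|\ge|M|$ and leaves only $|B|,|M|=o(n^3)$; then $\mathcal{H}[V_i]$ is merely near-extremal and your inductive hypothesis (stated for exactly extremal graphs) does not apply. Note also that your induction has no base case: the step reduces $n$ to roughly $n/3$, so for $n\in[n_1,3n_1)$ there is nothing to invoke. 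The paper takes a completely different route, a deleting--duplicating argument: from the depth-$\ell$ structure in Part~\ref{PROP:C5minus-max-degree-1} it locates five vertices $v_1,\dots,v_5$ in a common level-$\ell$ part, each with $d_{\mathcal{H}_1}(v_j)\ge(1/8-o(1))n^2$; since all five have identical links in the ambient $T_{\mathrm{rec}}$-construction $\mathcal{T}$, inclusion--exclusion forces $\bigl|\bigcap_j L_{\mathcal{H}_1}(v_j)\bigr|\ge(1/8-o(1))n^2$. Replacing the link of a hypothetical low-degree vertex $u$ by this common link keeps the graph $C_5^{3-}$-free (any copy through $u$ uses only four other vertices, so some $v_j$ is free to substitute for $u$) and strictly increases the edge count, contradicting extremality.
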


Before proving Proposition~\ref{PROP:C5minus-max-degree}, let us introduce some useful definitions related to recursive constructions, as drawn from~\cite{PI14,LP22}. 

Recall from the definition that a $3$-graph $\mathcal{H}$ is a $T_{\mathrm{rec}}$-subconstruction if and only if there exists a partition $V(\mathcal{H}) = V_1 \cup V_2 \cup V_3$ such that 
\begin{enumerate}[label=(\roman*)]
    \item\label{it:T-rec-prop-1} $V_i \neq \emptyset$ for each $i \in [3]$, 
    \item\label{it:T-rec-prop-2}  $\mathcal{H} \setminus \bigcup_{i\in [3]}\mathcal{H}[V_i] \subseteq \mathcal{K}[V_1, V_2, V_3]$, and 
    \item\label{it:T-rec-prop-3} $\mathcal{H}[V_i]$ is a $T_{\mathrm{rec}}$-subconstruction for each $i \in [3]$. 
\end{enumerate}
We refer to a partition that satisfies these three properties as a \textbf{$T_{\mathrm{rec}}$-partition} of $\mathcal{H}$. 
We fix a $T_{\mathrm{rec}}$-partition $V(\mathcal{H}) = V_1 \cup V_2 \cup V_3$ of $\mathcal{H}$ 
%such that $\max_{i\in [3]}|V_i| - \min_{i\in [3]}|V_i|$ is minimized 
and call $V_1, V_2, V_3$ the \textbf{level-$1$} parts of $\mathcal{H}$. 

Similarly, for each $i \in [3]$, since the induced subgraph $\mathcal{H}[V_i]$ is again a $T_{\mathrm{rec}}$-subconstruction, there exists (and we fix one) a $T_{\mathrm{rec}}$-partition $V_i = V_{i, 1} \cup V_{i, 2} \cup V_{i, 3}$ for $\mathcal{H}[V_i]$. 
The parts $\left\{V_{i,j} \colon (i,j) \in [3]^2\right\}$ are called the \textbf{level-$2$} parts of $\mathcal{H}$. 

Inductively, for each level-$k$ part $V_{x_1, \ldots, x_{k}}$ of $\mathcal{H}$, fix the level-$1$ parts $V_{x_1, \ldots, x_{k},1}$, $V_{x_1, \ldots, x_{k},2}$, $V_{x_1, \ldots, x_{k},3}$ of the induced subgraph $\mathcal{H}[V_{x_1, \ldots, x_{k}}]$. 
Then the \textbf{level-$(k+1)$} parts of $\mathcal{H}$ are given by $\left\{V_{\mathbf{x},i} \colon (\mathbf{x},i) \in [3]^{k} \times [3]\right\}$.
This process terminates when all parts has size at most~$2$. Also, the part $V_{\emptyset}$ corresponding to the empty sequence is defined to be the whole vertex set $V(\C H)$. 

\hide{
We will also need the following fact that guarantees a $T_{\mathrm{rec}}$-partition with approximately equal part sizes. Note that we cannot just take  the level-1 partition of $\C H$ as it may have part sizes $n-o(n),o(n),o(n)$.

\begin{fact}\label{FACT:T-rec-property}
    For every $\varepsilon > 0$ there exist $\delta > 0$ and $N_0$ such that the following holds for every $n \ge N_0$. 
    Suppose that $\mathcal{H}$ is an $n$-vertex $T_{\mathrm{rec}}$-subconstruction with $|\mathcal{H}| \ge \left(\frac{1}{24} - \delta\right)n^3$. Then there exist a subgraph $\mathcal{G} \subseteq \mathcal{H}$ and a partition $V(\mathcal{H}) = V_1 \cup V_2 \cup V_3$ such that  $V(\C G)=V(\C H)$ and
    \begin{enumerate}[label=(\roman*)]
        \item $|G| \ge |\mathcal{H}| - \varepsilon n^3$, 
        \item $\left||V_i| - \frac{n}{3}\right| \le \varepsilon n$ for each $i \in [3]$, and 
        \item $V(\mathcal{H}) = V_1 \cup V_2 \cup V_3$ is a $T_{\mathrm{rec}}$-partition of $\mathcal{G}$.
    \end{enumerate}
\end{fact}
\begin{proof}[Sketch of proof of Fact~\ref{FACT:T-rec-property}.] Given $\e>0$, choose small $\xi\gg \delta>0$. Let $Z:=\{v\in V(\C H)\colon d_{\C H}(v)\le (\frac14-\xi){n\choose 2}\}$ and let $\C H$ be the 3-graph on $V:=V(\C G)$ consisting of edges of $\C H$ that are disjoint from $Z$. This results in only a negligible loss in the number of edges (see e.g. the calculation in~\cite[Claim~30.1]{LP22}). We can apply~{\cite[Lemma~20]{LP22}} to $\C G[V\setminus Z]$; this general result guarantees, under high minimum degree, that the level-1 parts have ``correct'' sizes. Now we can distribute the vertices of $Z$ arbitrarily among the parts. 
We omit the details here.\end{proof}
}

\begin{proof}[Proof of Proposition~\ref{PROP:C5minus-max-degree}] Let us show the first part (about the maximum degree).
    Fix any $\varepsilon > 0$. We can assume that $\varepsilon$ is sufficiently small. Let $\ell$ be the integer such $2^{-\ell} \in (\varepsilon^{1/2}/2, \varepsilon^{1/2}]$. 
   Next, let $\varepsilon_{\ell}\gg \delta_{\ell}\gg \cdots \gg \varepsilon_1 \gg \delta_1 \gg \delta$ be positive constants, each being sufficiently small depending on the previous constants and~$\varepsilon$. Let $n$ be sufficiently large and $\mathcal{H}$ be a $C_{5}^{3-}$-free $3$-graph on $n$ vertices with at least $(1/24 - \delta)n^3$ edges. 
    In particular, by repeatedly applying Lemma~\ref{LEMMA:weak-stability}, 
    we can ensure the existence of sets $V_{\mathbf{x}}$, indexed by sequences $\mathbf{x}$ over $[3]$ of length at most $\ell$, such that for every $i \in \{0,\dots, \ell-1\}$, every $(x_1, \ldots, x_i) \in [3]^{i}$, and every $j\in [3]$,  the following statements hold:
    \begin{enumerate}[label=(\roman*)]
        \item\label{proof-level-one-assump-0}  the sets $V_{x_1, \ldots, x_{i},1},V_{x_1, \ldots, x_{i},2}, V_{x_1, \ldots, x_{i},3}$ partition $V_{x_1, \ldots, x_{i}}$,
        \item\label{proof-level-one-assump-1} $\left||V_{x_1, \ldots, x_i,j}| - \frac{n}{3^{i+1}}\right| \le \varepsilon_{i+1} n$, 
        \item\label{proof-level-one-assump-2} $\left||V_{x_1, \ldots, x_i, j}| - \frac{|V_{x_1, \ldots, x_i}|}{3}\right| \le \frac{\varepsilon_{i+1}}{\varepsilon^2} |V_{x_1, \ldots, x_i}| \le \varepsilon |V_{x_1, \ldots, x_i}|$, 
        \item\label{proof-level-one-assump-3} $|\mathcal{H}[V_{x_1, \ldots, x_i}]| 
        \ge \left(\frac{1}{24} - \varepsilon_{i+1}\right)|V_{x_1, \ldots, x_i}|^3$. 
    \end{enumerate}

    Let $\mathcal{T}$ be the $T_{\mathrm{rec}}$-construction on $V$ of recursion depth\footnote{In other words, we first construct a $T_{\mathrm{rec}}$-construction and then remove all edges contained within level-$\ell$-parts.} $\ell$ such that, for each $i \in [\ell]$, the level-$i$ parts of $\mathcal{G}$ are exactly the sets in $\left\{V_{\mathbf{x}} \colon \mathbf{x} \in [3]^{i}\right\}$. Let $\mathcal{H}_1 \coloneqq \mathcal{H} \cap \mathcal{T}$, that is, $\C H_1$ is obtained form $\C H$ by removing all edges inside a level-$\ell$ part of $\C T$.
    % Let $\mathcal{H}_1 \subseteq \mathcal{H}$ be a $T_{\mathrm{rec}}$-subconstruction of maximum size such that for each $i \in [\ell]$, the level-$i$ parts of $\mathcal{H}_1$ are exactly sets in $\left\{V_{\mathbf{x}} \colon \mathbf{x} = (x_1, \ldots, x_i) \in [3]^{i}\right\}$. \op{I would define $\C T$ here instead in the second part and let $\C H_1:=\C H\cap \C T$.}
    Since $\varepsilon_{\ell}, \delta_{\ell},  \cdots,  \varepsilon_1, \delta_1, \delta$ are sufficiently small, we have by~\ref{proof-level-one-assump-0}--\ref{proof-level-one-assump-3} that
    %\op{these are not enough: eg random partitions whp satisfy them but fail (17)}
    \begin{align}\label{equ:max-degree-H1}
        |\mathcal{H}_1|
        \ge |\mathcal{H}| - \frac{\varepsilon n^3}{2}
        \ge \left(\frac{1}{24}- \delta\right)n^3 - \frac{\varepsilon n^3}{2}
        \ge \left(\frac{1}{24}- \varepsilon\right)n^3. 
    \end{align}
    Let $V \coloneqq V(\mathcal{H})$ and 
    \begin{align*}
        Z 
        \coloneqq \left\{v\in V \colon d_{\mathcal{H}_1}(v) \le \left(\frac{1}{8} - 2\varepsilon^{1/2}\right) n^2\right\}. 
    \end{align*}
    \begin{claim}\label{CLAIM:Z-upper-bound}
        We have $|Z| \le \varepsilon^{1/2} n$. 
    \end{claim}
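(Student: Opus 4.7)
The plan is a double-counting argument that compares the degree sum of $\mathcal{H}_1$ with that of the ambient $T_{\mathrm{rec}}$-construction $\mathcal{T}$. Since $\mathcal{H}_1 = \mathcal{H} \cap \mathcal{T} \subseteq \mathcal{T}$, summing $d_{\mathcal{T}}(v) - d_{\mathcal{H}_1}(v)$ over all $v$ gives $3(|\mathcal{T}| - |\mathcal{H}_1|) = 3|\mathcal{T} \setminus \mathcal{H}|$. The goal is to show that this total defect is at most $\varepsilon n^3$ while each $v \in Z$ contributes at least $\frac{3}{2}\varepsilon^{1/2} n^2$ to it, which will immediately yield $|Z| \le \frac{2}{3}\varepsilon^{1/2} n \le \varepsilon^{1/2} n$.

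First I would bound $d_{\mathcal{T}}(v)$ from below for every vertex $v$. If $v \in V_{x_1, \ldots, x_\ell}$, then writing $\{j_k, j_k', x_k\} = [3]$ for each $k$, we have
\begin{equation*}
    d_{\mathcal{T}}(v) = \sum_{k=1}^{\ell} |V_{x_1, \ldots, x_{k-1}, j_k}| \cdot |V_{x_1, \ldots, x_{k-1}, j_k'}|.
\end{equation*}
Using property~\ref{proof-level-one-assump-1} to bound each factor from below by $(3^{-k} - \varepsilon_k)n$, and noting that the truncation $1 - 9^{-\ell}$ of the geometric series $\sum_{k=1}^{\infty} 1/9^k = 1/8$ is at least $1 - \varepsilon^{3/2}$ because $\ell \ge \log_2(\varepsilon^{-1/2})$ gives $9^{\ell} \ge \varepsilon^{-(\log_2 9)/2} \ge \varepsilon^{-3/2}$, I obtain $d_{\mathcal{T}}(v) \ge (1/8 - \varepsilon^{1/2}/2)\,n^2$ after choosing the $\varepsilon_k$'s sufficiently small. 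Consequently, for every $v \in Z$,
\begin{equation*}
    d_{\mathcal{T}}(v) - d_{\mathcal{H}_1}(v) \ge \left(\frac{1}{8} - \frac{\varepsilon^{1/2}}{2}\right)n^2 - \left(\frac{1}{8} - 2\varepsilon^{1/2}\right)n^2 = \frac{3\varepsilon^{1/2}}{2}\,n^2.
\end{equation*}

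Next, I will bound $|\mathcal{T} \setminus \mathcal{H}|$ from above. The triples in $\mathcal{T} \setminus \mathcal{H}$ are precisely the transversal triples at some level $k \in [\ell]$ that are absent from $\mathcal{H}$. For each such $k$ and each level-$(k-1)$ part $V_{\mathbf{x}}$, property~\ref{proof-level-one-assump-3} ensures that $\mathcal{H}[V_{\mathbf{x}}]$ satisfies the hypothesis of Lemma~\ref{LEMMA:weak-stability}, whose conclusion~\ref{LEMMA:weak-stability-2} bounds the missing rainbow triples inside $V_{\mathbf{x}}$ by $300 \varepsilon_k |V_{\mathbf{x}}|^3$. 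Summing over levels and using the size estimates from \ref{proof-level-one-assump-1} (so that $\sum_{\mathbf{x} \in [3]^{k-1}} |V_{\mathbf{x}}|^3$ is of order $n^3/9^{k-1}$), I can make the total at most $\varepsilon n^3/3$ by choosing the hierarchy $\varepsilon_\ell \gg \delta_\ell \gg \cdots \gg \varepsilon_1$ sufficiently small.

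Combining the two bounds gives $\frac{3}{2}\varepsilon^{1/2} n^2 \cdot |Z| \le \sum_{v\in Z}(d_{\mathcal{T}}(v) - d_{\mathcal{H}_1}(v)) \le 3|\mathcal{T}\setminus\mathcal{H}| \le \varepsilon n^3$, hence $|Z| \le \frac{2}{3}\varepsilon^{1/2} n$, as claimed. The main obstacle is the bookkeeping: one must verify that all error terms from the hierarchy of constants are small enough not to overwhelm the $\varepsilon^{1/2}$ slack, both in the lower bound on $d_{\mathcal{T}}(v)$ and in the upper bound on $|\mathcal{T}\setminus\mathcal{H}|$. Since each $\varepsilon_k$ and $\delta_k$ can be chosen independently, this is a matter of careful but routine calibration.
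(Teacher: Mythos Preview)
Your argument is correct, but the paper takes a much shorter route. Instead of computing $d_{\mathcal{T}}(v)$ explicitly and bounding $|\mathcal{T}\setminus\mathcal{H}|$ level by level, the paper argues by contradiction: if $|Z|>\varepsilon^{1/2}n$, delete any $\varepsilon^{1/2}n$ vertices of $Z$ from $\mathcal{H}_1$; the resulting graph is still a $T_{\mathrm{rec}}$-subconstruction on $n-\varepsilon^{1/2}n$ vertices, but a short calculation using $|\mathcal{H}_1|\ge(1/24-\varepsilon)n^3$ from~\eqref{equ:max-degree-H1} shows it has more than $(n-\varepsilon^{1/2}n)^3/24$ edges, which is impossible. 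This relies only on the universal fact that an $m$-vertex $T_{\mathrm{rec}}$-subconstruction has at most $m^3/24$ edges, and needs no information about the fine structure of $\mathcal{T}$ or the missing-triple counts at each recursion depth.

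Your double-counting approach works because the partitions were obtained via Lemma~\ref{LEMMA:weak-stability} (so its conclusion~\ref{LEMMA:weak-stability-2} is indeed available at every level, even though this is not listed among~\ref{proof-level-one-assump-0}--\ref{proof-level-one-assump-3}), but it is considerably more laborious. Note also that your Step~2 has a one-line shortcut: since $\mathcal{T}$ is itself a $T_{\mathrm{rec}}$-subconstruction, $|\mathcal{T}\setminus\mathcal{H}_1|=|\mathcal{T}|-|\mathcal{H}_1|\le n^3/24-(1/24-\varepsilon)n^3=\varepsilon n^3$, which together with your Step~1 already gives $|Z|\le 2\varepsilon^{1/2}n$; pushing the constant down to $1$ does require the finer level-by-level bookkeeping you outlined.
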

    \begin{proof}[Proof of Claim~\ref{CLAIM:Z-upper-bound}]
        Suppose to the contrary that $|Z| > \varepsilon^{1/2} n$. Then take a subset $Z' \subseteq Z$ of size $\varepsilon^{1/2} n$. 
        It follows from the definition of $Z$ that the induced subgraph $\mathcal{H}_1[V\setminus Z']$ satisfies 
        \begin{align*}
            |\mathcal{H}_1[V\setminus Z']|
            & \ge |\mathcal{H}_1| - |Z'| \cdot \left(\frac{1}{8} - 2\varepsilon^{1/2}\right) n^2 \\ 
            & \ge \left(\frac{1}{24}- \varepsilon\right)n^3 - \varepsilon^{1/2} n \cdot \left(\frac{1}{8} - 2\varepsilon^{1/2}\right) n^2  \\
            & = \left(\frac{1}{24}- \frac{\varepsilon^{1/2}}{8} +\varepsilon\right)n^3 
            > \left(\frac{1}{24}- \frac{\varepsilon^{1/2}}{8} +\frac{\varepsilon}{8} - \frac{\varepsilon^{3/2}}{24}\right)n^3  
            = \frac{(n - \varepsilon^{1/2} n)^3}{24}, 
        \end{align*}
        which contradicts the fact that every $m$-vertex $T_{\mathrm{rec}}$-subconstruction has at most $m^3/24$ edges. 
    \end{proof}
    % Using the fact that every $n$-vertex $T_{\mathrm{rec}}$-subconstruction has at most $n^3/24$ edges, along with some straightforward calculations (see e.g.~{\cite[Lemma~4.2]{LMR1}} and~{\cite[Claim~30.1]{LP22}}),
    % \op{if this is straightforward calculation, please give proof sketch which reader can convert into a proof without need to look at some other papers}    
    % one can deduce that $|Z| \le \varepsilon^{1/2} n$. 
    It follows from Claim~\ref{CLAIM:Z-upper-bound} that the induced subgraph $\mathcal{H}_2 \coloneqq \mathcal{H}_1[V\setminus Z]$ satisfies 
    \begin{align}\label{equ:min-deg-G1}
        \delta(\mathcal{H}_2)
        \ge \left(\frac{1}{8} - 2\varepsilon^{1/2}\right) n^2 - |Z| \cdot n 
        \ge \left(\frac{1}{8} - 3\varepsilon^{1/2}\right) n^2. 
    \end{align}
    Fix an arbitrary vertex $v \in V \setminus Z$. 
    % For every $i \in [\ell]$ and for every $(x_1, \ldots, x_i)$, let 
    % \begin{align*}
    %     M_v[V_{x_1, \ldots, x_i}]
    %     \coloneqq L_{\mathcal{T}}(v) \setminus L_{\mathcal{H}_1}(v)
    % \end{align*}
    %with $d_{\mathcal{H}}(v) = \Delta(\mathcal{H})$.  
    We prove by a backward induction on $i \in [0, \ell]$ that  
    \begin{align}\label{equ:max-deg-inductive}
        \left|L_{\mathcal{H}}(v) \cap \binom{V_{x_1, \ldots, x_{i}}}{2}\right|
        \le \frac{|V_{x_1, \ldots, x_{i}}|^2}{8} + 3^{\ell-i} \varepsilon n^2,
        \quad\text{for every } 
        (x_1, \ldots, x_{i}) \in [3]^{i}. 
    \end{align}
    The base case $i = \ell$ is trivially true since, for every $(x_1, \ldots, x_{\ell}) \in [3]^{\ell}$,  we have
    \begin{align*}
        \binom{|V_{x_1, \ldots, x_{\ell}}|}{2} 
        \le \binom{\left(\frac{1}{2}\right)^{\ell} n}{2} \le \binom{\varepsilon^{1/2} n}{2} 
        \le \varepsilon n^2 = 3^{\ell-\ell} \varepsilon n^2.
    \end{align*}
    %$|V_{x_1, \ldots, x_{\ell}}| \le \left(\frac{1}{2}\right)^{\ell} n \le \varepsilon^{1/2} n$ 
    So we may focus on the inductive step. 
    Fix $i \in [0,\ell-1]$. Take an arbitrary $(x_1, \ldots, x_{i}) \in [3]^{i}$. 
    Let $U_j \coloneqq V_{x_1, \ldots, x_{i},j}$ for $j \in [3]$ and let $U \coloneqq V_{x_1, \ldots, x_{i}} = U_1 \cup U_2 \cup U_3$.
    By the inductive hypothesis, we have 
    \begin{align}\label{equ:max-deg-Uj}
        \left|L_{\mathcal{H}}(v) \cap \binom{U_j}{2}\right|
        \le \frac{|U_j|^2}{8} + 3^{\ell-i-1} \varepsilon n^2,
        \quad\text{for every }
        j \in [3]. 
    \end{align}
    Suppose to the contrary that 
    \begin{align}\label{equ:max-deg-U}
        \left|L_{\mathcal{H}}(v) \cap \binom{U}{2}\right|
        > \frac{|U|^2}{8} + 3^{\ell-i} \varepsilon n^2. 
    \end{align}
    Then, combining this with~\eqref{equ:max-deg-Uj}, we obtain 
    \begin{align}\label{equ:level-one-Lv-U1U2U3}
        \left|L_{\mathcal{H}}(v) \cap \mathcal{K}^2[U_1, U_2, U_3]\right|
        & = \left|L_{\mathcal{H}}(v) \cap \binom{U}{2}\right| - \sum_{i\in [3]} \left|L_{\mathcal{H}}(v) \cap \binom{U_j}{2}\right| \notag \\
        & \ge \frac{|U|^2}{8} +3^{\ell-i} \varepsilon n^2 - \sum_{i\in [3]}\left(\frac{|U_j|^2}{8} + 3^{\ell-i-1} \varepsilon n^2\right) \notag \\
        & = \frac{|U|^2}{8} - \sum_{i\in [3]}\frac{|U_j|^2}{8} \notag \\
        & \ge \frac{|U|^2}{8} - 3 \cdot \frac{1}{8}\left(\frac{|U|}{3} + \varepsilon |U|\right)^2  
        > \frac{|U|^2}{16}. 
    \end{align}
    By symmetry, we can assume that 
    \begin{align}\label{equ:level-one-Lv-local-max}
       \left|L_{\mathcal{H}}(v) \cap \mathcal{K}[U_2, U_3]\right|
        \ge \max\left\{\,|L_{\mathcal{H}}(v) \cap \mathcal{K}[U_1, U_2]|,|L_{\mathcal{H}}(v) \cap \mathcal{K}[U_1, U_3]|\,\right\}. 
    \end{align}
    % Then it follows from the inequality above that 
    % \begin{align*}
    %     |L_{\mathcal{H}}(v) \cap \mathcal{K}[U_2, U_3]|
    %     \ge \frac{1}{3}|L_{\mathcal{H}}(v) \cap \mathcal{K}[U_1, U_2, U_3]|
    %     \ge \frac{1}{12} \sum_{\{i,j\} \subseteq [3]} |U_i||U_j|. 
    % \end{align*}
    % Since $\frac{1}{4} \le \frac{1}{3} - \varepsilon_i \le |U_i|/|U| \le \frac{1}{3} + \varepsilon_i$, we have 
    % \begin{align*}
    %     |L_{\C G}(v) \cap \mathcal{K}[U_2, U_3]|
    %     \ge \frac{1}{12} \sum_{\{i,j\} \subseteq [3]} |U_i||U_j| 
    %     > \frac{3}{64} |U|^2. 
    % \end{align*}
    %
     Let 
    \begin{align*}
        B_{v}
         \coloneqq L_{\mathcal{H}}(v) \cap \left( \binom{U_2}{2} \cup \binom{U_3}{2} \cup \mathcal{K}[U_1, U_2\cup U_3]\right) 
        \quad\text{and}\quad 
        M_{v}
         \coloneqq \mathcal{K}[U_2, U_3] \setminus L_{\mathcal{H}}(v). 
    \end{align*}
    Thus, these are the sets of \textbf{bad} and \textbf{missing} pairs in the link graph of $v$ when we add $v$ to $U_1$.
    
    Due to~\ref{proof-level-one-assump-2}, \ref{proof-level-one-assump-3}, \eqref{equ:level-one-Lv-U1U2U3}, \eqref{equ:level-one-Lv-local-max}, and the assumption that $\varepsilon_i \ll \varepsilon$, Proposition~\ref{pr:5} can be applied to $\C G:=\mathcal{H}[U \cup \{v\}]$, and hence, we obtain 
    \begin{align*}
        \left|L_{\C G}(v) \setminus \binom{U_1}{2}\right|
        & = |U_2||U_3| - |M_v| + |B_v| \\
        % & \ge |U_2||U_3| - \frac{10}{9}\left(|B| - \delta n^2\right) + |B| \\
        % & \le |U_2||U_3| + \min\left\{\varepsilon n^2-\frac{|M_v[U_1, U_2, U_3]|}{10},\frac{10 \varepsilon n^2}{9}-\frac{|B_v[U_1, U_2, U_3]|}{9}\right\} \label{equ:max-degree-Mv} \\
        % & \le |U_2||U_3| + \frac{10}{9}\left(|B_v| - \frac{9}{10}|M_v|\right)  \\
        & \le |U_2||U_3| -\frac{|M_v|}{10} + \varepsilon n^2  
        % &  \le |U_2||U_3| + \varepsilon n^2
         \le \left(\frac{1}{3} + \varepsilon\right)^2 |U|^2 + \varepsilon n^2 
        \le \frac{|U|^2}{9} + 2\varepsilon n^2.
    \end{align*}
    Combining this with~\eqref{equ:max-deg-Uj}, we obtain 
    \begin{align*}
        |L_{\C G}(v)|
        & = \left|L_{\C G}(v) \cap \binom{U_1}{2}\right| + \left|L_{\C G}(v) \setminus \binom{U_1}{2}\right|  \\
        & \le \frac{|U_1|^2}{8} + 3^{\ell-i-1} \varepsilon n^2 + \frac{|U|^2}{9} + 2\varepsilon n^2  \\
        & \le \frac{1}{8}\left(\frac{1}{3} + \varepsilon \right)^2 |U|^2  + 3^{\ell-i-1} \varepsilon n^2 + \frac{|U|^2}{9} + 2\varepsilon n^2  \\
        & < \frac{|U|^2}{72} + \varepsilon |U|^2 + 3^{\ell-i-1} \varepsilon n^2 + \frac{|U|^2}{9} + 2\varepsilon n^2  
         \le \frac{|U|^2}{8} + 3^{\ell-i} \varepsilon n^2,  
    \end{align*}
    contradicting~\eqref{equ:max-deg-U}. 
    This completes the proof for the inductive step; hence~\eqref{equ:max-deg-inductive} holds. 
    
    Taking $i = 0$ in~\eqref{equ:max-deg-inductive}, we obtain 
    \begin{align*}
        d_{\mathcal{H}}(v)
        = \left|L_{\mathcal{H}}(v) \cap \binom{V(\C H)}{2}\right|
        %& = |L_{\mathcal{H}}(v) \cap \binom{V_{0}}{2}| 
        & \le \frac{|V(\C H)|^2}{8} + 3^{\ell} \varepsilon n^2  \\
        & \le \frac{n^2}{8} + \left(2^{\ell}\right)^{\log_{2}3} \varepsilon n^2 \\
        & \le \frac{n^2}{8} + 3\,\varepsilon^{-\frac{1}{2}\log_{2}3} \varepsilon n^2
        < \frac{n^2}{8} +  \varepsilon^{0.2} n^2. 
    \end{align*}
    So it follows from the definition of $Z$ that 
    \begin{align*}
        \Delta(\mathcal{H})
        \le \max\left\{d_{\mathcal{H}}(u) \colon u\in V\setminus Z \right\}
        < \frac{n^2}{8} +  \varepsilon^{0.2} n^2, 
    \end{align*}
    proving Proposition~\ref{PROP:C5minus-max-degree}~\ref{PROP:C5minus-max-degree-1}. 

    Next, we prove Proposition~\ref{PROP:C5minus-max-degree}~\ref{PROP:C5minus-max-degree-2}. 
    In fact, it follows from a more general, unpublished result by Mubayi, Reiher, and the third author~\cite{LMR21mindeg}, which implies that if a finite family $\mathcal{F}$ of $r$-graphs satisfies $\pi(\mathcal{F}) > 0$ and, for large $n$, every extremal construction on $n$ vertices is structurally close to the blowup, recursive blowup, or mixed recursive blowup of some minimal patterns (see~\cite{PI14,LP22} for definitions), then $\delta(\mathcal{H}) = \left(\pi(\mathcal{F}) - o(1)\right)\binom{n-1}{r-1}$ for every extremal $\mathcal{F}$-free $n$-vertex $r$-graph $\mathcal{H}$.  
    The proof relies on a straightforward deleting-duplicating argument, which we present below for the case $\mathcal{F} = \{C_{5}^{3-}\}$.

   Take any $\xi>0$ and then fix sufficiently small $\e>0$. Given $\e$, let all the  notation and conventions from the argument for the first part apply, except $\C H$ is now a maximum $C_{5}^{3-}$-free $3$-graph with $n$ vertices. Since $n\gg 1/\delta$, $\C H$ has at least $(\frac1{24}-\delta)n^3$ edges and all the above results hold.

\hide{    
    We may choose $\varepsilon>0$ in the proof above to be small enough that $\varepsilon \ll \xi$. Now, let $\mathcal{H}$ be an $n$-vertex $C_{5}^{3-}$-free $3$-graph with exactly $\mathrm{ex}(n, C_{5}^{3-})$ edges. Note that we can choose $n$ sufficiently large so that $|\mathcal{H}| = \mathrm{ex}(n, C_{5}^{3-}) \ge (1/24 - \delta)n^3$. Therefore, all the conclusions above also hold for the new $3$-graph $\mathcal{H}$. 
}

    Recall that $\mathcal{T}$ is the $T_{\mathrm{rec}}$-construction on $V$ of depth $\ell$ such that, for each $i \in [\ell]$, the level-$i$ parts of $\mathcal{G}$ are exactly the sets in $\left\{V_{\mathbf{x}} \colon \mathbf{x} \in [3]^{i}\right\}$ and $\mathcal{H}_1 = \mathcal{H} \cap \mathcal{T}$. 
    In particular, it follows from~\eqref{equ:max-degree-H1} that $|\mathcal{T}| \ge |\mathcal{H}_1|  \ge \left(1/24 - \varepsilon\right)n^3$. 
    % \begin{align*}
    %     |\mathcal{T}|
    %     \ge |\mathcal{H}_1| 
    %     \ge \left(\frac{1}{24} - \varepsilon\right)n^3. 
    % \end{align*}
    % 
    It is clear that $\mathcal{T}$ is $C_{5}^{3-}$-free. 
    So, applying Proposition~\ref{PROP:C5minus-max-degree}~\ref{PROP:C5minus-max-degree-1} to $\mathcal{T}$ with $\varepsilon$ and $\delta$ there corresponding to $\xi$ and $\varepsilon$ here, we obtain 
    \begin{align}\label{equ:max-deg-T-ell-max}
        \Delta(\mathcal{T})
        \le \left(\frac{1}{8} + \xi^{1/5}\right)n^3. 
    \end{align}
    %
    % Let $\hat{\mathcal{T}}$ and $\hat{\mathcal{H}}_{1}$ be obtained from $\mathcal{T}$ and $\mathcal{H}_1$ by removing edges that are completely contained within the level-$\ell$ parts, respectively.
    % It is clear that 
    % \begin{align}\label{equ:max-deg-T-ell-max}
    %     \Delta(\hat{\mathcal{T}})
    %     \le \Delta(\mathcal{T})
    %     \le \left(\frac{1}{8} + \xi^{1/5}\right)n^3. 
    % \end{align}
    % 
    Additionally, for every $v \in V\setminus Z$, it follows from~\eqref{equ:min-deg-G1} that 
    \begin{align}\label{equ:max-deg-H1-ell-min}
         d_{\mathcal{H}_1}(v) 
        \ge d_{\mathcal{H}_2}(v)
        \ge \left(\frac{1}{8} - 3\varepsilon^{1/2}\right) n^2. 
    \end{align}
    Since $|Z| \le \varepsilon^{1/2} n$, there exists $\mathbf{x} = (x_1, \ldots, x_{\ell}) \in [3]^{\ell}$ such that $|V_{\mathbf{x}} \setminus Z| \ge 5$. Fix five distinct vertices $v_1, \ldots, v_{5} \in V_{\mathbf{x}} \setminus Z$, and let 
    \begin{align*}
        L 
        \coloneqq \bigcap_{i\in [5]}L_{\mathcal{H}_{1}}(v_i) 
    \end{align*}
    Note that $L_{\mathcal{T}}(v_1) = \cdots = L_{\mathcal{T}}(v_5)$ and $L_{\mathcal{H}_{1}}(v_i) 
    \subseteq L_{\mathcal{T}}(v_i)$ for every $i \in [5]$. 
    So it follows from~\eqref{equ:max-deg-T-ell-max},~\eqref{equ:max-deg-H1-ell-min}, and the Inclusion-Exclusion Principle that 
    \begin{align}\label{equ:max-deg-L-lower-bound}
        |L|
         \ge \left(\frac{1}{8} - 3\varepsilon^{1/2}\right) n^2 - 5 \left(\left(\frac{1}{8} + \xi^{1/5}\right)n^2 - \left(\frac{1}{8} - 3\varepsilon^{1/2}\right) n^2\right) 
         \ge \left(\frac{1}{8} - 23 \xi^{1/5}\right)n^2. 
    \end{align}
    Assume, for the sake of contradiction, that there exists a vertex $u \in V$ such that $d_{\mathcal{H}}(u) < \left(\frac{1}{8} - 24 \xi^{1/5}\right)n^2$. Then define the new $3$-graph $\hat{\mathcal{H}}$ as 
    \begin{align*}
        \hat{\mathcal{H}}
        \coloneq \left(\mathcal{H} \setminus \left\{uvw \colon vw \in L_{\mathcal{H}}(u)\right\}\right) \cup \left\{uvw \in \binom{V}{3}\colon vw \in L\right\}. 
    \end{align*}
    In other words, we change the link of $u$ from $L_{\mathcal{H}}(u)$ to $L$. 
    
    We claim that $\hat{\mathcal{H}}$ is still $C_{5}^{3-}$-free. Indeed, suppose to the contrary that there exists a copy of $C_{5}^{3-}$, say on the set $S$ of $5$ vertices, in $\hat{\mathcal{H}}$. Then $S$ must contain $u$, meaning that $|S\setminus \{u\}| = 4$. So there exists a vertex in $\{v_1, \ldots, v_5\}$, say $v_1$, that is not contained in $S$. Let $\hat{S} \coloneqq (S\setminus \{u\}) \cup \{v_1\}$, noting that $\hat{\mathcal{H}}[\hat{S}] = \mathcal{H}[\hat{S}]$. Since $L_{\hat{\mathcal{H}}}(u) \subseteq L_{\hat{\mathcal{H}}}(v_1)$, the induced subgraph of $\hat{\mathcal{H}}$ on $\hat{S}$ must also contain a copy of $C_{5}^{3-}$. This means that $\mathcal{H}[\hat{S}]$ contains a copy of $C_{5}^{3-}$, a contradiction. Therefore, $\hat{\mathcal{H}}$ is $C_{5}^{3-}$-free. 
    However, it follows from the defintion of $\hat{\mathcal{H}}$ and~\eqref{equ:max-deg-L-lower-bound} that 
    \begin{align*}
        |\hat{\mathcal{H}}|
        \ge |\mathcal{H}| - d_{\mathcal{H}}(u) + |L| - n
        \ge |\mathcal{H}| - \left(\frac{1}{8} - 24 \xi^{1/5}\right)n^2 + \left(\frac{1}{8} - 23 \xi^{1/5}\right)n^2 - n
        > |\mathcal{H}|, 
    \end{align*}
    contradicting the maximality of $\mathcal{H}$. This completes the proof of Proposition~\ref{PROP:C5minus-max-degree}~\ref{PROP:C5minus-max-degree-2}. 
\end{proof}

We are now ready to prove Theorem~\ref{THM:exact-level-one}. 
\begin{proof}[Proof of Theorem~\ref{THM:exact-level-one}]
    Fix $\varepsilon >0$. We may assume that $\varepsilon$ is small. 
    Choose $\delta = \delta(\varepsilon) \in (0, 10^{-8})$ to be sufficiently small, and let $n > 1/\delta$ be sufficiently large. 
    Let $\mathcal{H}$ be an $n$-vertex $C_{5}^{3-}$-free $3$-graph with $\mathrm{ex}(n,C_{5}^{3-})$ edges, i.e., the maximum possible size. Since $n$ is sufficiently large and $\pi(C_{5}^{3-}) = \frac{1}{4}$, we have $|\mathcal{H}| \ge \left(\frac{1}{4} - \delta\right)\binom{n}{3}$. 
    
    Let $V \coloneqq V(\mathcal{H})$. 
    Let $U_1 \cup U_2 \cup U_3 = V$ be the partition returned by Lemma~\ref{LEMMA:weak-stability}. 
    Let 
    \begin{align*}
        \mathcal{G} \coloneqq \mathcal{H} \cap \mathcal{K}[U_1, U_2, U_3]
        \quad\text{and}\quad 
        Z \coloneqq \left\{v\in V \colon d_{\mathcal{G}}(v) \le \left(1/9 - 2\sqrt{3}\delta^{1/4}\right)n^2\right\}.
    \end{align*}
        It follows from Lemma~\ref{LEMMA:weak-stability}~\ref{LEMMA:weak-stability-1} and~\ref{LEMMA:weak-stability-2} that 
    \begin{align*}
        |\mathcal{G}| 
        = |\mathcal{H} \cap \mathcal{K}[U_1, U_2, U_3]|
        & = |U_1||U_2||U_3| - |M_{\mathcal{H}}(U_1, U_2, U_3)| \\
        & \ge \left(\frac{1}{3} - 8\delta^{1/2}\right)^3 n^3 - 300\delta n^{3} 
        % & \ge \left(\frac{1}{27} - \frac{8\delta^{1/2}}{3}\right) n^3 - 300 \delta n^3
        \ge \left(\frac{1}{27} - 3\delta^{1/2}\right) n^3,
    \end{align*}
    where the last inequality follows from the assumption that $\delta$ is sufficiently small. Therefore, a similar argument to that in the proof of Claim~\ref{CLAIM:Z-upper-bound} yields 
    \begin{align}\label{equ:Z-upper-bound-2}
        |Z| \le \left(3\delta^{1/2}\right)^{1/2}n \le 3 \delta^{1/4} n.
    \end{align}
    
    Note that the partition $U_1 \cup U_2 \cup U_3 = V$ is not necessarily locally maximal. So, let us keep moving vertices in $Z$ one by one between parts, as long as each move strictly increase the number of transversal edges. 
    Let $V_1 \cup V_2 \cup V_3 = V$ denote the finial partition. Note from the definition that $U_i \setminus Z \subseteq V_i$ for every $i \in [3]$.

    \begin{claim}\label{CLAIM:V1V2V3-locally-max}
        The partition $V_1 \cup V_2 \cup V_3 = V$ is locally maximal. 
    \end{claim}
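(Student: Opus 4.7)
The plan is to split into two cases depending on whether the vertex being moved lies in $Z$. The iterative process producing $V_1 \cup V_2 \cup V_3$ modifies only vertices of $Z$ and must terminate, since each move strictly increases the bounded integer $|\mathcal{H} \cap \mathcal{K}[V_1, V_2, V_3]|$. Consequently, at termination, no reassignment of a $Z$-vertex between parts can strictly increase the transversal count. Hence the remaining task is to show that no vertex $v \in V \setminus Z$ can be moved to increase the count either.

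Fix such a $v$, say $v \in U_i \setminus Z$, which by construction lies in $V_i$. The definition of $Z$ gives $d_{\mathcal{G}}(v) \ge (1/9 - 2\sqrt{3}\,\delta^{1/4})n^2$, and since $\mathcal{G} = \mathcal{H} \cap \mathcal{K}[U_1, U_2, U_3]$, every edge of $L_{\mathcal{G}}(v)$ lies in $\mathcal{K}^2[U_j, U_k]$, where $\{i,j,k\} = [3]$. Because only the $|Z| \le 3\delta^{1/4} n$ vertices of $Z$ were ever moved, we have $U_j \setminus Z \subseteq V_j$ and $U_k \setminus Z \subseteq V_k$ by~\eqref{equ:Z-upper-bound-2}, so at most $2|Z|n$ link-pairs of $v$ leave $\mathcal{K}^2[V_j, V_k]$. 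This yields
\begin{align*}
    |L_{\mathcal{H}}(v) \cap \mathcal{K}^2[V_j, V_k]|
    \;\ge\; d_{\mathcal{G}}(v) - 2|Z|\,n
    \;\ge\; \left(\tfrac{1}{9} - 10\delta^{1/4}\right) n^2.
\end{align*}

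Next I will invoke Proposition~\ref{PROP:C5minus-max-degree}~\ref{PROP:C5minus-max-degree-1}, which is applicable because $|\mathcal{H}| = \mathrm{ex}(n, C_5^{3-}) \ge (1/24 - \delta)n^3$ (using $\pi(C_5^{3-}) = 1/4$), to obtain $d_{\mathcal{H}}(v) \le (1/8 + \tau)n^2$ for a parameter $\tau$ that can be made arbitrarily small by choosing $\delta$ sufficiently small. Combining these two bounds, for any $i' \ne i$ with remaining index $k'$ we get
\begin{align*}
    |L_{\mathcal{H}}(v) \cap \mathcal{K}^2[V_i, V_{k'}]|
    \;\le\; d_{\mathcal{H}}(v) - |L_{\mathcal{H}}(v) \cap \mathcal{K}^2[V_j, V_k]|
    \;\le\; \left(\tfrac{1}{72} + \tau + 10\delta^{1/4}\right) n^2.
\end{align*}
Moving $v$ from $V_i$ to $V_{i'}$ changes the transversal count by $|L_{\mathcal{H}}(v) \cap \mathcal{K}^2[V_i, V_{k'}]| - |L_{\mathcal{H}}(v) \cap \mathcal{K}^2[V_j, V_k]|$, which is at most $(-7/72 + \tau + 20\delta^{1/4})n^2 < 0$ for $\delta$ small enough. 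Together with the first paragraph, this establishes local maximality.

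The main obstacle is merely bookkeeping: the error terms $\tau$ and $\delta^{1/4}$ must be dominated by the gap $1/9 - 1/72 = 7/72$. This is automatic once $\delta$ in the setup of Theorem~\ref{THM:exact-level-one} is taken sufficiently small (in particular small enough for Proposition~\ref{PROP:C5minus-max-degree}~\ref{PROP:C5minus-max-degree-1} to give the required $\tau$), so no delicate constant tracking is needed beyond what appears above.
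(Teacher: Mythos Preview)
Your proof is correct and follows essentially the same approach as the paper: both split into the cases $v\in Z$ and $v\notin Z$, dispose of the first case via the termination condition of the greedy process, and for the second case combine the lower bound $|L_{\mathcal H}(v)\cap\mathcal K[V_j,V_k]|\ge(1/9-o(1))n^2$ (coming from $v\notin Z$ and $|Z|$ small) with the maximum-degree bound from Proposition~\ref{PROP:C5minus-max-degree}~\ref{PROP:C5minus-max-degree-1}. The only cosmetic difference is that the paper argues by contradiction (if local maximality fails then two crossing link-counts each exceed $(1/9-o(1))n^2$, forcing $d_{\mathcal H}(v)>2/9\,n^2>1/8\,n^2$), whereas you directly bound the change in the transversal count; the arithmetic $1/8-1/9=1/72<1/9$ is the same either way. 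One tiny quibble: your citation of~\eqref{equ:Z-upper-bound-2} for the inclusion $U_j\setminus Z\subseteq V_j$ is misplaced (that inclusion is by construction of the process, not by the size bound on $|Z|$), but this does not affect the argument.
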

    \begin{proof}[Proof of Claim~\ref{CLAIM:V1V2V3-locally-max}]
        It suffices to show that for every $i \in [3]$ and for every vertex $v \in V_i$, 
        \begin{align}\label{equ:locally-max-def}
            |L_{\mathcal{H}}(v) \cap \mathcal{K}[V_j, V_k]|
            \ge \max\left\{|L_{\mathcal{H}}(v) \cap \mathcal{K}[V_i, V_j]|,~|L_{\mathcal{H}}(v) \cap \mathcal{K}[V_i, V_k]|\right\}, 
        \end{align}
        where $\{j,k\} = [3]\setminus \{i\}$. 
        
        It is clear that~\eqref{equ:locally-max-def} holds for every $v\in Z$ due to the vertex-moving operations defined above. 
        So it suffices to prove~\eqref{equ:locally-max-def} for $v \in V\setminus Z$. 
        Suppose to the contrary that this is not true. 
        Fix a vertex $v\in V\setminus Z$ for which~\eqref{equ:locally-max-def} fails. By symmetry, we may assume that $v\in V_1$, and 
        \begin{align}\label{equ:v-L12-L23}
            |L_{\mathcal{H}}(v) \cap \mathcal{K}[V_1, V_2]|
            > |L_{\mathcal{H}}(v) \cap \mathcal{K}[V_2, V_3]|.
        \end{align}
        It follows from~\eqref{equ:Z-upper-bound-2} that 
        \begin{align*}
            |L_{\mathcal{H}}(v) \cap \mathcal{K}[V_2, V_3]|
            \ge |L_{\mathcal{H}}(v) \cap \mathcal{K}[U_2\setminus Z, U_3\setminus Z]|
            \ge d_{\mathcal{G}}(v) - |Z| \cdot n
            \ge \left(\frac{1}{9} - 4\delta^{1/4}\right)n^2. 
        \end{align*}
        Combining this with~\eqref{equ:v-L12-L23}, we obtain 
        \begin{align*}
            d_{\mathcal{H}}(v)
            \ge |L_{\mathcal{H}}(v) \cap \mathcal{K}[V_2, V_3]| + |L_{\mathcal{H}}(v) \cap \mathcal{K}[V_1, V_2]|
            \ge 2 \left(\frac{1}{9} - 4\delta^{1/4}\right)n^2
            > \left(\frac{1}{8} + \frac{1}{100}\right)n^2,
        \end{align*}
        which contradicts Proposition~\ref{PROP:C5minus-max-degree}~\ref{PROP:C5minus-max-degree-1}.
    \end{proof}

    % Let $V_1 \cup V_2 \cup V_3 = V(\mathcal{H})$ be a partition such that $\mathcal{H}\cap \C K[V_1,V_2,V_3]$ is maximized. 
    % Let 
    % \begin{align*}
    %     B 
    %     \coloneqq \left\{e\in \mathcal{H} \colon |e\cap V_i| = 2 \text{ for some } i \in [3]\right\}
    %     \quad\text{and}\quad 
    %     M 
    %     \coloneqq \mathcal{K}[V_1, V_2, V_3] \setminus \mathcal{H}. 
    % \end{align*}
    %
    Let $B:=B_{\C H}(V_1,V_2,V_3)$ and $M:=M_{\C H}(V_1,V_2,V_3)$ be respectively the sets of bad edges and missing edges, as defined in~\eqref{equ:def-bad-triple} and~\eqref{equ:def-missing-triple}. 
    % It follows from Theorem~\ref{THM:C5Minus-stability} that 
    We can choose $\delta$ to be sufficiently small such that, by Lemma~\ref{LEMMA:weak-stability}~\ref{LEMMA:weak-stability-1},~\ref{LEMMA:weak-stability-2}, and~\eqref{equ:Z-upper-bound-2}, the following inequalities hold:
    \begin{align}\label{equ:vtx-stab-a}
        \min_{i\in [3]}|V_i| 
         \ge \min_{i\in [3]}|U_i| - |Z|
        \ge \frac{n}{3} - \varepsilon n, 
    \end{align}
    \begin{align}\label{equ:vtx-stab-b}
        \max\left\{|B|,\,|M|\right\} 
         \le \max\left\{|B_{\mathcal{H}}(U_1, U_2, U_3)|,\,|M_{\mathcal{H}}(U_1, U_2, U_3)|\right\} + |Z| \cdot n^2
        \le \varepsilon n^3, 
    \end{align}
    \begin{align}\label{equ:vtx-stab-c}
        |\mathcal{H} \cap \mathcal{K}[V_1, V_2, V_3]| 
        & = |V_1||V_2||V_3| - |M|
        \ge |V_1||V_2||V_3| - \varepsilon n^3. 
    \end{align}
    % \begin{align}
    %      \min_{i\in [3]}|V_i| 
    %     & \ge \min_{i\in [3]}|U_i| - |Z|
    %     \ge \frac{n}{3} - \varepsilon n,
    %     \quad\text{and}\quad \label{equ:vtx-stab-a} \\
    %     \max\left\{|B|,\,|M|\right\} 
    %     & \le \max\left\{|B_{\mathcal{H}}(U_1, U_2, U_3)|,\,|M_{\mathcal{H}}(U_1, U_2, U_3)|\right\} + |Z| \cdot n^2
    %     \le \varepsilon n^3,  \label{equ:vtx-stab-b} \\
    %     |\mathcal{H} \cap \mathcal{K}[V_1, V_2, V_3]| 
    %     & \ge |\mathcal{H} \cap \mathcal{K}[U_1, U_2, U_3]| - |Z| \cdot n^2 
    %     \ge |V_1||V_2||V_3| - \varepsilon n^3. 
    % \end{align}
    %
    Fix an arbitrary vertex $v$ in  $V$. 
    By symmetry, we may assume that $v \in V_1$. 
    Let 
    \begin{align*}
        B_{v}
        \coloneqq L_{\mathcal{H}}(v) \cap \left( \binom{V_2}{2} \cup \binom{V_3}{2} \cup \mathcal{K}[V_1, V_2\cup V_3]\right)
        \quad\text{and}\quad 
        M_{v}
        \coloneqq \mathcal{K}[V_2, V_3] \setminus L_{\mathcal{H}}(v). 
    \end{align*}
    It follows from Claim~\ref{CLAIM:V1V2V3-locally-max} that 
    %the maximality of $\mathcal{H}\cap\C K[V_1, V_2, V_3]$ that 
    \begin{align}\label{equ:vtx-stab-d}
        |L_{\mathcal{H}}(v) \cap \mathcal{K}[V_2, V_3]| \ge \max\left\{|L_{\mathcal{H}}(v) \cap \mathcal{K}[V_1, V_2]|,~|L_{\mathcal{H}}(v) \cap \mathcal{K}[V_1, V_3]|\right\}.
    \end{align}
    For $i \in [3]$, let $\mathcal{H}_i \coloneqq \mathcal{H}[V_i \cup \{v\}]$, noting from  Lemma~\ref{LEMMA:weak-stability}~\ref{LEMMA:weak-stability-3} that 
    \begin{align*}
        |\mathcal{H}_i|
        \ge |\mathcal{H}[V_i]|
        \ge |\mathcal{H}[U_i \setminus Z]|
        & \ge |\mathcal{H}[U_i]| - |Z| \cdot n^2 \\
        & \ge \left(\frac{1}{24} - 500 \delta\right)|V_i|^3 - 3\delta^{1/4}n^3
        \ge \left(\frac{1}{24} - 600 \delta^{1/4}\right)|V_i|^3. 
    \end{align*}
    We can choose $\delta$ to be sufficiently small such that for each $i \in [3]$, by Proposition~\ref{PROP:C5minus-max-degree}~\ref{PROP:C5minus-max-degree-1}, 
    \begin{align*}
        \left|L_{\mathcal{H}}(v) \cap \binom{V_i}{2}\right|
        = d_{\mathcal{H}_i}(v)
        \le \frac{|V_i|^2}{8} + \varepsilon n^2 
        \le \frac{1}{8}\left(\frac{1}{3}+ \varepsilon\right)^2 n^2 + \varepsilon n^2 
        \le \frac{n^2}{72} + 2\varepsilon n^2. 
    \end{align*}
    Hence,  
    \begin{align}\label{equ:vtx-stab-e}
        |L_{\mathcal{H}}(v) \cap \mathcal{K}^2[V_1, V_2, V_3]|
        & = d_{\mathcal{H}}(v) - \sum_{i\in [3]}d_{\mathcal{H}_i}(v)
         \ge d_{\mathcal{H}}(v) - 3\left(\frac{n^2}{72} + 2\varepsilon n^2\right) \notag \\
        & \ge \left(\frac{1}{8} - \varepsilon\right)n^2 - 3\left(\frac{n^2}{72} + 2\varepsilon n^2\right)
        > \frac{n^2}{16}. 
    \end{align}
    From~\eqref{equ:vtx-stab-a},~\eqref{equ:vtx-stab-c},~\eqref{equ:vtx-stab-d}, and~\eqref{equ:vtx-stab-e}, we can apply Proposition~\ref{pr:5}. As a result, 
    \begin{align*}
        \left|L_{\mathcal{H}}(v) \setminus \binom{V_1}{2}\right|
        = |V_2||V_3| - |M_v|
        \le |V_2||V_3| - \max\left\{\frac{|B_{v}|}{9},\,\frac{|M_{v}|}{10}\right\} + \varepsilon n^2. 
    \end{align*}
    Combining this with Proposition~\ref{PROP:C5minus-max-degree}~\ref{PROP:C5minus-max-degree-1} (which is applied to $\mathcal{H}_1$), we obtain  
    \begin{align*}
        |L_{\mathcal{H}}(v)|
        & = \left|L_{\mathcal{H}}(v) \cap \binom{V_1}{2}\right| + \left|L_{\mathcal{H}}(v) \setminus \binom{V_1}{2}\right| \\
        & \le \left(\frac{1}{8}+\varepsilon\right)|V_1|^2 + |V_2||V_3| + \varepsilon n^2 - \max\left\{\frac{|B_{v}|}{9},\,\frac{|M_{v}|}{10}\right\} \\
        & \le \left(\frac{1}{8}+\varepsilon\right) \left(\frac{n}{3}+\varepsilon n\right)^2 + \left(\frac{n}{3}+\varepsilon n\right)^2  + \varepsilon n^2 - \max\left\{\frac{|B_{v}|}{9},\,\frac{|M_{v}|}{10}\right\} \\ 
        & \le \frac{n^2}{8} + 3\varepsilon n^2 - \max\left\{\frac{|B_{v}|}{9},\,\frac{|M_{v}|}{10}\right\}. 
    \end{align*}
    Since, by Proposition~\ref{PROP:C5minus-max-degree}~\ref{PROP:C5minus-max-degree-2}, $|L_{\mathcal{H}}(v)| \ge \left(\frac{1}{8}-\varepsilon\right) n^2$, the inequality above implies that 
    \begin{align}\label{equ:Bv-upper-bound}
        |B_v| 
        \le 36\varepsilon n^2 
        \quad\text{and}\quad 
        |M_{v}| 
        \le 40 \varepsilon n^2. 
    \end{align}
    Next, we return to the analysis of the number of bad edges and missing edges in $\mathcal{H}$.
    Our goal is to show that $|B| \le |M|$, with equality holding only if $M = B = \emptyset$. 
    Suppose that $B\neq \emptyset$. 
    % By symmetry, we may assume that $\{v_1, v_3\} \subseteq V_1$ and $v_2 \in V_2$. 
    % Define 
    % \begin{align*}
    %     M_{1}(e)
    %     \coloneqq \left\{\tilde{e} \in M \colon |\tilde{e} \cap e| = 1\right\}
    %     \quad\text{and}\quad 
    %     M_{2}(e)
    %     \coloneqq \left\{\tilde{e} \in M \colon |\tilde{e} \cap e| = 2\right\}. 
    % \end{align*}
    %

We call a pair of vertices \textbf{crossing} if its two vertices belong to two different parts~$V_i$.
    
    \begin{claim}\label{CLAIM:M1e-M2e}
        For every $e = \{v_1, v_2, v_3\} \in B$ with $v_2v_3$ crossing, we have 
        \begin{align*}
            d_{M}(v_2v_3) 
            \ge \left(\frac{1}{3} - 300\varepsilon\right)n. 
        \end{align*}
        % $d_{M}(v_2v_3) \ge \left(\frac{1}{3} - 100\varepsilon\right)n$ or $d_{M}(v_1) + d_{M}(v_2) \ge ??$. 
    \end{claim}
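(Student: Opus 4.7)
The plan is to bound $|W|$ from above, where $W := \{w \in V_k : \{v_2, v_3, w\} \in \mathcal{H}\}$ and $V_k$ denotes the part of the partition that $e$ avoids; since any missing transversal triple through the crossing pair $\{v_2, v_3\}$ must have its third vertex in $V_k$, we get $d_M(v_2 v_3) = |V_k| - |W|$, and the bound $|V_k| \ge n/3 - \varepsilon n$ from~\eqref{equ:vtx-stab-a} will yield the claim once $|W|$ is shown to be of order $O(\varepsilon n)$. Since $e \in B$ has profile $\{0, 1, 2\}$ and $v_2, v_3$ lie in different parts, $v_1$ must lie in the same part as exactly one of $v_2, v_3$; by swapping the labels $v_2 \leftrightarrow v_3$ if necessary (which leaves the statement invariant), we may assume that $v_1, v_2 \in V_i$ and $v_3 \in V_j$ for some distinct $i, j \in [3]$, with $k$ the remaining index.

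For each $w \in W$ we have the two edges $\{v_1, v_2, v_3\}$ and $\{v_2, v_3, w\}$ of $\mathcal{H}$, sharing the pair $\{v_2, v_3\}$. The key observation is that, for any $x \notin \{v_1, v_2, v_3, w\}$, the five vertices in the cyclic order $v_1, v_3, v_2, w, x$ span a copy of $C_5^{3-}$ via the four consecutive triples $\{v_1, v_3, v_2\}$, $\{v_3, v_2, w\}$, $\{v_2, w, x\}$, $\{w, x, v_1\}$ whenever the last two of these triples both lie in $\mathcal{H}$; so $C_5^{3-}$-freeness forces that at most one of $\{v_2, w, x\}$ and $\{v_1, w, x\}$ is in $\mathcal{H}$. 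Restricting $x$ to $V_j \setminus \{v_3\}$ makes both triples $\{v_\ell, w, x\}$ (for $\ell = 1, 2$) transversal, so the sets $\{x \in V_j : \{v_\ell, w, x\} \in \mathcal{H}\}$ for $\ell = 1, 2$ are disjoint outside of $v_3$. Taking complements in $V_j$ and using that the third vertex of a transversal triple through $\{v_\ell, w\}$ must lie in $V_j$ (so $d_M(v_\ell w)$ equals $|V_j|$ minus the size of that set) gives
\[
d_M(v_1 w) + d_M(v_2 w) \ge |V_j| - 1 \qquad \text{for every } w \in W.
\]

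Summing over $w \in W$ and using the identity $\sum_{w \in V_k} d_M(v_\ell w) = |M_{v_\ell}|$ (each missing transversal triple through $v_\ell$ is counted exactly once), together with the vertex-link stability bound~\eqref{equ:Bv-upper-bound} which yields $|M_{v_\ell}| \le 40 \varepsilon n^2$, we obtain $|W|(|V_j| - 1) \le |M_{v_1}| + |M_{v_2}| \le 80 \varepsilon n^2$. Combined with $|V_j| \ge n/3 - \varepsilon n$, this bounds $|W|$ by at most $270 \varepsilon n$ (for $\varepsilon$ small enough), and hence $d_M(v_2 v_3) = |V_k| - |W| \ge (n/3 - \varepsilon n) - 270 \varepsilon n \ge (1/3 - 300\varepsilon) n$, as required.

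The only delicate point is identifying the correct $C_5^{3-}$ embedding: two edges sharing a pair sit inside $C_5^{3-}$ in two essentially distinct ways, and we must pick the embedding that forbids the simultaneous presence of two \emph{transversal} triples (so the forbidden configuration converts cleanly into missing edges counted by $|M_{v_\ell}|$, rather than into bad edges). Once this is set up, the remainder is a clean double-counting against the vertex-stability bound~\eqref{equ:Bv-upper-bound}.
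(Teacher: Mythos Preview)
Your proof is correct and takes essentially the same approach as the paper: both exploit the same $C_5^{3-}$ embedding to show that for each $w$ in the avoided part with $\{v_2,v_3,w\}\in\mathcal{H}$ and each $x$ in the part of the ``isolated'' vertex of $e$, at least one of the two transversal triples through $\{w,x\}$ and a doubled-part vertex of $e$ is missing, and then conclude via the vertex-stability bound $|M_v|\le 40\varepsilon n^2$ from~\eqref{equ:Bv-upper-bound}. The only difference is organisational---you bound $|W|$ directly, whereas the paper phrases the same computation as a proof by contradiction.
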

    \begin{proof}[Proof of Claim~\ref{CLAIM:M1e-M2e}] By the symmetry between parts, assume that $v_1,v_3\in V_1$ and $v_2\in V_2$.
        First, observe that for every pair $(v_4, v_5) \in V_3 \times (V_2 \setminus \{v_2\})$, 
        \begin{align}\label{equ:missing-triple}
            \text{at least one triple in $\{v_2v_3v_4, v_3v_4v_5, v_4v_5v_1\}$ belongs to $M$}, 
        \end{align}
        since otherwise, $v_1v_2v_3v_4v_5$ would form a copy of $C_{5}^{3-}$ in $\mathcal{H}$.

        Suppose to the contrary that $d_{M}(v_2v_3) < \left(\frac{1}{3} - 300\varepsilon\right)n$. 
        Define 
        \begin{align*}
            U 
            \coloneqq \left\{u \in V_3 \colon v_2v_3u \in M\right\}. 
        \end{align*}
        Since $U\subseteq N_{M}(v_2v_3)$, we have $|U| < \left(\frac{1}{3} - 300\varepsilon\right)n$. 
        Consequently, 
        \begin{align*}
            |V_3\setminus U|
            \ge \left(\frac{1}{3} - \varepsilon\right)n - \left(\frac{1}{3} - 300\varepsilon\right)n
            = 299 \varepsilon n. 
        \end{align*}
        By~\eqref{equ:missing-triple}, for every pair $(v_4, v_5) \in (V_3\setminus U) \times (V_2 \setminus \{v_2\})$, either $\{v_3v_4v_5\} \in M$ or $\{v_1v_4v_5\} \in M$. 
        Therefore, 
        \begin{align*}
            d_{M}(v_1) + d_{M}(v_3) 
            \ge |V_3\setminus U| \cdot |V_2 \setminus \{v_2\}|
            \ge 299 \varepsilon n \cdot \left(\frac{1}{3} - 2\varepsilon\right)n 
            \ge 90 \varepsilon n^2. 
        \end{align*}
        Thus, we have that
        \begin{align*}
            \max\left\{d_{M}(v_1),\,d_{M}(v_3)\right\}
            \ge \frac{d_{M}(v_1) + d_{M}(v_3)}{2} 
            \ge 45\varepsilon n^2, 
        \end{align*}
        contradicting~\eqref{equ:Bv-upper-bound}. 
    \end{proof}

    \begin{claim}\label{CLAIM:max-codeg-bad}
       For every crossing pair $v_1v_2$, we have $d_B(v_1v_2) \le 300 \varepsilon n$.  
    \end{claim}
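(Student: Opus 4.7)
The plan is to argue by contradiction, using Claim~\ref{CLAIM:M1e-M2e} to convert each bad edge through $v_1 v_2$ into a statement about missing triples through $v_1$ or $v_2$, and then to contradict the bound $|M_v|\le 40\varepsilon n^2$ from~\eqref{equ:Bv-upper-bound}.

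Suppose $d_B(v_1 v_2)>300\varepsilon n$. By the symmetry between the parts we may assume $v_1\in V_1$ and $v_2\in V_2$. Any $w$ with $\{v_1,v_2,w\}\in B$ must lie in $V_1\cup V_2$, since a third vertex in $V_3$ would make the triple transversal rather than bad. I therefore partition the bad neighbourhood as $N_B(v_1 v_2)=W_1\cup W_2$ with $W_i\subseteq V_i$. For each $w\in W_1$, the bad edge $\{v_1,w,v_2\}$ has two vertices in $V_1$ and one in $V_2$, so its two crossing pairs are $\{v_1,v_2\}$ and $\{w,v_2\}$; I will apply Claim~\ref{CLAIM:M1e-M2e} with the latter crossing pair, obtaining $d_M(wv_2)\ge (1/3-300\varepsilon)n$. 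Analogously, each $w\in W_2$ will yield $d_M(wv_1)\ge (1/3-300\varepsilon)n$.

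Next, I would observe that every triple counted by $d_M(wv_2)$ for $w\in W_1$ has the form $\{w,v_2,u\}$ with $u\in V_3$, since missing triples are transversal, and each such triple contributes to $M_{v_2}$ exactly once as $w$ ranges over $W_1$. Summing and inserting~\eqref{equ:Bv-upper-bound},
\begin{align*}
|W_1|\,\bigl(\tfrac{1}{3}-300\varepsilon\bigr)\,n \;\le\; \sum_{w\in W_1} d_M(wv_2) \;\le\; |M_{v_2}| \;\le\; 40\varepsilon n^2,
\end{align*}
so $|W_1|\le 40\varepsilon n/(1/3-300\varepsilon)<140\varepsilon n$ once $\varepsilon$ is small enough. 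The symmetric argument, using $|M_{v_1}|\le 40\varepsilon n^2$, gives $|W_2|<140\varepsilon n$, whence $d_B(v_1 v_2)=|W_1|+|W_2|<280\varepsilon n$, contradicting the assumption.

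The only subtlety will be the choice, for each bad edge through $v_1 v_2$, of the \emph{other} crossing pair (distinct from $\{v_1,v_2\}$): applying Claim~\ref{CLAIM:M1e-M2e} to $\{v_1,v_2\}$ itself would only produce the single inequality $d_M(v_1 v_2)\ge (1/3-300\varepsilon)n$, independent of how many bad edges pass through the pair. All numerical constants are comfortably slack, and the requirement that $\varepsilon$ be sufficiently small is already built into the top of the proof of Theorem~\ref{THM:exact-level-one}.
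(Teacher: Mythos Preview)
Your proof is correct and follows essentially the same approach as the paper: apply Claim~\ref{CLAIM:M1e-M2e} to the second crossing pair of each bad edge through $v_1v_2$, sum the resulting lower bounds on $d_M$, and contradict the bound $|M_v|\le 40\varepsilon n^2$ from~\eqref{equ:Bv-upper-bound}. The only cosmetic difference is that the paper pigeonholes to assume at least half of $N_B(v_1v_2)$ lies in one part and analyses only that side, whereas you bound $|W_1|$ and $|W_2|$ separately and add; both are equally valid.
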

    \begin{proof}[Proof of Claim~\ref{CLAIM:max-codeg-bad}] 
    By symmetry, assume that $v_1\in V_1$ and $v_2\in V_2$.
        Suppose to the contrary that $d_{B}(v_1v_2) > 300 \varepsilon n$. 
        By symmetry, we may assume that $N \coloneqq N_{B}(v_1v_2) \cap V_1$ has size at least $\frac{1}{2} d_{B}(v_1v_2) \ge 150 \varepsilon n$. 
        It follows from Claim~\ref{CLAIM:M1e-M2e} that 
        \begin{align*}
            d_{M}(v_2)
            \ge \sum_{v_3 \in N} d_{M}(v_2v_3)
            \ge |N| \cdot \left(\frac{1}{3} - 300\varepsilon\right)n
            \ge 150 \varepsilon n \cdot \left(\frac{1}{3} - 300\varepsilon\right)n
            > 40\varepsilon n^2, 
        \end{align*}
        contradicting~\eqref{equ:Bv-upper-bound}. 
    \end{proof}

Let $\mathcal{S}$ consists of all crossing pairs that lie inside at least one bad edge.  By Claim~\ref{CLAIM:M1e-M2e}, we have $d_{M}(uv) \ge \left(\frac{1}{3} - 100\varepsilon\right)n$ for every $uv\in \mathcal{S}$. Since every bad edge has two crossing pairs,  it follows from Claim~\ref{CLAIM:max-codeg-bad} that 
    \begin{align*}
        |B|
        \le \frac12 \sum_{uv \in \mathcal{S}} d_{B}(uv) 
         \le \frac12\cdot |\mathcal{S}| \cdot 300 \varepsilon n
        = 150 \varepsilon n\,|\mathcal{S}|.
    \end{align*}  
    On the other hand, it follows from the definition that 
    \begin{align*}
        |M|
        \ge \frac{1}{3} \sum_{uv \in \mathcal{S}} d_{M}(uv)
        \ge \frac{1}{3} \cdot |\mathcal{S}| \cdot\left(\frac{1}{3} - 100\varepsilon\right)n
        \ge \frac{n}{10}\, |\mathcal{S}|, 
    \end{align*}
    which is strictly greater than $|B|$. 

    Let the $3$-graph $\mathcal{G}$ be obtained from $\mathcal{H}$ by removing all triples from $B$ and adding all triples in $M$. 
    It is easy to see that $\mathcal{G}$ is $C_{5}^{3-}$-free, while $|\mathcal{G}| = |\mathcal{H}| + |M| - |B| > |\mathcal{H}|$, contradicting the maximality of $\mathcal{H}$. 
    Therefore, we have that $B = \emptyset$. Also, $M=\emptyset$, again by the maximality of $\mathcal{H}$.
   Thus, $\mathcal{H} \setminus \bigcup_{i\in [3]}\mathcal{H}[V_i]$ is exactly the complete 3-partite 3-graph $\mathcal{K}[V_1,V_2,V_3]$. 
    This completes the proof of Theorem~\ref{THM:exact-level-one}. 
\end{proof}

%%%%%%%%%%%%%%%%%%%%%%%%%%%%%%%%%%%%%%%%%%%%
\section{Concluding remarks}\label{SEC:remark}
Proposition~\ref{PROP:C5minus-max-degree} implies that $C_{5}^{3-}$ is \textbf{bounded}, and~\eqref{equ:smoothness-C5-} implies that $C_{5}^{3-}$ is \textbf{smooth}, where these two properties were introduced in~\cite{HLLYZ23}. These two properties lead to applications in certain tilting-type extremal problems related to $C_{5}^{3-}$, and we refer the reader to~\cite{HLLYZ23,DHLY24} for further details.

It would be very interesting to see which other problems become tractable with the \textbf{local refinement} method, which can generally be described as follows: 
\begin{description}
%[start=1,label={Step~\arabic*}]
    \item[Step 1:] \label{it:LR1}
prove that any extremal configuration $G$ of large order $n$ is not too far from a conjectured construction $C$ in some fixed measure of similarity,
\item[Step 2:]\label{it:LR2} choose an instance of $C$ that is best fit to $G$ and add manually the flag algebra versions of the local optimality conditions (namely that, that no local change to $C$ can deteriorate the similarity measure between $C$ and $G$), 
\item[Step 3:]\label{it:RL3} check if flag algebras can prove the desired asymptotic results under the extra assumptions coming from the previous two steps.
\end{description}

For example, if the conjectured asymptotically
maximum 3-graphs $\C H$ come from 3-graph $\C D(T)$ which consists triples that span a directed cycle in a tournament  $T$ (this is the case for the extremal problems studied in~\cite{GlebovKralVolec16,ReiherRodlSchacht16jctb,FalgasPikhurkoVaughanVolec23}), then the local assumptions could say that changing the orientation of one pair in $T$ cannot increase $|\C H\cap \C D(T)|$ (or decrease $|\C H\setminus \C D(T)|$, etc).

For problems with the conjectured  constructions being recursive, it makes good sense to bound in Step 3 not the global function we try to optimize but the contribution of the top level to it, as we did for the Tur\'an problem for $C_{5}^{3-}$. Indeed, our computer experiments indicate that flag algebras using flags with at most 6-vertices (in the theory of 3-graphs with an unordered vertex 3-partition) cannot prove directly $|\C H|\le (\frac14+o(1)){n\choose 3}$ under the assumptions of Proposition~\ref{pr:3}. This is as expected since such hypothetical proof would apply to the union of three vertex-disjoint copies of an arbitrary $C_{5}^{3-}$-free $\C G$, on $V_1$, $V_2$ and $V_3$, and the complete $3$-partite $3$-graph $\mathcal{K}[V_1,V_2,V_3]$, and would imply that the edge density of $\C G$ is at most $\frac14+o(1)$. This could in turn be translated into a proof of $\pi(C_{5}^{3-})\le \frac 14$ in the theory of (uncoloured) $C_{5}^{3-}$-free $3$-graphs.

\hide{
In a way, Proposition~\ref{pr:4} can also be regarded as Step 3 of this method where Step~1 did not require any computer calculations since the last step could be carried out with a very weak similarity assumption of Item~\ref{it:42} of Proposition~\ref{pr:4} (namely that the number of crossing pairs is at least $(\frac1{16}+o(1))n^2$, which is rather far from $(\frac19+o(1))n^2$ observed in the conjectured construction).
}

\hide{
A conjecture, often attributed to Mubayi--R\"{o}dl~\cite{MR02}, states that $\pi(C_{5}^{3}) = 2\sqrt{3} - 3$ (which is $0.46410...$). 
Recent work by Kam{\v c}ev--Letzter--Pokrovskiy~\cite{KLP24} shows that $\pi(C_{\ell}^{3}) = 2\sqrt{3} - 3$ for all sufficiently large $\ell$ satisfying $\ell \not\equiv 0 \pmod{3}$. 
The floating-point calculations indicate that it might be possible to prove that $\pi(\{K_{4}^{3}, C_{5}^{3}\})=2\sqrt{3} - 3$ and to improve the upper bound for $\pi(C_{5}^{3})$ to $2\sqrt{3} - 3 + 0.00053$. 
It seems plausible that, after a rounding process (which we have not yet attempted), it could be shown that $\pi(\{K_{4}^{3}, C_{5}^{3}\}) = 2\sqrt{3} - 3$. Such a result would imply that $\pi(C_{\ell}^{3}) = 2\sqrt{3} - 3$ for all $\ell \ge 8$ satisfying $\ell \not\equiv 0 \pmod{3}$, thus strengthening the result of Kam{\v c}ev--Letzter--Pokrovskiy.
Additionally, the upper bound for $\pi(C_{5}^{3})$ could likely be further refined or even determined exactly with additional effort. We hope to return to this topic in the future. }

A conjecture, often attributed to Mubayi--R\"{o}dl~\cite{MR02}, states that $\pi(C_{5}^{3}) = 2\sqrt{3} - 3$ (which is $0.46410...$). 
The recent work by Kam{\v c}ev--Letzter--Pokrovskiy~\cite{KLP24} shows that $\pi(C_{\ell}^{3}) = 2\sqrt{3} - 3$ for all sufficiently large $\ell$ satisfying $\ell \not\equiv 0 \pmod{3}$. After the arXiv version of this paper was submitted, we used the local refinement method described above to prove in~\cite{BLLP25} that $\pi(\mathcal{C})=2\sqrt{3} - 3$, where $\mathcal{C}$ is either the $7$-vertex tight cycle $C_{7}^{3}$, or the pair $\{K_{4}^{3}, C_{5}^{3}\}$, where $K_4^3$ is the complete $3$-graph on $4$ vertices.
%and also to improve the upper bound for $\pi(C_{5}^{3})$ to $2\sqrt{3} - 3 + 0.00053$. 
This result implies that $\pi(C_{\ell}^{3}) = 2\sqrt{3} - 3$ for all $\ell \ge 7$ satisfying $\ell \not\equiv 0 \pmod{3}$, thus strengthening the result of Kam{\v c}ev--Letzter--Pokrovskiy.
%We hope to return to this topic in the future.
%%%%%%%%%%%%%%%%%%%%%%%%%%%%%%%%%%%%%%%%%%%%

\section*{Acknowledgements}

Levente Bodn\'ar, Xizhi Liu and Oleg Pikhurko were supported by ERC Advanced Grant 101020255.

\bibliography{refs}
%%%%%%%%%%%%%%%%%%%%%%%%%%%%%%%%%%%%%%%%%%%
%%%%%%%%%%%%%%%%%%%%%%%%%%%%%
\end{document}